\documentclass[10pt,a4paper]{amsart}
\usepackage{amsfonts}
\usepackage{amsthm}
\usepackage{amssymb}
\usepackage{amsmath}
\usepackage{upref}
\usepackage{mathrsfs}
\usepackage{color}
\usepackage[citecolor=blue,colorlinks=true]{hyperref}
\usepackage[left=3cm,right=3cm,top=4cm,bottom=3cm]{geometry}

\usepackage{enumitem}

\newcommand{\blue}[1]{{\color{black} #1 }}

\newcommand{\del}[1]{}

\newtheorem{thm}{Theorem}[section]
\newtheorem{lemma}[thm]{Lemma}
\newtheorem{prop}[thm]{Proposition}
\newtheorem{cor}[thm]{Corollary}
\theoremstyle{definition}
\newtheorem{defin}[thm]{Definition}
\theoremstyle{remark}
\newtheorem{rem}[thm]{Remark}

\numberwithin{equation}{section}

\newcommand{\me}{\mathrm{e}}

\newcommand{\dif}{\mathrm{d}}
\newcommand{\dd}{\mathrm{d}}
\newcommand{\totdif}{\mathrm{D}}
\newcommand{\mf}{\mathscr{F}}
\newcommand{\mr}{\mathbb{R}}
\newcommand{\R}{\mathbb{R}}
\newcommand{\prst}{\mathbb{P}}
\newcommand{\stred}{\mathbb{E}}
\newcommand{\p}{\mathbb{P}}
\newcommand{\E}{\mathbb{E}}
\newcommand{\ind}{\mathbf{1}}
\newcommand{\mn}{\mathbb{N}}
\newcommand{\N}{\mathbb{N}}

\newcommand{\mz}{\mathbb{Z}}
\newcommand{\mt}{\mathbb{T}^N}
\newcommand{\T}{{\mathbb{T}^N}}

\DeclareMathOperator{\supp}{supp}

\DeclareMathOperator{\diver}{div}
\DeclareMathOperator*{\esssup}{ess\,sup}

\newcommand{\bl}{\big\langle}
\newcommand{\br}{\big\rangle}

\setlength{\parindent}{0pt}
\setlength{\parskip}{\medskipamount}

\begin{document}
 
\title[Well-posedness and regularity for quasilinear SPDE]{Well-posedness and regularity for quasilinear degenerate parabolic-hyperbolic SPDE}

\author{Benjamin Gess}
\address[B. Gess]{Max Planck Institute for Mathematics in the Sciences, Inselstra\ss e 22, 04103 Leipzig, Germany \& Fakult\"at f\"ur Mathematik, Universit\"at Bielefeld, D-33501 Bielefeld, Germany}
\email{bgess@mis.mpg.de}

\author{Martina Hofmanov\'a}
\address[M. Hofmanov\'a]{Technical University Berlin, Institute of Mathematics, Stra\ss e des 17. Juni 136, 10623 Berlin, Germany}
\email{hofmanov@math.tu-berlin.de}

\begin{abstract}
We study quasilinear degenerate parabolic-hyperbolic stochastic partial differential equations with general multiplicative noise within the framework of kinetic solutions. Our results are twofold: First, we establish new regularity results based on averaging techniques. Second, we prove the existence and uniqueness of solutions in a full $L^1$ setting requiring no growth assumptions on the nonlinearities. In addition, we prove a comparison result and an $L^1$-contraction property for the solutions, \blue{ generalizing the results obtained in \cite{dehovo}.}
\end{abstract}

\subjclass[2010]{60H15, 35R60}
\keywords{quasilinear degenerate parabolic stochastic partial differential equation, kinetic formulation, kinetic solution, velocity averaging lemmas, renormalized solutions}

\date{\today}

\maketitle

\section{Introduction}

We study the regularity and well-posedness of quasilinear degenerate parabolic-hyperbolic SPDE of the form
\begin{equation}\label{eq}
\begin{split}
\dif u+\diver(B(u))\dif t&=\diver(A(u)\nabla u )\dif t+\varPhi(x,u)\dif W, \qquad x\in\mt,\,t\in(0,T),\\
u(0)&=u_0,
\end{split}
\end{equation}
where $W$ is a cylindrical Wiener process, $u_0 \in L^1(\T)$, $B \in C^2(\R,\R^N)$, $A \in C^1(\R,\R^{N\times N})$ takes values in the set of symmetric non-negative definite matrices and $\Phi(x,u)$ are Lipschitz continuous diffusion coefficients.

Equations of this form arise in a wide range of applications including the convection-diffusion of an ideal fluid in porous media. The addition of a stochastic noise is often used to account for numerical, empirical or physical uncertainties. In view these applications, we aim to treat \eqref{eq} under general assumptions on the coefficient $A,B$ and initial data $u_0$. In particular, the coefficients are not necessarily linear nor of linear growth and $A$ is not necessarily strictly elliptic. Hence, in particular, we include stochastic scalar conservation laws 
$$
\dif u+\diver(B(u))\dif t=\varPhi(x,u)\dif W
$$
and stochastic porous media equations 
$$
\dif u+\diver(B(u))\dif t= \Delta u^{[m]}\dif t+\varPhi(x,u)\dif W,
$$
with {$m > 2$} and \blue{$u^{[m]}:=sgn(u)|u|^m$}.

One of the main points of this paper is to provide a full $L^1$ approach to \eqref{eq}. That is, we prove regularity estimates and well-posedness for \eqref{eq} assuming no higher moments. More precisely, only $u_0 \in L^1(\T)$ and {\em no} growth assumptions on the nonlinearities $A, B$ are assumed, \textcolor{black}{ in contrast to the previous work \cite{dehovo}}. In particular, no Lipschitz continuity (and thus linear growth) assumptions on $A, B$ are supposed. This causes severe difficulties: Firstly, the weak form of \eqref{eq} is not necessarily well-defined since $A(u), B(u)$ are not necessarily in $L^1_{loc}(\T)$ for $u\in L^1(\T)$. Therefore, renormalized solutions have to be considered (cf.\ \cite{diperna2, wittbold2,wittbold3}). Secondly, in order to prove the uniqueness of $L^1$ entropy solutions an equi-integrability condition or, equivalently, a decay condition for the entropy defect measure is required (see a more detailed discussion below). The usual decay condition used in the deterministic case is not applicable in the stochastic case and a new condition and proof has to be found. Thirdly, in the stochastic case, the usual proof of existence of entropy solutions relying on the Crandall-Liggett theory of $m$-accretive operators in $L^1(\T)$ cannot be applied (cf.\ \cite{CL71,C72,car}). Instead, the construction of entropy solutions presented in this paper relies on new regularity estimates based on averaging techniques. The application of averaging techniques and the resulting regularity results are new for parabolic-hyperbolic SPDE of the type \eqref{eq}.  \\
 On the other hand, $L^1(\T)$ is a natural space to consider the well-posedness for SPDE of the type \eqref{eq} since the operators $\diver(B(\cdot)),\  \diver(A(\cdot)\nabla \cdot)$ are accretive in $L^1(\T)$ (cf.\ the discussion of the $e$-property below). In addition, and in contrast to the deterministic case, restricting to bounded solutions and hence, by localization, to Lipschitz continuous coefficients $A,B$ in \eqref{eq} does not seem to be sensible in the stochastic case, since in general no uniform $L^\infty$ bound will be satisfied by solutions to \eqref{eq}, due to the unboundedness of the driving noise $W$. \\
{\textcolor{black}{In the previous work \cite{dehovo}, SPDE of the type \eqref{eq} have been considered via a kinetic approach under more restrictive assumptions. More precisely, high moment bounds $u_0\in \bigcap_{p\ge 1} L^p(\Omega;L^p(\T))$, boundedness of the diffusion matrix $A$ and polynomial growth of $B''$ had to be assumed, in particular ruling out application to porous media equations. Due to these more restrictive assumptions all of the above mentioned difficulties do not appear in \cite{dehovo}. While we follow the principle setup to prove uniqueness of kinetic solutions, the proof has to be significantly extended in order to incorporate the necessity to work with renormalized solutions and the above mentioned weaker decay condition of the entropy defect measure. The essential difficulty in the proof of existence of solutions is the derivation of uniform estimates in some (fractional) Sobolev space. The respective arguments of  \cite{dehovo} do not apply in the more general setting considered here. We therefore take a different route by adapting the (deterministic) averaging techniques by Tadmor and Tao \cite{tadmor} to SPDE, which is entirely new. 
}}

As a particular example, \eqref{eq} contains stochastic porous media equations
\begin{equation}\label{eq:intro-PME}
\dif u=\Delta u^{[m]}\dif t+\varPhi(x,u)\dif W, \quad\text{with }{m > 2}.
\end{equation}
Stochastic porous media equations have attracted a lot of interest in recent years (cf.\ e.g.\ \cite{RRW07, PR07, BDPR08, RW08} and the references therein). All of these results rely on an $H^{-1}$ approach, that is, on treating $\Delta (\cdot)^{[m]}$ as a monotone operator in $H^{-1}$. In contrast to the deterministic case, an $L^1$ approach to stochastic porous media equations had not yet been developed, since an analog of the concept of mild solutions in the Crandall-Liggett theory of $m$-accretive operators (cf.\ \cite{V07,car}) could not be found. However, the $L^1$ framework offers several advantages: Firstly, more general classes of SPDE may be treated, secondly, contractive properties in $L^1$ norm are sometimes better than those in $H^{-1}$ norm. We next address these points in more detail.\\ 
Concerning the class of SPDE, informally speaking, the $H^{-1}$ approach relies on applying $(-\Delta)^{-1}$ to \eqref{eq:intro-PME} which then allows to use the monotonicity of $\phi(u):=u^{[m]}$ in order to prove the uniqueness of solutions. While this works well for the operator $\Delta \phi(\cdot)$, the reader may easily check that this approach fails in the presence of hyperbolic terms $\diver B(u)$ as in \eqref{eq} and can only be applied to reaction diffusion equations
  \begin{equation}\label{eq:intro-PME-2}
  \dif u=\Delta u^{[m]}\dif t+f(u)dt+\varPhi(x,u)\dif W, \quad\text{with }{m > 2}.
  \end{equation}
under unnecessarily strong assumptions on the reaction term $f$ (cf.\ e.g.\ \cite{DPRRW06,RRW07} where \eqref{eq:intro-PME-2} with $f$ satisfying rather restrictive assumptions has been considered). Roughly speaking, the problem is that the Nemytskii operator $u\mapsto f(u)$ is not necessarily monotone in $H^{-1}$ even if $f$ is a monotone function. This changes drastically in the $L^1$ setting, since both $u\mapsto \diver B(u)$ and $u\mapsto f(u)$ are accretive operators on $L^1$ under relatively mild assumptions. In this paper, we resolve these issues by establishing a full $L^1$ approach to \eqref{eq} based on entropy/kinetic methods. In particular, this extends available results on stochastic porous media equations by allowing hyperbolic terms $\diver B(u)$ and our framework immediately\footnote{We choose not to include the details on the treatment of reaction terms $f(u)$ in this paper, since their treatment is similar to the noise terms $\Phi(u)dW$.} extends to reaction terms $u \mapsto f(u)$ assuming only that $f$ is weakly monotone and $C^2$.

We proceed by stating the main well-posedness result obtained in this paper, see Theorem \ref{thm:uniqueness}, Theorem \ref{thm:ex} below. The precise framework will be given in Section \ref{sec:prelim} below and for specific examples see Section \ref{sec:examples}.

\begin{thm}\label{thm:main}
Let $u_0\in L^1(\mt)$ and assume that $A^\frac{1}{2}$ is $\gamma$-H\"older continuous for some $\gamma > \frac{1}{2}$. 
Then, kinetic solutions to \eqref{eq} are unique. Moreover, if $u_1,\,u_2$ are kinetic solutions to \eqref{eq} with initial data $u_{1,0}$ and $u_{2,0}$, respectively, then 
   \begin{equation*}
   \esssup_{t\in[0,T]}\stred\|(u_1(t)-u_2(t))^+\|_{L^1(\mt)}\leq \|(u_{1,0}-u_{2,0})^+\|_{L^1(\mt)}.
   \end{equation*}
Assume  in addition that $A,B$ satisfy a non-degeneracy assumption (cf.\ \eqref{eq:non-deg-local} below).
Then there exists a unique kinetic solution $u$ to \eqref{eq} satisfying $u\in C([0,T];L^1(\T))$, $\p$-a.s., and for all $p,q\in[1,\infty)$ there exists a constant $C>0$ such that
\begin{equation*}
   \E \esssup_{t\in [0,T]}\|u(t)\|_{L^p}^{pq} \le C (1+\|u_0\|_{L^p}^{pq}).
\end{equation*}
\end{thm}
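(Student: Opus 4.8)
The proof goes through the kinetic formulation of \eqref{eq}. To a kinetic solution $u$ one associates the kinetic function $f=\mathbf{1}_{u>\xi}$, which solves a linear kinetic equation of the form
\begin{equation*}
\dif f + B'(\xi)\cdot\nabla_x f\,\dif t - \diver_x\!\big(A(\xi)\nabla_x f\big)\,\dif t = \delta_{u=\xi}\,\varPhi\,\dif W + \partial_\xi\!\Big(m+\tfrac12|\varPhi|^2\delta_{u=\xi}\Big)\dif t ,
\end{equation*}
where $m=n_1+n_2\ge0$ is a kinetic measure, $n_1$ the parabolic dissipation measure carrying $|A^{1/2}(u)\nabla u|^2$ and $n_2$ a nonnegative defect measure. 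For the first assertion I would prove uniqueness and the $L^1$-contraction simultaneously by Kruzhkov's doubling of variables adapted to this setting as in \cite{dehovo}: given two kinetic solutions $u_1,u_2$ with kinetic functions $f_1,f_2$, mollify in $x$ and in $\xi$, apply It\^o's product rule to
\begin{equation*}
\int_{\T\times\T}\!\int_{\R\times\R} f_1(x,\xi)\,\overline{f_2}(y,\zeta)\,\rho_\varepsilon(x-y)\,\psi_\delta(\xi-\zeta)\,\dd x\,\dd y\,\dd\xi\,\dd\zeta,\qquad \overline{f_2}:=1-f_2 ,
\end{equation*}
and take expectations so that the stochastic integrals drop out. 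The transport terms integrate to zero on the torus, the noise contributions collapse to one term controlled by the Lipschitz property of $\varPhi$, the defect measures enter with the favourable sign, and the delicate point is that the cross-contributions of $n_1^1$, $n_1^2$ and of $\diver_x(A\nabla_x f)$ must combine into a nonpositive quantity up to an error vanishing as $\delta\to0$; this is exactly where the hypothesis that $A^{1/2}$ is $\gamma$-H\"older with $\gamma>\tfrac12$ is used, as it controls the chain-rule commutators by a power $|u_1-u_2|^{2\gamma}$ against the $L^2$-dissipation. Sending $\delta\to0$ then $\varepsilon\to0$, integrating in $\xi$, and using $\int_\R(f_1\overline{f_2}+\overline{f_1}f_2)\dd\xi=|u_1-u_2|$ gives the $L^1$-contraction; the one-sided version with $(\cdot)^+$ in place of $|\cdot|$ gives the stated comparison estimate, and $u_{1,0}=u_{2,0}$ gives uniqueness. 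Because no growth assumption is imposed on $A,B$, this has to be performed for \emph{renormalized kinetic solutions} — the coefficients are truncated at $|\xi|\sim R$ — and, in order to discard the truncation error as $R\to\infty$, one needs a decay condition on the defect measure replacing the deterministic equi-integrability, of the form $\E\,n_2(\T\times[0,T]\times\{R\le|\xi|\le R+1\})\to0$; building in and verifying this weaker condition is one of the two main difficulties.

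For existence I would use an approximation scheme: add a nondegenerate viscosity $\varepsilon\Delta u$, truncate $A,B,\varPhi$ to be globally Lipschitz, and truncate $u_0$ into $\bigcap_{p\ge1}L^p(\T)$; the regularized equations are well-posed, say by the theory of \cite{dehovo}, yielding approximations $u^\varepsilon$. Uniform estimates come from It\^o's formula applied to $\|u^\varepsilon(t)\|_{L^p}^{pq}$: $\diver B(u^\varepsilon)$ integrates to zero on $\T$, the terms $\diver(A(u^\varepsilon)\nabla u^\varepsilon)$ and $\varepsilon\Delta u^\varepsilon$ are nonpositive after integration by parts since $A\ge0$, and the It\^o correction is $\le C(1+\|u^\varepsilon\|_{L^p}^{pq})$ by the Lipschitz growth of $\varPhi$; together with the Burkholder--Davis--Gundy inequality and Gronwall's lemma this gives $\E\,\esssup_{t\in[0,T]}\|u^\varepsilon(t)\|_{L^p}^{pq}\le C(1+\|u_0\|_{L^p}^{pq})$ uniformly in $\varepsilon$. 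The genuinely new — and, I expect, hardest — step is to adapt the velocity-averaging technique of Tadmor--Tao \cite{tadmor} to the kinetic equation for $\mathbf{1}_{u^\varepsilon>\xi}$ and, using the non-degeneracy assumption \eqref{eq:non-deg-local}, to derive a uniform bound for $u^\varepsilon$ in a fractional Sobolev space $W^{\sigma,1}$ in $x$ (with matching time regularity), compactly embedded in $L^1$. Carrying the associated Fourier-multiplier estimates through a stochastic, $L^1$ and renormalized framework — localizing in $\xi$, controlling the kinetic measure $m^\varepsilon$ only via the energy estimate, and handling the stochastic integral term — is the heart of the argument and the second main difficulty.

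Finally, the uniform $L^p$ bounds together with the fractional regularity yield tightness of the laws of $(u^\varepsilon,W)$; by the Jakubowski--Skorokhod representation theorem one obtains a.s.-convergent copies on a new probability space and passes to the limit in the renormalized kinetic formulation, identifying $B(u)$, $A(u)$ and the limiting defect measure (with the required decay) from the compactness and obtaining a martingale kinetic solution that satisfies the same $L^p$ estimates by lower semicontinuity. Combined with the pathwise uniqueness from the first part, the Gy\"ongy--Krylov criterion upgrades this to a probabilistically strong solution on the original stochastic basis, which is then the unique kinetic solution. The continuity $u\in C([0,T];L^1(\T))$, $\p$-a.s., is read off from the kinetic formulation — $t\mapsto f(t)$ has a representative continuous into the appropriate space — and, together with the $L^1$-contraction and the strong $L^1$-continuity of the approximations, upgrades to strong continuity of $u$ itself.
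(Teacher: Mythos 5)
Your overall strategy (doubling of variables for uniqueness, $L^p$ bounds via It\^o, an averaging lemma for regularity, an approximation/limit argument for existence) matches the paper in broad outline, but there is one genuine gap and one notable divergence in route.

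\textbf{The decay condition on the kinetic measure is the wrong one.} You propose requiring $\E\, n_2\bigl([0,T]\times\T\times\{R\le|\xi|\le R+1\}\bigr)\to 0$, which is a direct analogue of the deterministic equi-integrability condition. The paper explicitly argues that this type of condition is \emph{not} attainable in the stochastic $L^1$ setting because the multiplicative noise term $\Phi(x,u)\,dW$ prevents the usual derivation. Instead, the paper works with the strictly weaker dyadic, weighted condition $\lim_{\ell\to\infty}\frac{1}{2^\ell}\,\E\, m\bigl([0,T]\times\T\times\{2^\ell\le|\xi|\le2^{\ell+1}\}\bigr)=0$. The factor $1/2^\ell$ is not cosmetic: in Proposition \ref{prop:decay1}, the proof proceeds by an iteration based on the test function $\theta_k=\frac1k\,\ind_{k\le u\le 2k}$, and the $1/k$ normalization is what makes the It\^o correction term $\frac12 G^2\theta_k$ small (via the factor $\alpha<1$ in \eqref{alpha}) so that Gronwall and the iteration close. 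Dropping this factor, as your condition implicitly does, would reproduce exactly the difficulty the paper says breaks down with multiplicative noise. Moreover, the uniqueness proof is built around this weaker condition: the cutoff $K_\ell$ introduced in the doubling argument has $|K'_\ell|\le\frac1{2^\ell}\ind_{2^\ell\le|\xi|\le2^{\ell+1}}$, so that the error terms contributed by $K_\ell'$ are precisely of the form $\frac1{2^\ell}\E\,m(A_{2^\ell})$. So the decay condition and the choice of cutoff go hand in hand; with your condition you'd need a different cutoff and, more importantly, a proof of the condition that doesn't seem to be available.

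\textbf{The existence argument takes a different, and perhaps riskier, route.} You suggest a one-parameter viscous/truncated approximation and then tightness plus Jakubowski--Skorokhod plus Gy\"ongy--Krylov at the final vanishing-viscosity step. The paper uses a two-parameter scheme $(\kappa,\tau)$ and, crucially, reserves the Skorokhod/martingale-solution method for the truncation limit $\tau\to0$ only (where the $\kappa$-viscosity gives an $H^1_x$ bound to drive compactness). The vanishing-viscosity limit $\kappa\to0$ is handled by showing $(u^\kappa)$ is \emph{Cauchy in} $L^1(\Omega\times[0,T]\times\T)$ on the original stochastic basis, using the doubling estimate together with the averaging regularity (Theorem \ref{thm:regularity}) and the decay of the kinetic measure. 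The reason the paper does it this way is that the chain-rule condition (ii) of Definition \ref{kinsolL1} is not stable under weak limits, so strong $L^1$ convergence must be produced directly; the Cauchy argument achieves this without detouring through a new probability space. A tightness/Skorokhod/Gy\"ongy--Krylov argument based on your $W^{s,1}_x$ estimate could plausibly be made to work (a.s.\ $L^1$-convergence of Skorokhod copies is also strong enough to pass to the limit in the chain rule), but this is additional machinery the paper deliberately avoids, and you would still need the decay of the kinetic measure (in the paper's weaker form) to identify the limiting measure and run the uniqueness needed for Gy\"ongy--Krylov.

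The description of the uniqueness computation itself (It\^o product rule on the doubled quantity, noise terms collapsing to a term controlled by the Lipschitz property of $\Phi$, the H\"older exponent $\gamma>\tfrac12$ controlling the $A^{1/2}$-commutators, the choice $\delta\sim\varepsilon^\beta$ with $\beta\in(1/\gamma,2)$) and of the $L^p$ estimate matches the paper.
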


The second direction of advantages of the $L^1$ approach lies in dynamical properties. A natural question for stochastic porous media equations is their long-time behavior, that is, the existence and uniqueness of invariant measures, mixing properties etc. If $u$, $v$ are two solutions to \eqref{eq:intro-PME} with initial conditions $u_0,v_0$ respectively, then 
\begin{equation}\label{eq:contraction_in_h-1}
   \E \|u(t)-v(t)\|_{H^{-1}} \le e^{Ct} \|u_0-v_0\|_{H^{-1}} \quad \forall t\ge 0,
\end{equation}
for some constant $C>0$. The constant $C$ corresponds to the Lipschitz norm of $u\mapsto \Phi(u)$ as a map from $H^{-1}$ to $L_2(U;H^{-1})$. In particular, the dynamics induced by \eqref{eq:intro-PME}, in general, will not be non-expanding in $H^{-1}$. In contrast, we show that 
  $$\E \|u(t)-v(t)\|_{L^1} \le \|u_0-v_0\|_{L^1}\quad\forall t\ge 0,$$
that is, in the $L^1$ setting we can choose the constant $C$ in \eqref{eq:contraction_in_h-1} to be zero. In particular, this implies the $e$-property (cf.\ \cite{KPS10}) for the associated Markovian semigroup $P_t f(x) := \E f(u_t^x)$ on $L^1(\T)$. The $e$-property has proven vital in the proof of existence and uniqueness of invariant measures for SPDE with degenerate noise (cf.\  \cite{GT15,GT14,ESR12,KPS10}).

For $x\in L^1(\T)$ let
  $$ P_x := (u^x_\cdot)_*\p,$$
that is, $P_x$ is the law of $u^x_\cdot$ on $C([0,\infty);L^1(\T))$, where $u^x_\cdot$ denotes the kinetic solution to \eqref{eq} with initial condition $x$. We equip $C([0,\infty);L^1(\T))$ with the canonical filtration $\mathcal{G}_t$ and evaluation maps $\pi_t(w):=w(t)$ for $w\in C([0,\infty);L^1(\T))$, $t\ge0$.
As in \cite{daprato}, using Theorem \ref{thm:main}, we obtain
\begin{cor} 
  The family $\{P_x\}_{x\in L^1}$ is a time-homogeneous Markov process on $C([0,\infty);L^1(\T))$ with respect to $\mathcal{G}_t$, i.e.
  \begin{equation*}
	 \E_x(F(\pi_{t+s})|\mathcal{G}_s)=\E_{\pi_s}(F(\pi_t))\quad P_x\text{-a.s.}
  \end{equation*}
  In addition, $\{P_x\}_{x\in L^1(\T)}$ is Feller and satisfies the $e$-property (cf.\ \cite{KPS10}).
\end{cor}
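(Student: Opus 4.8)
The plan is to deduce the Markov, Feller, and $e$-properties from pathwise uniqueness together with the $L^1$-contraction, following the classical scheme of \cite{daprato}. Two consequences of Theorem \ref{thm:main} drive the argument. First, for every deterministic $x\in L^1(\T)$ there is a unique kinetic solution $u^x_\cdot$ with $u^x_\cdot\in C([0,T];L^1(\T))$, $\p$-a.s.; solving successively on $[0,T],[T,2T],\dots$ and gluing (consistently, by uniqueness) produces a global solution in $C([0,\infty);L^1(\T))$. Second, applying the one-sided contraction of Theorem \ref{thm:main} to the pairs $(u^x,u^y)$ and $(u^y,u^x)$, and upgrading the estimate from a.e.\ $t$ to every $t$ via Fatou's lemma along time sequences together with path-continuity, one obtains the two-sided bound
\[
  \E\|u^x_t-u^y_t\|_{L^1(\T)}\le\|x-y\|_{L^1(\T)}\qquad\text{for all }t\ge0,\ x,y\in L^1(\T).
\]

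\emph{Markov property.} The key technical step is to realize the kinetic solution as a measurable stochastic flow. By a Yamada--Watanabe type argument — pathwise uniqueness plus existence of martingale solutions — there is a jointly measurable map $\Psi\colon L^1(\T)\times\Omega_0\to C([0,\infty);L^1(\T))$ on the canonical Wiener space $\Omega_0$ with $u^x_\cdot=\Psi(x,W)$, $\p$-a.s., for each $x$. Fix $s\ge0$ and set $\tilde W_\cdot:=W_{s+\cdot}-W_s$, a cylindrical Wiener process independent of $\mathcal F_s$. Since $u^x$ solves \eqref{eq} on $[s,\infty)$ with initial value $u^x_s$ driven by $\tilde W$, pathwise uniqueness gives the cocycle identity $u^x_{s+\cdot}=\Psi(u^x_s,\tilde W)$, $\p$-a.s. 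As $u^x_s$ is $\mathcal F_s$-measurable and $\tilde W$ is independent of $\mathcal F_s$, the freezing lemma yields, for bounded measurable $F$ on $C([0,\infty);L^1(\T))$,
\[
  \E\big[F(u^x_{s+\cdot})\mid\mathcal F_s\big]=\Big(\E\,F\big(\Psi(y,\tilde W)\big)\Big)\Big|_{y=u^x_s}=\E_{u^x_s}\big[F(\pi_\cdot)\big]\qquad\p\text{-a.s.}
\]
Transporting this identity to path space under $u^x_\cdot$ and using that $\mathcal G_s$ is the canonical filtration gives $\E_x(F(\pi_{t+s})\mid\mathcal G_s)=\E_{\pi_s}(F(\pi_t))$, $P_x$-a.s., while time-homogeneity is built into the construction of $\Psi$; the required measurability of $x\mapsto P_x$ follows from its weak continuity, a consequence of the contraction above and a Kolmogorov-type tightness bound supplied by the moment estimates of Theorem \ref{thm:main}.

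\emph{Feller property.} Boundedness of $P_tf$ for $f\in C_b(L^1(\T))$ is immediate. If $x_n\to x$ in $L^1(\T)$, the contraction gives $\E\|u^{x_n}_t-u^x_t\|_{L^1(\T)}\le\|x_n-x\|_{L^1(\T)}\to0$, so $u^{x_n}_t\to u^x_t$ in probability in $L^1(\T)$; passing to a subsequence and applying bounded convergence, $P_tf(x_n)\to P_tf(x)$, and since every subsequence admits such a further subsequence, $P_tf\in C_b(L^1(\T))$.

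\emph{$e$-property and main obstacle.} For bounded Lipschitz $f$, the two-sided contraction gives $|P_tf(x)-P_tf(y)|\le\|f\|_{\mathrm{Lip}}\,\E\|u^x_t-u^y_t\|_{L^1(\T)}\le\|f\|_{\mathrm{Lip}}\,\|x-y\|_{L^1(\T)}$ \emph{uniformly in $t\ge0$}, so $\{P_tf:t\ge0\}$ is equicontinuous at every point, which is exactly the $e$-property of \cite{KPS10}. The only genuinely delicate ingredient is the construction of the measurable solution map $\Psi$ and the verification of the cocycle identity for kinetic solutions: one must check that the pathwise uniqueness of Theorem \ref{thm:main}, established for deterministic initial data, extends to $\mathcal F_s$-measurable random initial data and time-shifted noise, and that the kinetic solution concept is stable under the Yamada--Watanabe construction. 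Granting this, the Markov property follows from independence of increments, the Feller property from the $L^1$-continuous dependence, and the $e$-property from the uniform-in-time contraction.
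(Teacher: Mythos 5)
Your proof follows the same route the paper implicitly invokes via Da Prato--Zabczyk: a measurable solution map obtained from pathwise uniqueness, a cocycle identity and the freezing lemma for the Markov property, the two-sided $L^1$-contraction deduced from Theorem~\ref{thm:main} for the Feller property, and the uniform-in-time contraction for the $e$-property. The paper supplies no explicit proof for this corollary, and your proposal is a careful and correct execution of precisely the standard argument it references, including the appropriate caution that the Yamada--Watanabe/flow construction must be checked to carry over to kinetic solutions with random initial data.
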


As mentioned above, we prove new regularity estimates for kinetic solutions to \eqref{eq} of the type
 $$u(t) \in W^{\alpha,1}(\T)\qquad \text{for a.e.}\;\; (\omega,t),$$
for some $\alpha > 0$, based on stochastic velocity averaging lemmas. \blue{In particuar, these estimates provide a smoothing property with respect to the initial data, which typically is the first step towards the construction of an invariant measure.} 

Even in the case of pure stochastic porous media equations \eqref{eq:intro-PME} this extends previously available regularity results. For related deterministic results, see \cite{bouchut, diperna, jabin, tadmor}, for stochastic hyperbolic conservation laws see \cite{debus2}. Our approach is mainly based on \cite{tadmor}, but substantial difficulties due to the stochastic integral have to be overcome. Indeed, in most of the deterministic results (with the notable exception of \cite{bouchut}), the time variable does not play a special role and is regarded as another space variable and, in particular, space-time Fourier transforms are employed in the proofs. This changes in the stochastic case due to the irregularity of the noise in time. Therefore, it was argued in \cite{debus2} that these methods are not suitable for the stochastic case and instead the approach of \cite{bouchut} which does not rely on Fourier transforms in time was employed. 
In the present paper, we rely on different arguments: We put forward averaging lemmas that rely on space-time Fourier transforms, Littlewood-Paley decomposition and a careful analysis of each of the appearing terms. As a consequence, we are able to estimate the stochastic integral as well as the kinetic measure term directly by averaging techniques, without any additional damping (as compared to \cite{bouchut,debus2}). Moreover, our averaging lemmas apply to the case of nonhomogeneous equations, that is, PDEs with zero, first and second order terms and {\em multiplicative noise}.

More precisely, as a corollary of our main regularity result for \eqref{eq}, see Theorem \ref{thm:regularity} and Corollary \ref{cor:regularity} below, we obtain
\begin{thm}\label{thm:into_regularity} Assume that $A,B$ satisfy a non-degeneracy condition (cf.\ \eqref{eq:non-deg-local} below) and are of polynomial growth of order $p$. Let $u$ be a kinetic solution to \eqref{eq}. Then
   \begin{align*}
      \E\|u\|_{L^{1}([0,T]; W^{s,1}(\T))}
      &\lesssim
         \|u_0 \|_{L^{2p+3}_{x}}^{2p+3} + 1,
      \end{align*}
    for some $s>0$.
\end{thm}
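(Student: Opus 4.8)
The plan is to read the estimate off from the stochastic velocity averaging lemma of Theorem~\ref{thm:regularity} (and Corollary~\ref{cor:regularity}), applied to the kinetic formulation of \eqref{eq}; the work then lies entirely in checking the hypotheses of that lemma and in tracking the dependence of the data on $\|u_0\|_{L^{2p+3}(\T)}$. Recall (cf.\ Section~\ref{sec:prelim}) that a kinetic solution $u$ produces the kinetic function $\chi(t,x,\xi)=\ind_{0<\xi<u(t,x)}-\ind_{u(t,x)<\xi<0}$, which solves, in the distributional sense on $(0,T)\times\T\times\R$, a kinetic equation of the form
\begin{equation*}
\dd\chi+B'(\xi)\cdot\nabla_x\chi\,\dd t-\diver_x\!\big(A(\xi)\nabla_x\chi\big)\,\dd t
=\partial_\xi\,\dd m-\tfrac12\,\partial_\xi\big(G^2\,\delta_{u=\xi}\big)\,\dd t-\delta_{u=\xi}\,\varPhi\,\dd W,
\end{equation*}
where $m$ is the (nonnegative) kinetic measure, which also records the parabolic dissipation, and $G^2(x,\xi)=\sum_k|\varPhi(x,\xi)e_k|^2$. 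Since $u=\int_{\R}\chi\,\dd\xi$ is the first velocity average of $\chi$, Theorem~\ref{thm:regularity} — which already incorporates the delicate analysis of the stochastic-integral term via the space--time Fourier transform and a Littlewood--Paley decomposition — applies once the right-hand side is estimated; the new content here is to supply those estimates with the stated dependence on the data.

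The needed a priori bounds come from the entropy estimates for kinetic solutions together with the moment bound of Theorem~\ref{thm:main}. First, testing the kinetic equation against $\xi$ (equivalently, the chain rule for $\tfrac12u^2$), using $G^2(x,\xi)\lesssim 1+|\xi|^2$, and a Gronwall argument yields
\begin{equation*}
\E\,m\big([0,T]\times\T\times\R\big)+\E\int_0^T\!\!\int_{\T}G^2(x,u)\,\dd x\,\dd t\;\lesssim\;\|u_0\|_{L^2(\T)}^2+1 .
\end{equation*}
Testing instead against a convex entropy with $\eta''(\xi)=(1+|\xi|)^{j}$ and invoking the moment bound $\E\sup_{t\in[0,T]}\|u(t)\|_{L^{j+2}(\T)}^{j+2}\lesssim\|u_0\|_{L^{j+2}(\T)}^{j+2}+1$ gives the velocity-weighted estimates $\E\int_{[0,T]\times\T\times\R}(1+|\xi|)^{j}\,\dd m\lesssim\|u_0\|_{L^{j+2}(\T)}^{j+2}+1$ and $\E\int_0^T\!\!\int_{\T}(1+|u|)^{j}G^2(x,u)\,\dd x\,\dd t\lesssim\|u_0\|_{L^{j+2}(\T)}^{j+2}+1$; moreover, since $\int_{\R}(1+|\xi|)^{j-1}|\chi|\,\dd\xi\lesssim 1+|u|^{j}$, one also has $\E\int_0^T\!\!\int_{\T\times\R}(1+|\xi|)^{j-1}|\chi|\,\dd\xi\,\dd x\,\dd t\lesssim\|u_0\|_{L^{j}(\T)}^{j}+1$, for every $j\ge1$. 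Because $A,B$ (hence $B',A$) grow at most polynomially of order $\sim p$, the non-degeneracy condition \eqref{eq:non-deg-local} is only locally uniform in $\xi$, so I would decompose $\chi$ dyadically in the velocity variable, $\chi=\sum_{\ell\ge0}\chi_{\ell}$ with $\chi_{\ell}$ supported in $\{|\xi|\sim2^{\ell}\}$, and apply Theorem~\ref{thm:regularity}/Corollary~\ref{cor:regularity} on each block; the resulting constant deteriorates like $2^{\ell\theta}$, where $\theta=\theta(p)$ is read off from the dependence of the averaging estimate on $\sup_{|\xi|\le R}(|B'(\xi)|+|A(\xi)|)\sim R^{p}$ — roughly $2p$ from the symbol entering through its square and one further power from the velocity integration. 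Summing over $\ell$ turns the estimate into a bound on $\E\|u\|_{L^1([0,T];W^{s,1}(\T))}$, for a fixed gain $s>0$, by velocity-weighted norms of $m$, of $G^2\delta_{u=\xi}$ and of $\chi$ of weight $\theta$, and by the displays above these are $\lesssim\|u_0\|_{L^{\theta+2}(\T)}^{\theta+2}+1$; one computes $\theta=2p+1$, which is exactly the exponent $2p+3$.

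The main obstacle will be precisely this interaction between the unbounded velocity variable and the merely polynomial (not linear) growth of $A,B$: one must localize in $\xi$, quantify sharply how the constants in Theorem~\ref{thm:regularity} degrade with the localization level, and recover summability of the dyadic pieces from the weighted entropy and moment bounds — this balance is what fixes the integrability exponent $2p+3$, and it is the point at which the linear-growth shortcuts of \cite{dehovo} break down. A secondary, routine point is that $\E\|u\|_{L^1([0,T];W^{s,1}(\T))}$ must first be known to be finite; I would obtain this by running the whole argument on the non-degenerate approximations of \eqref{eq} used in the construction of solutions (cf.\ Theorem~\ref{thm:ex}), for which $u$ is smooth, deriving the bound uniformly in the approximation parameters, and passing to the limit by the lower semicontinuity of $\|\cdot\|_{L^1([0,T];W^{s,1}(\T))}$ under the $L^1([0,T]\times\T)$ convergence established there. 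Granting these two points, the claimed estimate follows by collecting the bounds above.
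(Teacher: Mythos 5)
Your overall strategy — read the estimate off the stochastic averaging lemma of Theorem~\ref{thm:regularity}, then control the right-hand side of \eqref{eq:loc_regularity} by the $L^p$ moment bounds (Proposition~\ref{prop:Lp}) and the entropy/kinetic-measure estimate (Lemma~\ref{lem:energy_bounds}) — matches the paper's plan. But the route you take through Theorem~\ref{thm:regularity} is genuinely different, and it is more complicated than necessary. You propose a dyadic decomposition of $\chi$ in the velocity variable, applying Theorem~\ref{thm:regularity} on each block with a compactly supported localization $\eta_\ell$ (and $\vartheta\equiv 1$), tracking how the constants degrade like a power of $2^\ell$, and then summing. The paper avoids this entirely: the weight $\vartheta$ appearing in the non-degeneracy condition \eqref{eq:non-deg-local} and in the estimate \eqref{eq:loc_regularity} is designed precisely to absorb the polynomial growth of $A', B'$, so that \eqref{eq:non-deg-local} holds globally in $\xi$ with $\eta\equiv1$ and $\vartheta$ a polynomial of order $p$. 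One can then take $\eta\equiv1$ in Theorem~\ref{thm:regularity} directly — this is Corollary~\ref{cor:regularity} — and the exponent $2p+3$ arises immediately because $\varTheta_\eta'=(|\xi|^2+1)\vartheta^2$ is a polynomial of degree $2p+2$, so $\varTheta_\eta$ has degree $2p+3$, and Lemma~\ref{lem:energy_bounds} together with Proposition~\ref{prop:Lp} bound $\|\varTheta_\eta(|u|)\|_{L^1_{\omega,t,x}}^{1/2}$ and $\|m\vartheta\|_{L^1_\omega\mathcal{M}_{t,x,\xi}}$ by $\|u_0\|_{L^{2p+3}}^{(2p+3)/2}+1$ and $\|u_0\|_{L^{p+2}}^{p+2}+1$ respectively. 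No dyadic summation in $\xi$ is required.

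Your statement that ``the non-degeneracy condition \eqref{eq:non-deg-local} is only locally uniform in $\xi$'' is the crux of the discrepancy: this is true only if one insists on $\vartheta\equiv1$. With the weight $\vartheta$ allowed to grow polynomially, the condition is globally uniform, and the whole point of formulating Theorem~\ref{thm:regularity} with $\vartheta$ was to make the present corollary a one-line deduction. Your dyadic scheme could in principle be made to work, but the constant-tracking you sketch (``roughly $2p$ from the symbol entering through its square and one further power from the velocity integration'', giving $\theta=2p+1$) is not actually carried out; in particular the $\ell$-dependence in the implicit constants of \eqref{eq:non-deg-local} (second line) and in the right-hand side terms $\|\bar\eta_\ell(|u_0|)\|_{L^1}^{1/2}$, $\|\varTheta_{\eta_\ell}(|u|)\|_{L^1}^{1/2}$, $\|m(\eta_\ell+|\eta_\ell'|)\|_{L^1_\omega\mathcal M}$ would all have to be quantified and shown to sum, and it is not obvious from the sketch that this lands exactly on $2p+3$. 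Your closing remark about establishing finiteness via the non-degenerate approximations and lower semicontinuity is sensible, but it is not an issue in the paper's treatment since all right-hand-side quantities in \eqref{eq:loc_regularity} are finite a priori for a kinetic solution with $u_0\in L^{2p+3}$.
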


We now proceed with a more detailed discussion of the comparison to the proof of well-posedness of entropy solutions for deterministic parabolic-hyperbolic PDE \blue{in the $L^1$ setting} (cf.\ \cite{chen}) 
\begin{equation}\label{eq:intro-det-par-hyp}
  \dif u+\diver(B(u))\dif t=\diver(A(u)\nabla u )\dif t.
\end{equation}
The inclusion of stochastic perturbation causes several additional difficulties. First, the proof of existence of solutions in \cite{chen} relies on the (simple) proof of $BV$ regularity of solutions to \eqref{eq:intro-det-par-hyp}. Such a $BV$ estimate is not known in the stochastic case and does not seem to be easy to obtain (for a discussion of the necessity of such estimates in the construction of a solution see Section \ref{sec:ex} below). \blue{In the $L^p$ setting of \cite{dehovo}, the $BV$ regularity was replaced by $W^{\sigma,1}$ regularity which held true for smooth initial conditions based on similar calculations as in the uniqueness proof. Since we do not suppose any growth assumptions on the coefficients $A,B$ in \eqref{eq} these arguments cannot be used here anymore.} Therefore, we instead rely on regularity obtained based on averaging techniques. \blue{In contrast to \cite{dehovo} the obtained regularity estimates do not only prove preservation of some regularity of the initial condition, but yield a regularizing effect. Indeed, the initial condition does not need to be smooth for the averaging lemma to imply regularity for positive times. This fact in particular permits to reduce the number of approximation layers used in the proof of existence.}\\ Second, the equi-integrability estimates encoded in the decay properties of the kinetic measure in the deterministic situation, that is, in the assumption (cf.\ \cite[Definition 2.2 (iv)]{chen})
  $$\lim_{|\xi|\to \infty} \int m(t,x,\xi) dtdx =0$$
do not seem to be suitable in the stochastic case, since the {\em multiplicative} noise term $\Phi(x,u)$ is less well behaved in terms of these estimates. Indeed, the corresponding proof of a-priori estimates proceeds along different lines than in the deterministic case (cf.\ Proposition \ref{prop:decay1} below). Therefore, we replace these decay estimates by the weaker decay condition 
\blue{\begin{equation}\label{eq:m_decay}
  \lim_{\ell \to \infty} \frac{1}{2^\ell}\E \int 1_{2^\ell\leq|\xi|\leq 2^{\ell+1}} m(t,x,\xi)dtdxd\xi =0
\end{equation}}
and prove the uniqueness of kinetic solutions under this weaker assumption.  
\blue{The above difficulty is due to working in an $L^1$-setting. In the situation of initial data/solutions in $L^2$, as in \cite{dehovo}, the kinetic measure can be shown to have a.s.\ finite mass which allows to replace \eqref{eq:m_decay} by a stronger assumption.}

The kinetic approach to (deterministic) scalar conservation laws was introduced by Lions, Perthame, Tadmor in \cite{lions} and extended to parabolic-hyperbolic PDE in \cite{chen}, including PDE of porous media type. In the stochastic case, the well-posedness of such PDE had not previously been shown. Under more restrictive assumptions, as outlined above, the well-posedness to \eqref{eq} has been obtained in \cite{dehovo}.
Special cases of SPDE of the type \eqref{eq} have attracted a lot of interest in recent years. For deterministic hyperbolic conservation laws, see \cite{vov1, vov, kruzk, lpt1, lions, perth, tadmor-1}. Stochastic degenerate parabolic equations were studied in \cite{BVW, dehovo, degen} and stochastic conservation laws in \cite{bauzet, karlsen, debus2, debus, DV, feng, bgk, holden, kim, stoica, wittbold}. Recently, also scalar conservation laws driven by rough paths have been considered in \cite{friz2, H16, DGHT}. Other types of stochastic scalar conservation laws, for which randomness enters in form of a random flux have been considered in \cite{lps, lps1, gess, GS}. Stochastic quasilinear parabolic-hyperbolic SPDE with random flux have been considered in \cite{GS16}.

The paper is organized as follows. In Section \ref{sec:prelim}, we introduce the precise framework and the concept of kinetic solutions. Our main regularity result will be proven in Section \ref{sec:averaging}. This is then used in Section \ref{sec:L1} to prove the well-posedness for kinetic solutions.

\section{Preliminaries}\label{sec:prelim}

\subsection{Notation}\label{sec:notation}

In this paper, we use the brackets $\langle\cdot,\cdot \rangle$ to denote the duality between the space of distributions over $\mt \times\mr$ and $C_c^\infty(\mt \times\mr)$ and the duality between $L^p(\mt \times\mr)$ and $L^q(\mt \times\mr)$. If there is no danger of confusion, the same brackets will also denote the duality between $L^p(\mt )$ and $L^q(\mt )$.
By $\mathcal{M}([0,T]\times\mt\times\R)$ we denote the set of Radon measures on $[0,T]\times\mt\times\R$ and $\mathcal{M}^+([0,T]\times\mt\times\R)$  then contains  nonnegative Radon measures and $\mathcal{M}_b([0,T]\times\mt\times\R)$ contains finite measures. We also use the notation
$$n(\phi)=\int_{[0,T]\times\mt\times\R}\phi(t,x,\xi)\,\dif n(t,x,\xi),\quad n\in\mathcal{M}([0,T]\times\mt\times\R),\,\phi\in C_c([0,T]\times\mt\times\R).$$
In order to signify that $n\in\mathcal{M}([0,T]\times\mt\times\R)$ is only considered on $[0,T]\times\mt\times D$ for some compact set $D\subset\R$ we write $n\ind_D$. In particular,
$$\|n\ind_D\|_{\mathcal{M}_{t,x,\xi}}=\int_{[0,T]\times\mt\times D}\dif |n|(t,x,\xi).$$

The differential operators of gradient $\nabla$, divergence $\diver$ and Laplacian $\Delta$ are always understood with respect to the space variable $x$.
For two matrices $A,B$ of the same size we set
  $$A:B := \sum_{ij}a_{ij}b_{ij}.$$
Throughout the paper, we use the term {\em representative} for an element of a class of equivalence.

Finally, we use the letter $C$ to denote a generic constant that might change from one line to another. We also employ the notation $x\lesssim y$ if there exists a constant $C$ independent of the variables under consideration such that $x\leq C y$ and we write $x\sim y$ if $x\lesssim y$ and $y\lesssim x$. \blue{By $x\lesssim_z y$ we mean that the corresponding proportional constant depends on the quantity $z$.}

\subsection{Setting}

We now give the precise assumptions on each of the terms appearing in the above equation \eqref{eq}. We work on a finite-time interval $[0,T],\,T>0,$ and consider periodic boundary conditions: $x\in\mt $ where $\mt=\R^N|(2\pi\mz^N) $ is the $N$-dimensional torus. For the flux $B$ we assume
\begin{equation}\label{eq:ass_B}
  B=(B_1,\dots,B_N) \in C^2(\R,\R^N)
\end{equation}
and we set $b = \nabla B$. 
The diffusion matrix $A=(A_{ij})_{i,j=1}^N\in C^1(\R;\R^{N\times N})$ is assumed to be symmetric, positive semidefinite and its square root $\sigma := A^\frac{1}{2}$ is assumed to be locally $\gamma$-H\"older continuous for some $\gamma>1/2$, that is, for all $R>0$ there is a constant $C=C(R)$ such that
\begin{equation}\label{sigma}
|\sigma(\xi)-\sigma(\zeta)|\leq C(R)|\xi-\zeta|^\gamma\quad\forall \xi,\zeta\in 
\mr,\,|\xi|,|\zeta|\le R.
\end{equation}
We will further require a non-degeneracy condition for the symbol $\mathcal{L}$ associated to the kinetic form of \eqref{eq} 
  $$\mathcal{L}(iu,in,\xi):=i(u+b(\xi)\cdot n)+n^*A(\xi)n.$$

 For $J,\delta>0$ and $\eta \in C_b^\infty(\R)$ nonnegative let 
$$\omega^\eta_{\mathcal L}(J;\delta):=\sup_{\substack{u\in\mr,n\in \mz^N\\|n|\sim J}}|\Omega^\eta_{\mathcal L}(u,n;\delta)|,\qquad \Omega^\eta_{\mathcal L}(u,n;\delta):=\{\xi\in\supp \eta;\,|\mathcal L(iu,in,\xi)|\leq \delta\}$$
and $\mathcal{L}_\xi:=\partial_\xi\mathcal{L}$. 
We suppose that there exist $\alpha \in (0,1)$, $\beta>0$ and a measurable map $\vartheta \in L^\infty_{loc}(\R;[1,\infty))$ such that
\begin{equation}\label{eq:non-deg-local}
\begin{split}
\omega^\eta_{\mathcal L}(J;\delta)&\lesssim_{{\eta}} \bigg(\frac{\delta}{J^\beta}\bigg)^\alpha\\
\sup_{\substack{u\in\mr,n\in \mz^N\\|n|\sim J}}\sup_{\xi\in \supp \eta}
\frac{|\mathcal L_\xi(iu,in,\xi)|}{\vartheta(\xi)}
&\lesssim_{{\eta}} J^{\beta},\quad \forall \delta>0, J\gtrsim 1.
\end{split}\end{equation}
The requirement of a suitable non-degeneracy condition is classical in the theory of averaging lemmas and therefore it will be essential for Theorem \ref{thm:regularity}. The localization $\eta$ and the weight $\vartheta$ give two possibilities to control the  growth of $\mathcal{L}_\xi$ in $\xi$. In the proof of existence in Subsection \ref{sec:proof_ex}, we employ \eqref{eq:non-deg-local} with $\vartheta\equiv 1$ and  $\eta$ compactly supported which allows to obtain regularity of the localized average $\int_\R\chi_u(\xi)\eta(\xi)\dif\xi$ without  any further integrability assumptions on $u$. On the contrary, with a suitable choice of $\vartheta$, we may consider $\eta\equiv 1$ to obtain regularity of $u$ itself provided it possesses certain additional integrability. We refer the reader to Subsection \ref{sec:examples} for further discussion of \eqref{eq:non-deg-local} as well as for application to particular examples.

Regarding the stochastic term, let $(\Omega,\mf,(\mf_t)_{t\geq0},\prst)$ be a stochastic basis with a complete, right-continuous filtration. Let $\mathcal{P}$ denote the predictable $\sigma$-algebra on $\Omega\times[0,T]$ associated to $(\mf_t)_{t\geq 0}$. The initial datum $u_0$ is $\mf_0$-measurable and the process $W$ is a cylindrical Wiener process, that is, $W(t)=\sum_{k\geq1}\beta_k(t) e_k$ with $(\beta_k)_{k\geq1}$ being mutually independent real-valued standard Wiener processes relative to $(\mf_t)_{t\geq0}$ and $(e_k)_{k\geq1}$ a complete orthonormal system in a separable Hilbert space $\mathfrak{U}$. In this setting we can assume without loss of generality that the $\sigma$-algebra $\mf$ is countably generated and $(\mf_t)_{t\geq 0}$ is the filtration generated by the Wiener process and the initial condition.
For each $z\in L^2(\mt )$ we consider a mapping $\,\varPhi(z):\mathfrak{U}\rightarrow L^2(\mt )$ defined by $\varPhi(z)e_k=g_k(\cdot,z(\cdot))$. We suppose that $g_k\in C(\mt \times\mr)$ and there exists a sequence $(\alpha_k)_{k\geq 1}$ of positive numbers satisfying $D:=\sum_{k\geq 1}\alpha_k^2<\infty$ such that
\begin{align}
|g_k(x,0)|+|\nabla_xg_k(x,\xi)|+|\partial_\xi g_k(x,\xi)| &\leq \alpha_k,\qquad\forall x\in\mt ,\,\xi\in\mr. \label{linrust}
\end{align}
Note that it follows from \eqref{linrust} that
\begin{equation}\label{growth_new}
|g_k(x,\xi)|\leq \alpha_k(1+|\xi|),\qquad \forall x\in\mt,\,\xi\in\mr.
\end{equation}
and
\begin{equation}\label{skorolip}
\sum_{k\geq1}|g_k(x,\xi)-g_k(y,\zeta)|^2\leq C\big(|x-y|^2+|\xi-\zeta|^2\big),\qquad\forall x,y\in\mt ,\,\xi,\zeta\in\mr.
\end{equation}
Consequently, denoting $G^2(x,\xi)=\sum_{k\geq 1}|g_k(x,\xi)|^2$ it holds
$$
G^2(x,\xi)\leq 2D(1+|\xi|^2)\qquad \forall x\in\mt,\,\xi\in\mr.
$$
The conditions imposed on $\varPhi$, particularly assumption \eqref{linrust}, imply that
$$\varPhi:L^2(\mt )\longrightarrow L_2(\mathfrak{U};L^2(\mt )),$$
where $L_2(\mathfrak{U};L^2(\mt ))$ denotes the collection of Hilbert-Schmidt operators from $\mathfrak{U}$ to $L^2(\mt )$. Thus, given a predictable process $u\in L^2(\Omega;L^2(0,T;L^2(\mt )))$, the stochastic integral $t\mapsto\int_0^t\varPhi(u)\dif W$ is a well defined process taking values in $L^2(\mt )$ (see \cite{daprato} for a detailed construction).

Finally, we define the auxiliary space $\mathfrak{U}_0\supset\mathfrak{U}$ via
$$\mathfrak{U}_0=\bigg\{v=\sum_{k\geq1}\alpha_k e_k;\;\sum_{k\geq1}\frac{\alpha_k^2}{k^2}<\infty\bigg\},$$
endowed with the norm
$$\|v\|^2_{\mathfrak{U}_0}=\sum_{k\geq1}\frac{\alpha_k^2}{k^2},\qquad v=\sum_{k\geq1}\alpha_k e_k.$$
Note that the embedding $\mathfrak{U}\hookrightarrow\mathfrak{U}_0$ is Hilbert-Schmidt. Moreover, trajectories of $W$ are $\prst$-a.s. in $C([0,T];\mathfrak{U}_0)$ (see \cite{daprato}).

\subsection{Kinetic solutions}

Let us introduce the definition of kinetic solution as well as the related definitions used throughout this paper. It is a generalization of the concept of kinetic solution studied in \cite{dehovo}, which is suited for establishing well-posedness in the $L^1$-framework, that is, for initial conditions in $L^1(\Omega;L^1(\T))$. In that case, the corresponding kinetic measure is not finite and one can only prove suitable decay at infinity.

\begin{defin}[Kinetic measure]\label{meesL1}
A mapping $m$ from $\Omega$ to $\mathcal{M}^+([0,T]\times\mt \times\mr)$, the set of nonnegative Radon measures over $[0,T]\times\mt \times\mr$, is said to be a kinetic measure provided
\begin{enumerate}
 \item For all $\psi\in C_c([0,T)\times\mt \times\mr)$, the process 
$$\int_{[0,t]\times \mt \times\mr}\psi(s,x,\xi)\,\dif m(s,x,\xi)$$
is predictable.
 \item Decay of $m$ for large $\xi$: it holds true that
\begin{equation*}\label{infinityL1}
\lim_{\ell\rightarrow\infty}\frac{1}{2^\ell}\stred\,m(A_{2^\ell})=0,
\end{equation*}
where
$$A_{2^\ell}=[0,T]\times\mt \times\{\xi\in\mr;\,2^\ell\leq|\xi|\leq 2^{\ell+1}\},$$
\end{enumerate}
\end{defin}

\begin{defin}[Kinetic solution]\label{kinsolL1}
A map $u\in L^1(\Omega\times[0,T],\mathcal{P},\dif \prst\otimes\dif t;L^1(\mt  ))$ is called a kinetic solution to \eqref{eq} with initial datum $u_0$ if the following conditions are satisfied
\begin{enumerate}
\item For all $\phi\in C^\infty_c(\mr)$, $\phi\geq0$,
$$\diver\int_0^u\phi(\zeta)\sigma(\zeta)\,\dif\zeta\in L^2(\Omega\times[0,T]\times\mt  ).$$ 
\item For all $\phi_1,\phi_2\in C_c^\infty(\mr)$, $\phi_1,\phi_2\geq0$, the following chain rule formula holds true
\begin{equation}\label{eq:chainruleL1}
\diver\int^u_0\phi_1(\zeta)\phi_2(\zeta)\sigma(\zeta)\,\dif \zeta=\phi_1(u)\diver\int^u_0\phi_2(\zeta)\sigma(\zeta)\,\dif \zeta\quad\text{in}\quad  L^2(\Omega\times[0,T]\times\mt  ).
\end{equation}
\item 
\blue{Let $\phi\in C^\infty_c(\R)$, $\phi\geq0$, and let $n^\phi:\Omega\rightarrow\mathcal{M}^+([0,T]\times\mt )$ be defined as follows: for all $\varphi\in C_c^\infty([0,T] \times\mt )$,
\begin{equation}\label{eq:par_meas}
n^\phi(\varphi)=\int_0^T\int_{\mt  }\varphi(t,x)\bigg|\diver\int_0^u\sqrt{\phi(\zeta)}\,\sigma(\zeta)\,\dif \zeta\bigg|^2\dif x\,\dif t.
\end{equation}
There exists a kinetic measure $m$ such that for all $\varphi\in C^\infty_c([0,T]\times\mt)$, $\varphi\geq0$, and $\phi\in C^\infty_c(\R)$, $
\phi\geq0$, it holds $m(\varphi\phi)\geq n^\phi(\varphi)$, $\prst$-a.s.,}
%
and, in addition, the pair $(f=\ind_{u>\xi},m)$\footnote{\blue{Here $\ind_{u>\xi}$ is considered as a function of four variables, namely $(\omega,t,x,\xi)\mapsto \ind_{u(\omega,t,x)>\xi}$. }} satisfies, for all $\varphi\in C_c^\infty([0,T)\times\mt  \times\mr),$ $\prst\text{-a.s.}$,
\begin{align}\label{eq:kinformulL1}
\begin{split}
\int_0^T&\big\langle f(t),\partial_t\varphi(t)\big\rangle\dif t+\big\langle f_0,\varphi(0)\big\rangle+\int_0^T\big\langle f(t),b\cdot\nabla\varphi(t)\big\rangle\dif t+\int_0^T\big\langle f(t),A:\totdif^2 \varphi(t)\big\rangle\dif t\\
=&-\sum_{k\geq1}\int_0^T\int_{\mt  }g_k\big(x,u(t,x)\big)\varphi\big(t,x,u(t,x)\big)\dif x\,\dif\beta_k(t)\\
&-\frac{1}{2}\int_0^T\int_{\mt  }G^2\big(x,u(t,x)\big)\partial_\xi\varphi\big(t,x,u(t,x)\big)\dif x\,\dif t+m(\partial_\xi\varphi).
\end{split} 
\end{align}
\end{enumerate}
\end{defin}

The definition of a kinetic solution given in Definition \ref{kinsolL1} generalizes the definition of kinetic solutions given in \cite[Definition 2.2]{dehovo} which applies to the case of high integrability, that is, for $u \in L^p(\Omega;L^p([0,T]\times\mt ))$ for all $p\ge 1$.
\blue{We note that in particular the  chain rule \eqref{eq:chainruleL1} is weaker than the corresponding version \cite[(2.5)]{dehovo}. As a consequence, the parabolic dissipation $n^\phi$ does not necessarily define a measure with respect to the variable $\xi$. It was already mentioned above that the kinetic measure $m$ is generally not a finite measure and the decay assumption from Definition \ref{meesL1}, (ii), is weaker than the one in \cite[Definition 2.1, (ii)]{dehovo}.}

\begin{rem}
   Let $u \in L^p(\Omega;L^p([0,T]\times\mt ))$ for all $p\ge 1$. Then, $u$ is a kinetic solution to \eqref{eq} in the sense of \cite[Definition 2.2]{dehovo} if and only if $u$ is a kinetic solution in the sense of Definition \ref{kinsolL1}.
\end{rem}

\begin{rem}
We emphasize that a kinetic solution is, in fact, a class of equivalence in $L^1(\Omega\times[0,T];L^1(\mt  ))$ so not necessarily a stochastic process in the usual sense. The term {\em representative} is then used to denote an element of this class of equivalence.
\end{rem}

Let us conclude this section with two related definitions.

\begin{defin}[Young measure]
Let $(X,\lambda)$ be a finite measure space. A mapping $\nu$ from $X$ to the set of probability measures on $\mr$ is said to be a Young measure if, for all $\psi\in C_b(\mr)$, the map $z\mapsto\nu_z(\psi)$ from $X$ into $\mr$ is measurable. We say that a Young measure $\nu$ vanishes at infinity if, for all $p\geq 1$,
\begin{equation*}
\int_X\int_\mr|\xi|^p\dif\nu_z(\xi)\,\dif\lambda(z)<\infty.
\end{equation*}
\end{defin}

\begin{defin}[Kinetic function]
Let $(X,\lambda)$ be a finite measure space. A measurable function $f:X\times\mr\rightarrow[0,1]$ is said to be a kinetic function if there exists a Young measure $\nu$ on $X$ vanishing at infinity such that, for $\lambda$-a.e. $z\in X$, for all $\xi\in\mr$,
$$f(z,\xi)=\nu_z(\xi,\infty).$$
\end{defin}

\begin{rem}
Note, that if $f$ is a kinetic function then $\partial_\xi f=-\nu$ for $\lambda$-a.e. $z\in X$. Similarly, let $u$ be a kinetic solution of \eqref{eq} and consider $f=\ind_{u >\xi}$. We have $\,\partial_\xi f=-\delta_{u=\xi}$, where $\nu=\delta_{u=\xi}$ is a Young measure on $\Omega\times[0,T]\times\mt$.
Throughout the paper, we will often write $\nu_{t,x}(\xi)$ instead of $\delta_{u(t,x)=\xi}$.
\end{rem}

\subsection{Applications}\label{sec:examples}
In this section we consider the model example of a convection-diffusion SPDE with polynomial nonlinearities, that is, let $N=1$ and consider
$$\dif u+\partial_x \left(\frac{u^k}{k}\right)\dif t=\partial_x\big(|u|^{m-1}\partial_x u\big)\dif t+\varPhi(x,u)\dif W,$$
i.e.\ \eqref{eq} with $b(\xi)=B'(\xi)=\xi^{k-1}$, $A(\xi)=|\xi|^{m-1}$, for $k \ge 2$, $m>2.$ Hence, 
$$\mathcal{L}(iu,in,\xi)=i(u+\xi^{k-1}n)+|\xi|^{m-1}n^2,$$
and
\begin{equation}\label{eq:cond2}
|\mathcal{L}_\xi(iu,in,\xi)|\lesssim |\xi|^{k-2}|n|+|\xi|^{m-2}n^2.
\end{equation}
For $\eta\in C^\infty_b(\R)$ and $u\in\mr,\,n\in \mz,\,|n|\sim J$ we consider
$$ \Omega^\eta_{\mathcal L}(u,n;\delta)=\{\xi\in\supp\eta;\,|i(u+\xi^{k-1} n)+|\xi|^{m-1}n^2|\leq \delta\}$$
and observe
$$\Omega^\eta_{\mathcal L}(u,n;\delta)\subset\Omega_A\cap \Omega_b, $$
where
\begin{align*}
 \Omega_A&:=\{\xi\in\supp\eta;\,|\xi|^{m-1}|n|^2\leq \delta\},
 \\\Omega_b&:=\{\xi\in\supp\eta;\,|i(u+\xi^{k-1} n)|\leq \delta\}.
\end{align*}
Note that the set $\Omega_A$ is localized around $0$ in the sense that
$$\Omega_A=\left\{\xi\in\supp\eta;\,|\xi|\leq \left(\frac{\delta}{J^2}\right)^\frac{1}{m-1}\right\},$$
whereas the set $\Omega_b$ is moving according to the value of $u$:
$$\Omega_b=\left\{\xi\in\supp\eta;\,\left(\frac{u-\delta}{J}\right)^\frac{1}{k-1}\leq\xi\leq \left(\frac{u+\delta}{J}\right)^\frac{1}{k-1}\right\}.$$
In view of the second part of the condition \eqref{eq:non-deg-local} we choose $\beta=2$ whenever a second order operator is present. Therefore we set $\beta=2$ and $\alpha=\frac{1}{m-1}$, which yields the first part of \eqref{eq:non-deg-local} independently of $\eta$
\begin{equation}\label{eq:66}
\omega_{\mathcal{L}}^\eta(J;\delta)\lesssim\left(\frac{\delta}{J^2}\right)^\frac{1}{m-1}.
\end{equation}
Regarding the second condition, it is necessary to control the $\xi$-growth in \eqref{eq:cond2}. Our formulation of the nondegeneracy condition \eqref{eq:non-deg-local} offers two ways of doing so: either using a (compactly supported) localization $\eta$ or a weight $\vartheta$. Using the first approach, Theorem \ref{thm:regularity} yields regularity of the localized average $\bar{\eta}(u)=\int_\R\chi_{u(t,x)}(\xi)\eta(\xi)\dif\xi$ without any further integrability assumptions on the solution $u$. On the other hand, the second approach allows to obtain regularity of the solution $u$ itself, i.e. setting $\eta\equiv1$, but requires higher integrability of $u$. To be more precise, in the case of \eqref{eq:cond2} we set $\vartheta(\xi)=1+|\xi|^{k\vee m-2}$ and assume that $u\in L^p(\Omega\times[0,T]\times\T)$ for $p=2(k\vee m-2)+3$.

In the case of a purely hyperbolic equation with a polynomial nonlinearity $b(\xi)=\xi^{k-1}$, $ k\geq2,$ we obtain
$$\Omega_{\mathcal{L}}^\eta(u,n;\delta)=\left\{\xi\in\supp\eta;\,\frac{u-\delta}{J}\leq\xi^{k-1}\leq \frac{u+\delta}{J}\right\},$$
which implies the first condition in \eqref{eq:non-deg-local} independently of $\eta$ with $\alpha=\frac{1}{k-1}$, $\beta=1$. For the second condition we proceed  the same way as above.

\section{Regularity}
\label{sec:averaging}

In this section we establish a regularity result for solutions to \eqref{eq}, based on averaging techniques. Throughout this section we use the following notation: for a kinetic solution $u$, let $\chi:=\chi_u=\ind_{u>\xi}-\ind_{0>\xi}.$
Then we have, in the sense of distributions,
\begin{equation}\label{eq:f_dist_form}
   \partial_t \chi+b(\xi)\cdot\nabla \chi-A(\xi):D^2 \chi=\partial_\xi q-\sum_{k=1}^\infty (\partial_\xi \chi) g_k\dot{\beta_k}+\sum_{k=1}^\infty \delta_0 g_k\dot{\beta_k},
\end{equation}
where $q=m-\frac{1}{2}G^2\delta_{u=\xi}.$ For $\eta \in C^\infty_b(\R)$ let $\bar\eta \in C^\infty$ be such that $\bar \eta'=\eta$ and $\bar\eta(0)=0$. We then have
$$\bar\eta(u)=\int_\R\chi_{u(t,x)}(\xi)\eta(\xi)\,\dif\xi.$$

\begin{thm}\label{thm:regularity} 
Assume \eqref{eq:ass_B}, \eqref{linrust}. 
 Let $\eta \in C^\infty_b(\R;\R_+)$ and assume that there are $\alpha \in (0,1)$, $\beta>0$ and a measurable map $\vartheta \in L^\infty_{loc}(\R;[1,\infty))$ such that \eqref{eq:non-deg-local} is satisfied.
Let $\varTheta_{\eta}:\R\to\R_+$ such that $\varTheta_{\eta}' = (|\xi|^2+1)\vartheta^2(\xi)(\eta(\xi)+|\eta'|(\xi))$. If $u$ is a kinetic solution to \eqref{eq} then
   $$\bar\eta(u)=\int_\R\chi_{u(t,x)}(\xi)\eta(\xi)\,\dif\xi 
   \in L^{r}(\Omega\times[0,T]; W^{s,r}(\T)),\qquad s<\frac{\alpha^2\beta}{6(1+2\alpha)}.$$
   with $\frac{1}{r}>\frac{1-\theta}{2}+\frac{\theta}{1}$, $\theta = \frac{\alpha}{4+\alpha}$ and
   \begin{equation}\begin{aligned}\label{eq:loc_regularity}
   \|\bar\eta(u)\|_{L^{r}(\Omega\times[0,T]; W^{s,r}(\T))}
   \lesssim_\eta& 
     \|\bar\eta(|u_{0}|)\|^{1/2}_{L^1_{\omega,x}} + \|\varTheta_{\eta}(|u|) \|_{L^1_{\omega,t,x}}^{1/2}+\sup_{0\leq t\leq T}\|\bar\eta(|u|)\|_{L^1_{\omega,x}}\\
     &+ \| m\vartheta(\eta+|\eta'|)\|_{L^1_{\omega}\mathcal{M}_{t,x,\xi}} + 1.
   \end{aligned}\end{equation}
   where the constant in the inequality depends on $b$ and $A$ via the constants appearing in \eqref{eq:non-deg-local} only and on $\eta$ only via its $C^1$ norm.
\end{thm}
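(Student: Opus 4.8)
The plan is to adapt the deterministic velocity averaging lemma of Tadmor and Tao \cite{tadmor} to the stochastic kinetic equation \eqref{eq:f_dist_form}, the genuinely new point being a direct, undamped treatment of the martingale forcing by stochastic calculus rather than by inverting the kinetic symbol in the time variable. After a preliminary mollification of $u$, of the kinetic measure $m$ and of the data (which legitimizes the Fourier manipulations, the stochastic Fubini theorem and the integrations by parts below, all bounds being uniform in the mollification), I would apply a Littlewood--Paley decomposition $\chi=\sum_{j\ge-1}\Delta_j\chi$ in the space variable and prove, for each dyadic block $J=2^j$, an estimate of the shape $\|\Delta_j\bar\eta(u)\|\lesssim_\eta J^{-s_0}\,(\text{right-hand side of \eqref{eq:loc_regularity}})$ in a norm of integrability $<2$ in $(\omega,t)$ and with $s_0$ slightly above the target exponent; summing the geometric series in $j$ and interpolating then finishes the proof. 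On each block one takes the Fourier transform in $(t,x)$; extending $\chi$ by zero outside $[0,T]$ turns \eqref{eq:f_dist_form} into the algebraic identity $\mathcal L(i\tau,in,\xi)\widehat\chi=\widehat{\partial_\xi q}-\sum_k\widehat{(\partial_\xi\chi)g_k\dd\beta_k}+\sum_k\widehat{\delta_0 g_k\dd\beta_k}$ plus boundary contributions at $t=0$ and $t=T$, which are ultimately responsible for the terms $\|\bar\eta(|u_0|)\|_{L^1_{\omega,x}}^{1/2}$ and $\sup_t\|\bar\eta(|u|)\|_{L^1_{\omega,x}}$; here $\tau$ is dual to $t$, $n$ with $|n|\sim J$ is dual to $x$, and $q=m-\tfrac12 G^2\delta_{u=\xi}$.

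The heart of the matter is the splitting, at a threshold $\delta=\delta(J)$ to be optimized at the end, $\int\widehat\chi\,\eta\,\dd\xi=\int\widehat\chi\,\eta\,\ind_{|\mathcal L|\le\delta}\,\dd\xi+\int\widehat\chi\,\eta\,\ind_{|\mathcal L|>\delta}\,\dd\xi$. On the degenerate set $\{|\mathcal L|\le\delta\}$ one applies Cauchy--Schwarz in $\xi$ against the weight $\eta$, invokes the first line of \eqref{eq:non-deg-local}, namely $|\Omega^\eta_{\mathcal L}(\tau,n;\delta)|\lesssim_\eta(\delta/J^\beta)^\alpha$, and uses Plancherel in $(t,x)$ together with the pointwise bound $\int|\chi|\,\eta\,\dd\xi\le\bar\eta(|u|)$; this produces a factor $(\delta/J^\beta)^{\alpha/2}$ multiplying $L^1$-norms of $\bar\eta(|u|)$ and $\bar\eta(|u_0|)$. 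On the non-degenerate set $\{|\mathcal L|>\delta\}$ one divides by $\mathcal L$ and argues term by term. For the defect forcing $\partial_\xi q$ one integrates by parts in $\xi$, so that the derivative falls on $\eta/\mathcal L$ and produces $q\,\eta'/\mathcal L$ and $-q\,\eta\,\mathcal L_\xi/\mathcal L^2$; a further dyadic decomposition in $|\mathcal L|\sim2^\ell>\delta$, combined with the sublevel-set bound and the second line of \eqref{eq:non-deg-local} ($|\mathcal L_\xi|\lesssim_\eta\vartheta(\xi)J^\beta$), controls the $m$-part of $q$ by $\|m\,\vartheta(\eta+|\eta'|)\|_{L^1_\omega\mathcal M_{t,x,\xi}}$, while the $-\tfrac12 G^2\delta_{u=\xi}$-part, after collapsing to $\xi=u$ where $G^2\lesssim 1+|u|^2$ and combining with the $\vartheta$-weight, is absorbed into $\|\varTheta_\eta(|u|)\|_{L^1_{\omega,t,x}}$ --- the point of the ansatz $\varTheta_\eta'=(|\xi|^2+1)\vartheta^2(\eta+|\eta'|)$ being precisely that $\varTheta_\eta(|u|)$ dominates $\int(|\xi|^2+1)\vartheta^2(\eta+|\eta'|)|\chi|\,\dd\xi$. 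The elementary forcing $\sum_k\widehat{\delta_0 g_k\dd\beta_k}$ is supported at $\xi=0$, so after the $1/\mathcal L$-division and an It\^o isometry it contributes, using $\sum_k\alpha_k^2=D<\infty$ from \eqref{linrust} and a fixed power of $J$ beaten by the gain, only to the constant $+1$.

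The main obstacle is the multiplicative martingale forcing $\sum_k(\partial_\xi\chi)g_k\dd\beta_k$: being only a distribution in time, it cannot be handled by dividing its time--Fourier transform by $\mathcal L$ and invoking Plancherel pathwise. Instead I would pass to the Duhamel picture, realizing the operator with time--Fourier multiplier $\ind_{|\mathcal L|>\delta}/\mathcal L(i\cdot,in,\xi)$ as convolution against a causal kernel $\mathcal K_{n,\xi}$ --- an exponentially damped oscillation, non-smooth because of the cutoff $\ind_{|\mathcal L|>\delta}$, which is again handled by the dyadic decomposition in $|\mathcal L|\sim2^\ell$ --- and estimating the resulting stochastic convolution by the Burkholder--Davis--Gundy inequality. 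After one more integration by parts in $\xi$ (to move $\partial_\xi$ off $\chi$), this reduces to bounding, uniformly in $n$ with $|n|\sim J$, a quantity of the form $\sup_t\sum_k\int_0^t\big|\int\partial_\xi\big(\eta\,\ind_{|\mathcal L|>\delta}\mathcal L^{-1}g_k\big)(\cdot,\xi)\,\chi(s,\cdot,\xi)\,\dd\xi\big|^2\dd s$, in which the $\ell$-layered estimates of $\mathcal L^{-1}$ and $\mathcal L^{-2}$, the bound $|g_k|+|\partial_\xi g_k|\le\alpha_k(1+|\xi|)$ from \eqref{linrust}, the weight $\vartheta$ controlling $\mathcal L$ and $\mathcal L_\xi$, and again $|\chi|\le\ind_{|\xi|\le|u|}$ combine to give, modulo powers of $\delta$ and $J$, essentially $D\cdot\varTheta_\eta(|u|)$; the square root in \eqref{eq:loc_regularity} is the BDG/Cauchy--Schwarz exponent. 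This is the step where the non-degeneracy condition \eqref{eq:non-deg-local} is used most essentially and where the absence of any extra damping, in contrast to \cite{bouchut,debus2}, has to be earned by the $\ell$-decomposition; it also forces the loss of time integrability, so that the block estimate holds only in an $L^q$-scale in $(\omega,t)$ with $q<2$.

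It remains to optimize $\delta=\delta(J)$, balancing the degenerate contribution $(\delta/J^\beta)^{\alpha/2}$ against the $\delta^{-(2-\alpha)/2}$-type growth produced by the $\ell$-summation on the non-degenerate set; this yields a net gain $J^{-s_0}$ with $s_0$ of order $\alpha^2\beta/(1+2\alpha)$, and the geometric summation over the Littlewood--Paley blocks places $\bar\eta(u)$, up to the integrability loss, in an $L^2$-based Besov-type space $B^{s_0}_{2,\infty}(\T)$. Real interpolation of this estimate with the trivial bound $\bar\eta(u)\in L^1(\Omega\times[0,T]\times\T)$ --- which follows from $|\bar\eta(u)|\le\bar\eta(|u|)\lesssim_\eta|u|$ and $u\in L^1$ --- at regularity zero and with interpolation parameter $\theta=\frac{\alpha}{4+\alpha}$, upgrades the conclusion to $\bar\eta(u)\in L^r(\Omega\times[0,T];W^{s,r}(\T))$ with $s<(1-\theta)s_0=\frac{\alpha^2\beta}{6(1+2\alpha)}$ and $\frac1r>\frac{1-\theta}{2}+\frac{\theta}{1}$; the square-root versus linear appearance of the several terms on the right-hand side of \eqref{eq:loc_regularity} simply records whether they entered through a Cauchy--Schwarz/It\^o step or linearly as forcing or boundary data, and a bookkeeping of the constants throughout gives the asserted dependence on $b,A$ only through \eqref{eq:non-deg-local} and on $\eta$ only through its $C^1$ norm, which is \eqref{eq:loc_regularity}.
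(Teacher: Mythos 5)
Your proposal captures the paper's overall strategy -- Littlewood--Paley decomposition in $x$, passage to the mild (Duhamel) form, a further decomposition of each block according to the degeneracy of the symbol $\mathcal{L}$, term-by-term estimates and a final real interpolation -- and the identification of the various terms in \eqref{eq:loc_regularity} with their sources (boundary terms, defect measure, It\^o/BDG step) is essentially correct. However, there are two places where the proposal as written does not go through.

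First, the interpolation scheme is structurally wrong. You propose to establish a per-block gain $J^{-s_0}$ in a single norm, sum to obtain a ``Besov-type $B^{s_0}_{2,\infty}$'' bound, and then real-interpolate this with the trivial $L^1$ estimate. But the non-degenerate part of each block contains the kinetic defect measure $q$, which is only controlled in $\mathcal{M}_{t,x,\xi}$ (or $L^1$), never in $L^2$; hence no $L^2$-based spatial Besov bound of the decomposed average can be obtained, and optimizing $\delta=\delta(J)$ alone cannot ``balance'' an $L^2$ quantity against an $L^1$ quantity into a single norm. The paper instead uses the $K$-method at the block level: it writes $\overline{\chi_J\phi\eta}=F^0_J+F^1_J$ with the degenerate part $F^0_J$ estimated in $L^2_{\omega,t,x}$ and the non-degenerate part $F^1_J=\sum_K\int\chi^{(K)}_J\eta\,\dd\xi$ estimated in $L^1_{\omega,t,x}$, takes the splitting parameter $\delta$ to depend on both $J$ and the $K$-functional scale $r$ (namely $\delta=r^\tau J^\kappa$), optimizes over $\tau,\kappa$ and over the cutoff $R=J^{\tau'}$ in the $\sup_r$, and only then reads off the factor $J^{-\alpha^2\beta/(6(1+2\alpha))}$ multiplying the $(L^2,L^1)_{\theta,\infty}$-norm, which is a Lorentz space that embeds into $L^r$. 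The $(1+2\alpha)$ in the denominator comes precisely from this coupled optimization of $\delta$ with $r$; a single optimization in $\delta$ followed by an ``outer'' interpolation with $L^1$, as you describe, does not reproduce this exponent, and the informal computation $s<(1-\theta)s_0$ is not backed by a value of $s_0$ you actually derive.

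Second, regarding the stochastic term: your concern that one ``cannot divide the time-Fourier transform by $\mathcal{L}$ and invoke Plancherel pathwise'' leads you to a causal-kernel/BDG argument that is substantially heavier than necessary. The paper indeed takes the space-time Fourier transform of the stochastic convolution appearing in the mild form; the resulting expression is $\mathcal{L}^{-1}$ times $\int_0^\infty e^{-isu}\partial_\xi\widehat{(g_k\chi\phi)}_J\,\dd\beta_k(s)$, which is still an It\^o integral (in $s$, with a deterministic unimodular phase $e^{-isu}$), and one simply computes $\E\|\cdot\|^2_{L^2_{t,x}}$ by Plancherel in $(u,n)$ followed by the It\^o isometry -- the phase drops out. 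There is no need to realize $\ind_{|\mathcal L|>\delta}/\mathcal L$ as an explicit causal kernel, nor for BDG, and in fact using a nonsmooth cutoff $\ind_{|\mathcal L|>\delta}$ rather than the smooth dyadic partition $\psi_0+\sum_K\psi_1(\cdot/K)$ the paper uses would make the multiplier bounds (Lemma~\ref{lemma:multiplier}, Corollary~\ref{cor:mult}) that are needed for the $L^1$-type estimate of the defect measure considerably more delicate.
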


\begin{rem}\label{rem:123} If $\eta$ is compactly supported, then we may always take $\vartheta \equiv 1$ in \eqref{eq:non-deg-local}. Furthermore, in this case the right hand side in \eqref{eq:loc_regularity} is always finite.
\end{rem}

In order to deduce regularity for $u$ itself we choose $\eta \equiv 1$. If $\vartheta$ is a polynomial of order $p$, then $\varTheta_\eta$ is a polynomial of order $2p+3$ and by Lemma \ref{lem:energy_bounds} below we have $\|\varTheta_\eta(|u|) \|_{L^1_{\omega,t,x}}^\frac{1}{2}\lesssim \|u_0\|_{L^{2p+3}_{\omega,t,x}}^{\frac{2p+3}{2}}+1$ and $\| m\vartheta\|_{L^1_{\omega}\mathcal{M}_{t,x,\xi}} \lesssim \|u_0\|_{L^{p+2}}^{p+2} +1$. In conclusion, we obtain
\begin{cor}\label{cor:regularity}
   Suppose \eqref{eq:non-deg-local} is satisfied for $\eta \equiv 1$ and $\vartheta$ being a polynomial of order $p$. Let $u$ be the kinetic solution\footnote{Well-posedness of kinetic solutions to \eqref{eq} is proved in Section \ref{sec:L1} below.} to \eqref{eq}. Then
   \begin{align*}
      \|u\|_{L^{r}(\Omega\times[0,T]; W^{s,r}(\T))}
      &\lesssim
         \|u_0 \|_{L^{2p+3}_{\omega,x}}^{2p+3} + 1.
      \end{align*}
\end{cor}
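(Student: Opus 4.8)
The plan is to derive Corollary~\ref{cor:regularity} as a direct specialization of Theorem~\ref{thm:regularity} to the choice $\eta \equiv 1$, so the main task is to verify that each term on the right-hand side of \eqref{eq:loc_regularity} can be bounded by $\|u_0\|_{L^{2p+3}_{\omega,x}}^{2p+3}+1$ when $\vartheta$ is a polynomial of order $p$. First I would observe that with $\eta \equiv 1$ we have $\bar\eta(\xi) = \xi$, so $\bar\eta(u) = u$ and $\bar\eta(|u|) = |u|$; thus the left-hand side of \eqref{eq:loc_regularity} becomes $\|u\|_{L^r(\Omega\times[0,T];W^{s,r}(\T))}$ and the terms $\|\bar\eta(|u_0|)\|_{L^1_{\omega,x}}^{1/2}$ and $\sup_{0\le t\le T}\|\bar\eta(|u|)\|_{L^1_{\omega,x}}$ become $\|u_0\|_{L^1_{\omega,x}}^{1/2}$ and $\sup_{0\le t\le T}\|u(t)\|_{L^1_{\omega,x}}$ respectively. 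The latter, being an $L^1$ quantity, is controlled by the $L^{p+2}$ energy bound from Lemma~\ref{lem:energy_bounds} and hence by $\|u_0\|_{L^{2p+3}_{\omega,x}}^{2p+3}+1$ (using $p+2 \le 2p+3$ and Jensen/interpolation on the torus, which has finite measure).

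Next I would examine $\varTheta_\eta$. By definition $\varTheta_\eta' = (|\xi|^2+1)\vartheta^2(\xi)(\eta(\xi)+|\eta'|(\xi))$, and with $\eta \equiv 1$ this reduces to $\varTheta_\eta' = (|\xi|^2+1)\vartheta^2(\xi)$. Since $\vartheta$ is a polynomial of order $p$, $\vartheta^2$ has order $2p$ and $(|\xi|^2+1)\vartheta^2(\xi)$ has order $2p+2$; integrating once (with $\varTheta_\eta(0)$ a fixed constant, say $0$) gives a polynomial $\varTheta_\eta$ of order $2p+3$. Therefore $|\varTheta_\eta(|u|)| \lesssim 1 + |u|^{2p+3}$ pointwise, whence $\|\varTheta_\eta(|u|)\|_{L^1_{\omega,t,x}}^{1/2} \lesssim (1 + \|u\|_{L^{2p+3}_{\omega,t,x}}^{2p+3})^{1/2} \lesssim 1 + \|u\|_{L^{2p+3}_{\omega,t,x}}^{(2p+3)/2}$, and by Lemma~\ref{lem:energy_bounds} (the energy estimate in $L^{2p+3}$) this is $\lesssim 1 + \|u_0\|_{L^{2p+3}_{\omega,x}}^{(2p+3)/2}$, which after squaring-type considerations is absorbed into $\|u_0\|_{L^{2p+3}_{\omega,x}}^{2p+3}+1$. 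For the measure term, with $\eta\equiv1$ we get $\|m\vartheta(\eta+|\eta'|)\|_{L^1_\omega \mathcal{M}_{t,x,\xi}} = \|m\vartheta\|_{L^1_\omega\mathcal{M}_{t,x,\xi}}$; since $\vartheta$ has order $p$ one has $\vartheta(\xi) \lesssim 1 + |\xi|^p \lesssim 1 + |\xi|^{p+2}$, and Lemma~\ref{lem:energy_bounds} provides the bound $\E\int(1+|\xi|^{p+2})\,\dd m \lesssim \|u_0\|_{L^{p+2}}^{p+2}+1 \lesssim \|u_0\|_{L^{2p+3}_{\omega,x}}^{2p+3}+1$.

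Then I would collect these estimates: plugging all four bounds into \eqref{eq:loc_regularity} and using that a sum of terms each $\lesssim \|u_0\|_{L^{2p+3}_{\omega,x}}^{2p+3}+1$ is again of that form (the $1/2$-powers only improve matters once the base quantity exceeds $1$, and constants are harmless), we conclude
\[
\|u\|_{L^r(\Omega\times[0,T];W^{s,r}(\T))} \lesssim \|u_0\|_{L^{2p+3}_{\omega,x}}^{2p+3}+1,
\]
with $s < \frac{\alpha^2\beta}{6(1+2\alpha)}$ and $r,\theta$ as in Theorem~\ref{thm:regularity}. One subtlety worth spelling out is the applicability of Theorem~\ref{thm:regularity} with $\eta\equiv1$: although $\eta\equiv1 \notin C^\infty_b(\R;\R_+)$ is fine (it is smooth, bounded, nonnegative), the non-degeneracy hypothesis \eqref{eq:non-deg-local} must be assumed to hold for this $\eta$ together with the stated polynomial weight $\vartheta$, which is exactly the hypothesis of the corollary; and one should note that $\bar\eta(u)=u$ requires $\bar\eta(0)=0$, consistent with the normalization.

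The main obstacle I expect is not conceptual but bookkeeping: one must make sure the energy bounds from Lemma~\ref{lem:energy_bounds} are invoked at the correct polynomial order — $L^{2p+3}$ for the $\varTheta_\eta$ term and $L^{p+2}$ for the kinetic-measure term — and that the half-powers and the passage $p+2 \le 2p+3$ are handled so that everything genuinely collapses to the single right-hand side $\|u_0\|_{L^{2p+3}_{\omega,x}}^{2p+3}+1$ claimed. A secondary point requiring a line of justification is the finiteness (and hence meaningfulness) of the right-hand side of \eqref{eq:loc_regularity} in this non-compactly-supported case, which is precisely what the polynomial growth of $\vartheta$ combined with Lemma~\ref{lem:energy_bounds} delivers, in contrast to Remark~\ref{rem:123} where compact support of $\eta$ gave finiteness for free.
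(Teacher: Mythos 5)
Your proof is correct and follows essentially the same route as the paper: specialize Theorem~\ref{thm:regularity} to $\eta \equiv 1$, note that $\varTheta_\eta$ is then a polynomial of order $2p+3$, and close via Lemma~\ref{lem:energy_bounds} for both the $\varTheta_\eta(|u|)$ term and the $m\vartheta$ term (using $p+2 \le 2p+3$ for the latter). Aside: the clause ``$\eta\equiv 1 \notin C^\infty_b(\R;\R_+)$'' should read $\in$, since the constant function is smooth, bounded and nonnegative and hence an admissible $\eta$ in Theorem~\ref{thm:regularity}.
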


\begin{proof}[Proof of Theorem \ref{thm:regularity}]
The proof proceeds in several steps. In the first step, the solution $\chi=\chi_u=f-\ind_{0>\xi}$ is decomposed into Littlewood-Paley blocks $\chi_J$ and subsequently each Littlewood-Paley block is decomposed according to the degeneracy of the symbol $\mathcal{L}(iu,in,\xi)$. This decomposition of $f$ serves as the basis of the following averaging techniques. In the second step, each part of the decomposition is estimated separately, relying on the non-degeneracy condition \eqref{eq:non-deg-local}. In the last step, these estimates are combined and interpolated in order to deduce the regularity of $f$. 

The principle idea of the above decomposition of $f$ follows \cite{tadmor}. However, the stochastic integral in \eqref{eq} leads to additional difficulties and requires a different treatment of the time-variable. This is resolved here by passing to the mild form (cf.\ \eqref{eq:mild_form_fJphi} below) and then estimating all occurring terms separately, interpolating the estimates in the end. 

\subsection*{\texorpdfstring{Decomposition of $\chi$}{Decomposition}}

We introduce a cut-off in time, that is, let $\phi=\phi^\lambda\in C^1([0,\infty))$ such that $0\leq \phi\leq 1$, $\phi\equiv 1$ on $[0,T-\lambda]$, $\phi\equiv 0$ on $[T,\infty)$ and $|\partial_t\phi|\leq \frac{1}{\lambda}$ for some $\lambda\in (0,1)$ to be eventually sent to $0$. For notational simplicity, we omit the superscript $\lambda$ in the following computations and let it only reappear at the end of the proof, where the passage to the limit in $
\lambda$  is discussed.

Then, $\chi\phi$ solves, in the sense of distributions,
\begin{equation}\label{eq:fphi_dist_form}
	\partial_t (\chi\phi)+b(\xi)\cdot\nabla (\chi\phi)-A(\xi):D^2(\chi\phi)=\partial_\xi (\phi q)-\sum_{k=1}^\infty \partial_\xi (\chi\phi) g_k\dot{\beta_k}+\sum_{k=1}^\infty \delta_0 \phi g_k\dot{\beta_k}+\chi\partial_t\phi.
\end{equation}
Next, we decompose $\chi$ into Littlewood-Paley blocks $\chi_J$, such that the Fourier transform in space $\widehat {\chi_J}$ is supported by frequencies $|n|\sim J$ for $J$ dyadic. This is achieved by taking a smooth partition of unity $1\equiv\varphi_0(z)+\sum_{J\gtrsim 1}\varphi(J^{-1}z)$ such that $\varphi_0$ is a bump function supported inside the ball $|z|\leq 2$ and $\varphi$ is a bump function supported in the annulus $\frac{1}{2}\leq|z|\leq 2$, and setting
\begin{align*}
\chi_0(t,x,\xi)&:=\mathcal{F}_x^{-1}\big[\varphi_0\left(n\right)\hat \chi(t,n,\xi)\big](x),\\
\chi_J(t,x,\xi)&:=\mathcal{F}_x^{-1}\bigg[\varphi\left(\frac{n}{J}\right)\hat \chi(t,n,\xi)\bigg](x),\qquad J\gtrsim 1.
\end{align*}
This leads to the decomposition
  $$\chi={\chi_0+}\sum_{J\gtrsim 1}\chi_J.$$
The regularity of $\chi_0$ being trivial, we only focus on the estimate of $\chi_J$ for $J\gtrsim 1$.  
Localizing \eqref{eq:fphi_dist_form} in Littlewood-Paley blocks yields
\begin{equation*}\begin{aligned}
	\partial_t (\chi_J\phi)+b(\xi)\cdot\nabla (\chi_J\phi)-A(\xi):D^2(\chi_J\phi)
	=&\partial_\xi (\phi q_J)-\sum_{k=1}^\infty \partial_\xi (\chi\phi g_k)_J\dot{\beta_k}
	+\sum_{k=1}^\infty  (\chi\phi \partial_\xi g_k)_J\dot{\beta_k}
	\\&+\sum_{k=1}^\infty \delta_0 \phi (g_k)_J\dot{\beta_k}+\chi_J\partial_t\phi.
\end{aligned}\end{equation*}
After a preliminary step of regularization, we may test by $S^*(T-t)\varphi$ for $\varphi\in C(\T)$ in \eqref{eq:f_dist_form}, where $S(t)$ denotes the solution semigroup to the linear operator 
  $$\chi\mapsto b(\xi)\cdot\nabla \chi-A(\xi):D^2 \chi.$$
This leads to the mild form
\begin{align}
\label{eq:mild_form_fJphi}
(\chi_J\phi)(t)
&= S(t)\chi_{J}(0)+\int_0^t S(t-s)\partial_\xi (\phi q_J) \,\dif s-\sum_{k=1}^\infty\int_0^t S(t-s)\partial_\xi (g_k \chi\phi)_J \,\dif \beta_k(s)\nonumber\\
 &\quad+\sum_{k=1}^\infty\int_0^t S(t-s)((\partial_\xi g_k)\chi\phi)_J \,\dif \beta_k(s) +\sum_{k=1}^\infty\int_0^t S(t-s)\delta_0 \phi g_{k,J}\,\dif \beta_k(s)\\
 &\quad+\int_0^t S(t-s)\chi_J\partial_t\phi \,\dif s,\nonumber
\end{align}
where we have used 
\begin{align*}
&\varphi\left(\frac{n}{J}\right)\mathcal{F}_x\bigg[\int_0^tS(t-s)\partial_\xi (\chi\phi)\,g_k\,\dif \beta_k(s)\bigg](n)\\
&=\varphi\left(\frac{n}{J}\right)\int_0^t\me^{-(ib(\xi)\cdot n+n^*A(\xi)n)(t-s)}\partial_\xi \mathcal{F}_x(g_k \chi\phi)(s,n,\xi)\,\dif\beta_k(s)\\
&-\varphi\left(\frac{n}{J}\right)\int_0^t\me^{-(ib(\xi)\cdot n+n^*A(\xi)n)(t-s)} \mathcal{F}_x((\partial_\xi g_k) \chi\phi)(s,n,\xi)\,\dif\beta_k(s)\\
&=\int_0^t\me^{-(ib(\xi)\cdot n+n^*A(\xi)n)(t-s)}\partial_\xi \mathcal{F}_x(g_k \chi\phi)_J(s,n,\xi)\,\dif \beta_k(s)\\
&-\int_0^t\me^{-(ib(\xi)\cdot n+n^*A(\xi)n)(t-s)} \mathcal{F}_x((\partial_\xi g_k) \chi\phi)_J(s,n,\xi)\,\dif \beta_k(s)
\end{align*}
and
\begin{align*}
&\varphi\left(\frac{n}{J}\right)\mathcal{F}_x\bigg[\int_0^tS(t-s)\delta_0\phi g_k\,\dif \beta_k(s)\bigg](n)\\
&=\varphi\left(\frac{n}{J}\right)\int_0^t\me^{-(ib(\xi)\cdot n+n^*A(\xi)n)(t-s)}\delta_0 \phi\widehat {g_k}(n,\xi)\,\dif \beta_k(s)\\
&=\int_0^t\me^{-(ib(\xi)\cdot n+n^*A(\xi)n)(t-s)}\delta_0 \phi\widehat{ g_{k,J}}(n,\xi)\,\dif \beta_k(s).
\end{align*}

For $J\gtrsim 1$ fixed, we next decompose the action in $\xi$-variable according to the degeneracy of the operator
$\mathcal{L}(iu,in,\xi).$
Namely, for $K$ dyadic, let $1\equiv\psi_0(z)+\sum_{K\gtrsim 1}\psi_1(K^{-1}z)$ be a smooth partition of unity such that $\psi_0$ is a bump function supported inside the ball $|z|\leq 2$ and $\psi_1$ is a bump function supported in the annulus $\frac{1}{2}\leq|z|\leq2$, and write
\begin{align*}
\ind_{0\leq t}(\chi_J\phi)(t,x,\xi)&=\mathcal{F}_{tx}^{-1}\bigg[\psi_0\left(\frac{\mathcal{L}(iu,in,\xi)}{\delta}\right)\mathcal{F}_{tx}\big[\ind_{0\leq t}(\chi_J\phi)\big](u,n,\xi)\bigg](t,x)\\
&\quad+\sum_{K\gtrsim 1}\mathcal{F}_{tx}^{-1}\bigg[\psi_1\left(\frac{\mathcal{L}(iu,in,\xi)}{\delta K}\right)\mathcal{F}_{tx}\big[\ind_{0\leq t}(\chi_J\phi)\big](u,n,\xi)\bigg](t,x)\\
&=:\chi_J^{(0)}(t,x,\xi)+\sum_{K\gtrsim 1}\chi_J^{(K)}(t,x,\xi).
\end{align*}
Hence, we consider the decomposition
\begin{align*}
 \ind_{0\leq t} \chi\phi=\ind_{0\leq t}\Big(\chi_0+ \sum_{J\gtrsim 1}\chi_J\Big)\phi =\ind_{0\leq t}\chi_0\phi+ \sum_{J\gtrsim 1} \left( \chi_J^{(0)}(t,x,\xi)+\sum_{K\gtrsim 1}\chi_J^{(K)}(t,x,\xi)\right).
\end{align*}
Since $\psi_0$ is supported at the degeneracy, we will apply a trivial estimate. However, $\psi_1$ is supported away from the degeneracy and therefore we may use the equation and the non-degeneracy assumption \eqref{eq:non-deg-local}. From \eqref{eq:mild_form_fJphi} we obtain 
\begin{align*}
	\chi_J^{(K)}(t,x,\xi)&=\mathcal{F}_{tx}^{-1}\psi_1\left(\frac{\mathcal{L}(iu,in,\xi)}{\delta K}\right)\mathcal{F}_{tx}\bigg[\ind_{0\leq t}S(t)\chi_{0,J}+\ind_{0\leq t}\int_0^t S(t-s)\partial_\xi(\phi q_J) \,\dif s\\
	&\quad\quad-\ind_{0\leq t}\sum_{k=1}^\infty\int_0^t S(t-s)\partial_\xi (g_k\chi\phi)_J \,\dif \beta_k(s)+\ind_{0\leq t}\sum_{k=1}^\infty\int_0^t S(t-s) ((\partial_\xi g_k)\chi\phi)_J \,\dif \beta_k(s)\\
	&\quad\quad+\ind_{0\leq t}\sum_{k=1}^\infty\int_0^t S(t-s)\delta_0 \phi g_{k,J}\,\dif \beta_k(s)+\ind_{0\leq t}\int_0^t S(t-s)\chi_J\partial_t\phi \,\dif s\bigg](t,x).
\end{align*}
Multiplying the above by $\eta\in C^\infty_b(\R)$ and integrating over $\xi\in\R$, we set
\begin{align*}
\int_\R \chi_J^{(K)}(t,x,\xi)\eta(\xi)\dif\xi=:I_1+I_2-I_3+I_4+I_5+I_6
\end{align*}
and we estimate the right hand side term by term below.
Note that since
\begin{align*}
&\mathcal{F}_{tx}\bigg[\ind_{0\leq t}\int_0^t S(t-s)\partial_\xi (g_k \chi\phi)_J \,\dif \beta_k(s)\bigg]\\
&=\mathcal{F}_t\bigg[\ind_{0\leq t}\int_0^t\me^{-(ib(\xi)\cdot n+n^*A(\xi)n)(t-s)}\partial_\xi\widehat{(g_k \chi\phi)}_J(s,\xi,n)\,\dif \beta_k(s)\bigg]\\
&=\frac{1}{(2\pi)^{1/2}}\int\int\ind_{0\leq s\leq t}\,\me^{-(ib(\xi)\cdot n+n^*A(\xi)n)(t-s)}\partial_\xi\widehat{(g_k \chi\phi)}_J\,\dif \beta_k(s)\,\me^{- itu}\dif t\\
&=\frac{1}{(2\pi)^{1/2}}\int\int\ind_{t-s\geq 0}\,\me^{-(ib(\xi)\cdot n+n^*A(\xi)n)(t-s)}\,\me^{- itu}\dif t\,\ind_{0\leq s}\,\partial_\xi\widehat{(g_k \chi\phi)}_J(s,\xi,n)\,\dif \beta_k(s)\\
&=\int\ind_{r\geq 0}\,\me^{-(ib(\xi)\cdot n+n^*A(\xi)n)r}\,\me^{- iru}\dif r\,\frac{1}{(2\pi)^{1/2}}\int_0^\infty\,\partial_\xi\widehat{(g_k \chi\phi)}_J(s,\xi,n)\me^{- isu}\,\dif \beta_k(s)\\
&=\frac{1}{i(u+b(\xi)\cdot n)+n^*A(\xi)n}\,\frac{1}{(2\pi)^{1/2}}\int_0^\infty\,\partial_\xi\widehat{(g_k \chi\phi)}_J(s,\xi,n)\me^{- isu}\,\dif \beta_k(s),
\end{align*}
we have
\begin{align*}
  I_3 = {\frac{1}{(2\pi)^{1/2}}}\frac{1}{(\delta K)}\int_\mr\mathcal{F}_{tx}^{-1}\bigg[\tilde\psi\left(\frac{\mathcal{L}(iu,in,\xi)}{\delta K}\right)\sum_{k=1}^\infty\int_0^\infty\me^{-isu}\partial_\xi\widehat {(g_k\chi\phi)}_{J}(s,\xi,n)\,\dif \beta_k(s)\bigg]\eta(\xi)\dif \xi,
\end{align*}
where
$$\tilde\psi(z):=\psi_1(z)/z.$$ We argue similarly for the remaining terms $I_i$, e.g.\ for $I_2$ we note that
\begin{align*}
\mathcal{F}_{tx}\bigg[\ind_{0\leq t}\int_0^t S(t-s)\partial_\xi (\phi q_J) \,\dif s\bigg]&=\frac{1}{i(u+b(\xi)\cdot n)+n^*A(\xi)n}\,\mathcal{F}_{tx} [\ind_{0\leq t}\partial_\xi (\phi q_J)]
\end{align*}

\subsection*{Estimating \texorpdfstring{$I_i$, $i=1,\dots,6$}{I1,...,I6}}

\subsubsection*{Estimate of \texorpdfstring{$I_{1}$}{I1}}

Using Plancherel and H\"older's inequality we observe
\begin{align*}
\begin{aligned}
\|I_1\|_{L^2_{t,x}}^2
&=\frac{1}{2\pi(\delta K)^2}\bigg\|\int_\mr\mathcal{F}_{tx}^{-1}\bigg[\tilde\psi\bigg(\frac{\mathcal{L}(iu,in,\xi)}{\delta K}\bigg)\hat \chi_{J}(0,n,\xi)\bigg]\eta(\xi)\dif\xi\bigg\|_{L^2_{t,x}}^2\\
&=\frac{1}{2\pi(\delta K)^2}\int_u\sum_n\bigg|\int_\mr\tilde\psi\bigg(\frac{\mathcal{L}(iu,in,\xi)}{\delta K}\bigg)\hat \chi_{J}(0,n,\xi)\eta(\xi)\dif\xi\bigg|^2\dif u\\
&\leq \frac{1}{2\pi(\delta K)^2}\int_u\sum_n\int_\mr\bigg|\tilde\psi\bigg(\frac{\mathcal{L}(iu,in,\xi)}{\delta K}\bigg)\bigg|^2\ind_{\supp \eta}\dif\xi\\
&\qquad\quad\times\int_\mr\ind_{\{|u+b(\xi)\cdot n|^2+|n^*A(\xi)n|^2<(2\delta K)^2\}}\big|\hat \chi_{J}(0,n,\xi)\big|^2\eta^2(\xi)\dif \xi\dif u.
\end{aligned}
\end{align*}
Then using \eqref{eq:non-deg-local} and
\begin{align}\label{intu}
\int_u\ind_{\{|u+b(\xi)\cdot n|^2+|n^*A(\xi)n|^2<(2\delta K)^2\}}\dif u\leq\int_u\ind_{\{|u|^2<(2\delta K)^2\}}\dif u\lesssim \delta K
\end{align}
we obtain
\begin{align}\label{est:I1}
\begin{aligned}
\|I_1\|_{L^2_{t,x}}^2\lesssim \frac{1}{\delta K}\left(\frac{\delta K}{J^\beta}\right)^\alpha\|\chi_{J}(0)\eta\|_{L^2_{x,\xi}}^2.
\end{aligned}
\end{align}

\subsubsection*{Estimate of $I_{2}$}

First, we integrate by parts to obtain
\begin{align*}
\begin{aligned}
\|I_2\|_{L^1_{t}W^{-\epsilon,q_\epsilon}_x}
&=\frac{1}{\delta K}\bigg\|\int_\mr\mathcal{F}_{tx}^{-1}\bigg[\tilde\psi\left(\frac{\mathcal{L}(iu,in,\xi)}{\delta K}\right)\mathcal{F}_{tx}(\ind_{0\leq t}\phi\partial_\xi q_{J})\bigg]\eta(\xi)\dif\xi\bigg\|_{L^1_{t}W^{-\epsilon,q_\epsilon}_x}\\
&\leq\frac{1}{(\delta K)^2}\bigg\|\int_\mr\mathcal{F}_{tx}^{-1}\bigg[\tilde\psi'\left(\frac{\mathcal{L}(iu,in,\xi)}{\delta K}\right)\frac{\mathcal{L}_\xi(iu,in,\xi)}{\vartheta(\xi)} \mathcal{F}_{tx}(\ind_{0\leq t}\phi \vartheta q_{J})(u,\xi,n)\bigg]\eta(\xi)\dif \xi\bigg\|_{L^1_{t}W^{-\epsilon,q_\epsilon}_x}\\
&\quad+\frac{1}{(\delta K)^2}\bigg\|\int_\mr\mathcal{F}_{tx}^{-1}\bigg[\tilde\psi\left(\frac{\mathcal{L}(iu,in,\xi)}{\delta K}\right)\mathcal{F}_{tx}(\ind_{0\leq t}\phi q_{J})(u,\xi,n)\bigg]\eta'(\xi)\dif \xi\bigg\|_{L^1_{t}W^{-\epsilon,q_\epsilon}_x}
\end{aligned}
\end{align*}
We apply  Corollary \ref{cor:mult} to estimate the second term on the right hand side. For the first one, we first note that \eqref{eq:non-deg-local} implies that (for simplicity restricting to the case $\beta =2$ while $\beta=1$ can be handled analogously)
$$
\frac{|b'_{i}(\xi)|}{\vartheta(\xi)}\lesssim_\eta |n|,\qquad\frac{|A_{ij}'(\xi)|}{\vartheta(\xi)}\lesssim_\eta 1,\qquad i,j\in\{1,\dots, N\}.
$$
Since $\mathcal{L}_\xi(iu,in,\xi)$ is a polynomial in $n$, we may apply 
\cite[Lemma 2.2]{BCD} to deduce that$$m(n,\xi):=\frac{\mathcal{L}_\xi(iu,in,\xi)}{\vartheta(\xi)}=\frac{ib'(\xi)\cdot n+n^* A'(\xi)n}{\vartheta(\xi)}$$
localized to $|n|\sim J$, $\xi \in \supp\eta$, is an $L^1$-Fourier multiplier with norm bounded by $J^\beta$. Revisiting the proofs of Lemma \ref{lemma:multiplier} and Corollary \ref{cor:mult} with the multiplier $\psi\left(\frac{m(u,n,\xi)}{\delta}\right)$ replaced by $\psi\left(\frac{m(u,n,\xi)}{\delta}\right)m(n,\xi)$ then allows to estimate the first term on the right hand side to obtain
\begin{align*}
\begin{aligned}
\|I_2\|_{L^1_{t}W^{-\epsilon,q_\epsilon}_x}
&\lesssim \frac{1}{(\delta K)^2}J^{\beta}\|\phi\vartheta q_{J}\eta\|_{\mathcal{M}_{t,x,\xi}}+\frac{1}{(\delta K)^2}\|\phi q_{J}\eta'\|_{\mathcal{M}_{t,x,\xi}}\lesssim \frac{1}{(\delta K)^2}J^{\beta}\|\phi\vartheta q_{J}(\eta+|\eta'|)\|_{\mathcal{M}_{t,x,\xi}},
\end{aligned}
\end{align*}
where
$$
\frac{N}{q'_\epsilon}<\epsilon<1<q_\epsilon<\frac{N}{N-\epsilon}
$$
and $\epsilon$ is chosen sufficiently small.
Consequently,
\begin{align}\label{est:I2}
\begin{aligned}
\|I_2\|_{L^1_{t,x}}
&\lesssim \frac{1}{(\delta K)^2}J^{\beta+\epsilon}\|\phi\vartheta q_{J}(\eta+|\eta'|)\|_{\mathcal{M}_{t,x,\xi}}.
\end{aligned}
\end{align}

\subsubsection*{Estimate of $I_{3}$}
Using Plancherel and It\^o's formula, we note that
\begin{align*}
\E\|I_3\|_{L^2_{t,x}}^2&=\frac{1}{2\pi(\delta K)^2}\stred\bigg\|\int_\mr\mathcal{F}_{tx}^{-1}\bigg[\tilde\psi\left(\frac{\mathcal{L}(iu,in,\xi)}{\delta K}\right)\sum_{k=1}^\infty\int_0^\infty\me^{-isu}\partial_\xi\widehat {(g_k\chi\phi)}_{J}(s,\xi,n)\,\dif \beta_k(s)\bigg]\eta(\xi)\dif \xi\bigg\|_{L^2_{t,x}}^2\\
&=\frac{1}{2\pi(\delta K)^2}\int_u\sum_n\stred\bigg|\sum_{k=1}^\infty\int_0^\infty\int_\mr\tilde\psi\left(\frac{\mathcal{L}(iu,in,\xi)}{\delta K}\right)\me^{-isu}\partial_\xi\widehat {(g_k\chi\phi)}_{J}(s,\xi,n)\eta(\xi)\dif \xi\,\dif \beta_k(s)\bigg|^2\dif u\\
&=\frac{1}{2\pi(\delta K)^2}\int_u\sum_n\stred\int_0^\infty\sum_{k=1}^\infty\bigg|\int_\mr\tilde\psi\left(\frac{\mathcal{L}(iu,in,\xi)}{\delta K}\right)\me^{-isu}\partial_\xi\widehat {(g_k\chi\phi)}_{J}(s,\xi,n)\eta(\xi)\dif \xi\bigg|^2\dif s\dif u\\
&\lesssim\frac{1}{(\delta K)^4}\,\int_u\sum_n\stred\int_0^\infty\sum_{k=1}^\infty\bigg|\int_\mr\tilde\psi'\left(\frac{\mathcal{L}(iu,in,\xi)}{\delta K}\right)\frac{\mathcal{L}_\xi(iu,in,\xi)}{\vartheta(\xi)}\widehat {(g_k\chi\vartheta\phi)}_{J}(s,\xi,n)\eta(\xi)\dif \xi\bigg|^2\dif s\dif u\\
&\quad+\frac{1}{(\delta K)^4}\,\int_u\sum_n\stred\int_0^\infty\sum_{k=1}^\infty\bigg|\int_\mr\tilde\psi\left(\frac{\mathcal{L}(iu,in,\xi)}{\delta K}\right)\widehat {(g_k\chi\phi)}_{J}(s,\xi,n)\eta'(\xi)\dif \xi\bigg|^2\dif s\dif u\\
\end{align*}
Hence, by \eqref{eq:non-deg-local} it follows that
\begin{align*}
&\E\|I_3\|_{L^2_{t,x}}^2\\
&\lesssim\frac{1}{(\delta K)^4}\,\stred\int_0^\infty\int_u\sum_n\int_\mr\bigg|\tilde\psi'\left(\frac{\mathcal{L}(iu,in,\xi)}{\delta K}\right)\frac{\mathcal{L}_\xi(iu,in,\xi)}{\vartheta(\xi)}\bigg|^2\ind_{\supp \eta}\dif\xi\\
&\quad\quad\times\int_\mr\ind_{\{|u+b(\xi)\cdot n|^2+|n^*A(\xi)n|^2<(2\delta K)^2}\}\sum_{k=1}^\infty\big|\widehat {(g_k\chi\vartheta\phi)}_{J}(s,\xi,n)\big|^2\eta^2(\xi)\dif \xi\dif u\dif s\\
&\quad+\frac{1}{(\delta K)^4}\,\stred\int_0^\infty\int_u\sum_n\int_\mr\bigg|\tilde\psi\left(\frac{\mathcal{L}(iu,in,\xi)}{\delta K}\right)\bigg|^2\ind_{\supp \eta}\dif\xi\\
&\quad\quad\times\int_\mr\ind_{\{|u+b(\xi)\cdot n|^2+|n^*A(\xi)n|^2<(2\delta K)^2}\}\sum_{k=1}^\infty\big|\widehat {(g_k\chi\vartheta\phi)}_{J}(s,\xi,n)\big|^2|\eta'(\xi)|^2\dif \xi\dif u\dif s\\
&\lesssim \frac{1}{(\delta K)^4}\left(\frac{\delta K}{J^\beta}\right)^{\alpha}J^{2\beta}\\
&\quad\quad\times\stred\int_0^\infty\int_u\sum_n\int_{\mr}\ind_{\{|u+b(\xi)\cdot n|^2+|n^*A(\xi)n|^2<(2\delta K)^2\}}\sum_{k=1}^\infty\big|\widehat {(g_k\chi\vartheta\phi)}_{J}(s,\xi,n)\big|^2\eta^2(\xi)\dif \xi\dif u\dif s\\
&+ \frac{1}{(\delta K)^4}\left(\frac{\delta K}{J^\beta}\right)^{\alpha}\\
&\quad\quad\times\stred\int_0^\infty\int_u\sum_n\int_{\mr}\ind_{\{|u+b(\xi)\cdot n|^2+|n^*A(\xi)n|^2<(2\delta K)^2\}}\sum_{k=1}^\infty\big|\widehat {(g_k\chi\vartheta\phi)}_{J}(s,\xi,n)\big|^2|\eta'(\xi)|^2\dif \xi\dif u\dif s
\end{align*}
and due to \eqref{intu} we obtain
\begin{align}\label{est:I3}
\begin{aligned}
\E\|I_3\|_{L^2_{t,x}}^2&\lesssim \frac{1}{(\delta K) ^3}\left(\frac{\delta K}{J^\beta}\right)^{\alpha}J^{2\beta}\,\stred\int_0^\infty\sum_n\int_{\R}\sum_{k=1}^\infty\big|\widehat {(g_k\chi\vartheta\phi)}_{J}(s,\xi,n)\big|^2(\eta^2+|\eta'|^2)\,\dif \xi\dif s\\
&\le \frac{1}{(\delta K)^3}\left(\frac{\delta K}{J^\beta}\right)^{\alpha}J^{2\beta}\sum_{k=1}^\infty\stred\|(g_k\chi\vartheta\phi)_{J}(\eta+|\eta'|)\|_{L^2_{t,x,\xi}}^2.
\end{aligned}
\end{align}

\subsubsection*{Estimate of $I_{4}$}
By Plancherel and It\^o's formula we have
\begin{align*}
&\E\|I_4\|_{L^2_{t,x}}^2\\
&=\frac{1}{2\pi(\delta K)^2}\stred\bigg\|\int_\mr\mathcal{F}_{tx}^{-1}\bigg[\tilde\psi\left(\frac{\mathcal{L}(iu,in,\xi)}{\delta K}\right)\sum_{k=1}^\infty\int_0^\infty\me^{-isu} \mathcal{F}_x{((\partial_\xi g_k)\chi\phi)}_{J}(s,\xi,n)\,\dif \beta_k(s)\bigg]\eta(\xi)\dif \xi\bigg\|_{L^2_{t,x}}^2\\
&=\frac{1}{2\pi(\delta K)^2}\int_u\sum_n\stred\bigg|\sum_{k=1}^\infty\int_0^\infty\int_\mr\tilde\psi\left(\frac{\mathcal{L}(iu,in,\xi)}{\delta K}\right)\me^{-isu}\mathcal{F}_x {((\partial_\xi g_k)\chi\phi)}_{J}(s,\xi,n)\eta(\xi)\dif \xi\,\dif \beta_k(s)\bigg|^2\dif u\\
&=\frac{1}{2\pi(\delta K)^2}\int_u\sum_n\stred\int_0^\infty\sum_{k=1}^\infty\bigg|\int_\mr\tilde\psi\left(\frac{\mathcal{L}(iu,in,\xi)}{\delta K}\right)\me^{-isu}\mathcal{F}_x {((\partial_\xi g_k)\chi\phi)}_{J}(s,\xi,n)\eta(\xi)\dif \xi\bigg|^2\dif s\dif u\\
\end{align*}
Thus, it follows from \eqref{eq:non-deg-local}, \eqref{intu} and \eqref{linrust} that
\begin{align}\label{est:I4}
\begin{aligned}
&\E\|I_4\|_{L^2_{t,x}}^2\\
&\leq \frac{1}{(\delta K)^2}\int_u\sum_n\int_\mr\bigg|\tilde\psi\bigg(\frac{\mathcal{L}(iu,in,\xi)}{\delta K}\bigg)\bigg|^2\ind_{\supp \eta}\dif\xi\\
&\qquad\quad\times\E\int_0^\infty\int_\mr\ind_{\{|u+b(\xi)\cdot n|^2+|n^*A(\xi)n|^2<(2\delta K)^2\}}\sum_{k=1}^\infty\big|\mathcal{F}_x {((\partial_\xi g_k)\chi\phi)}_{J}(s,\xi,n)\big|^2\eta^2(\xi)\dif \xi\,\dif s\dif u\\
&\lesssim \frac{1}{\delta K}\left(\frac{\delta K}{J^\beta}\right)^\alpha\sum_{k=1}^\infty\E\|{(\partial_\xi g_k)\chi\eta\phi)}_{J}\|_{L^2_{t,x,\xi}}^2.
\end{aligned}
\end{align}

\subsubsection*{Estimate of $I_{5}$}

We have
\begin{align}\label{est:I5}
\begin{aligned}
\E&\|I_5\|_{L^2_{t,x}}^2\\
&=\frac{1}{2\pi(\delta K)^2}\stred\bigg\|\int_\mr\mathcal{F}_{tx}^{-1}\bigg[\tilde\psi\left(\frac{\mathcal{L}(iu,in,\xi)}{\delta K}\right)\sum_{k=1}^\infty\int_0^\infty\me^{-isu}\delta_0\phi\widehat {g_{k,J}}(n,\xi)\,\dif \beta_k(s)\bigg]\eta(\xi)\dif \xi\bigg\|_{L^2_{t,x}}^2\\
&=\frac{1}{2\pi(\delta K)^2}\stred\bigg\|\int_\mr\tilde\psi\left(\frac{\mathcal{L}(iu,in,\xi)}{\delta K}\right)\sum_{k=1}^\infty\int_0^\infty\me^{-isu}\delta_0\phi\widehat {g_{k,J}}(n,\xi)\,\dif \beta_k(s)\eta(\xi)\dif \xi\bigg\|_{L^2_{u,n}}^2\\
&=\frac{\eta(0)^2}{2\pi(\delta K)^2}\int_u\sum_n\stred\bigg|\sum_{k=1}^\infty\int_0^\infty\tilde\psi\left(\frac{\mathcal{L}(iu,in,0)}{\delta K}\right)\me^{-isu}\phi\widehat {g_{k,J}}(n,0)\,\dif \beta_k(s)\bigg|^2\dif u\\
&=\frac{\eta(0)^2}{2\pi(\delta K)^2}\int_u\sum_n\int_0^\infty\sum_{k=1}^\infty\bigg|\tilde\psi\left(\frac{\mathcal{L}(iu,in,0)}{\delta K}\right)\phi\widehat {g_{k,J}}(n,0)\bigg|^2\dif s\dif u\\
&\lesssim_\eta \frac{1}{\delta K}\sum_{k=1}^\infty\|g_{k,J}(\cdot,0)\|_{L^2_{x}}^2,
\end{aligned}
\end{align}
where we also used \eqref{intu}.

\subsubsection*{Estimate of $I_{6}$}

It holds
\begin{align}\label{est:I6}
\begin{aligned}
\|I_6\|_{L^1_{t,x}}
&\lesssim\frac{1}{\delta K}\bigg\|\int_\mr\mathcal{F}_{tx}^{-1}\bigg[\tilde\psi\left(\frac{\mathcal{L}(iu,in,\xi)}{\delta K}\right)\mathcal{F}_{tx}(\ind_{0\leq t}\chi_{J}\partial_t\phi)\bigg]\eta(\xi)\dif\xi\bigg\|_{L^1_{t,x}}\\
&\lesssim \frac{1}{\delta K}\|\partial_t \phi \chi_{J}\eta\|_{L^1_{t,x,\xi}}.
\end{aligned}
\end{align}

\subsection*{Estimating \texorpdfstring{$\chi_J^{(K)}$}{chi\_JK}}
Using the above estimates \eqref{est:I1}, \eqref{est:I2}, \eqref{est:I3}, \eqref{est:I4}, \eqref{est:I5}, \eqref{est:I6}
we deduce
\begin{align*}
\bigg\| \int_\mr \chi^{(K)}_J\,\eta\,\dif \xi\bigg\|_{L^1_{\omega,t,x}}
&=\sup_{\substack{\varphi\in L^\infty_{\omega,t,x}\\\|\varphi\|_{L^\infty_{\omega,t,x}}\leq 1}}\stred\bigg\langle \int_\mr \chi^{(K)}_J\,\eta\,\dif \xi,\varphi\bigg\rangle\\
&\lesssim_\eta 
\left(\frac{1}{\delta K}\right)^\frac{1}{2}\left(\frac{\delta K}{J^\beta}\right)^\frac{\alpha}{2}\|\chi_{J}(0)\eta\|_{L^2_{\omega,x,\xi}} + \frac{1}{(\delta K)^2}J^{\beta+\epsilon} \|\phi\vartheta q_J(\eta+|\eta'|)\|_{L^1_{\omega}\mathcal{M}_{t,x,\xi}}\\
&\quad + \del{T^\frac{1}{2}}
\left(\frac{1}{(\delta K)^3}\right)^\frac{1}{2}\left(\frac{\delta K}{J^\beta}\right)^{\frac{\alpha}{2}}J^{\beta}\left(\sum_{k=1}^\infty\|(g_k\chi\vartheta\phi)_{J}(\eta+|\eta'|)\|_{L^2_{\omega,t,x,\xi}}^2\right)^{1/2}\\
&\quad+ 
\left(\frac{1}{\delta K}\right)^\frac{1}{2}\left(\frac{\delta K}{J^\beta}\right)^\frac{\alpha}{2}\left(\sum_{k=1}^\infty\|{((\partial_\xi g_k)\chi\eta\phi)}_{J}\|_{L^2_{\omega,t,x,\xi}}^2\right)^{1/2} \\
&\quad+ \del{T^\frac{1}{2}}
\left(\frac{1}{\delta K}\right)^\frac{1}{2}\left(\sum_{k=1}^\infty\|g_{k,J}(\cdot,0)\|_{L^2_x}^2\right)^{1/2} + \frac{1}{\delta K}\|\partial_t \phi \chi_{J}\eta\|_{L^1_{\omega,t,x,\xi}}.
\end{align*}
Hence,
\begin{align*}
\sum_{K\gtrsim 1}\bigg\| \int_\mr \chi^{(K)}_J\,\eta\,\dif \xi\bigg\|_{L^1_{\omega,t,x}}
&\lesssim \delta^{\frac{1}{2}(\alpha-1)}J^{-\frac{\beta\alpha}{2}}\|\chi_{J}(0)\eta\|_{L^2_{\omega,x,\xi}}\sum_{K\gtrsim 1}K^{\frac{1}{2}(\alpha-1)} \\
&\quad+ \delta^{-2}J^{\beta+\epsilon}\|\phi \vartheta q_J(\eta+|\eta'|)\|_{L^1_{\omega}\mathcal{M}_{t,x,\xi}}\sum_{K\gtrsim 1}K^{{-2}}\\
&\quad+ \del{T^\frac{1}{2}}
\delta^{-\frac{3-\alpha}{2}}J^{\frac{2-\alpha}{2}\beta}\left(\sum_{k=1}^\infty\|(g_k\chi\vartheta\phi)_{J}(\eta+|\eta'|)\|_{L^2_{\omega,t,x,\xi}}^2\right)^{1/2}\sum_{K\gtrsim 1}K^{-\frac{3-\alpha}{2}}\\
&\quad+\del{T^\frac{1}{2}}
\delta^{-\frac{1}{2}(1-\alpha)}J^{-\frac{\beta\alpha}{2}}\left(\sum_{k=1}^\infty\|{((\partial_\xi g_k)\chi\eta\phi)}_{J}\|_{L^2_{\omega,t,x,\xi}}^2\right)^{1/2}\sum_{K\gtrsim 1}K^{\frac{1}{2}(\alpha-1)}\\
&\quad+ \del{T^\frac{1}{2}}
\delta^{-\frac{1}{2}}\left(\sum_{k=1}^\infty\|g_{k,J}(\cdot,0)\|_{L^2_x}^2\right)^{1/2}\sum_{K\gtrsim 1}K^{-\frac{1}{2}}+\delta^{-1}\|\partial_t \phi \chi_{J}\eta\|_{L^1_{\omega,t,x,\xi}}\sum_{K\gtrsim 1}K^{-1}.
\end{align*}
Recall that the parameter $K$ was chosen dyadic. Therefore, since all the powers of $K$ appearing on the right hand side are negative, we deduce
\begin{align*}
\sum_{K\gtrsim 1}\bigg\| \int_\mr \chi^{(K)}_J\,\eta\,\dif \xi\bigg\|_{L^1_{\omega,t,x}}
&\lesssim
\del{T^\frac{1}{2}}\delta^{\frac{1}{2}(\alpha-1)}J^{-\frac{\beta\alpha}{2}}\|\chi_{J}(0)\eta\|_{L^2_{\omega,x,\xi}} + \delta^{{-2}}J^{\beta+\epsilon}\|\phi\vartheta q_J(\eta+|\eta'|)\|_{L^1_{\omega}\mathcal{M}_{t,x,\xi}}\\
&\quad+ \delta^{{-\frac{3-\alpha}{2}}}J^{\frac{2-\alpha}{2}\beta}\left(\sum_{k=1}^\infty\|(g_k\chi\vartheta\phi)_{J}(\eta+|\eta'|)\|_{L^2_{\omega,t,x,\xi}}^2\right)^{1/2}\\
&\quad+ \delta^{-\frac{1}{2}(1-\alpha)}J^{-\frac{\beta\alpha}{2}}\left(\sum_{k=1}^\infty\|{((\partial_\xi g_k)\chi\eta\phi)}_{J}\|_{L^2_{\omega,t,x,\xi}}^2\right)^{1/2}\\
&\quad+ \del{T^\frac{1}{2}}\delta^{-\frac{1}{2}}\left(\sum_{k=1}^\infty\|g_{k,J}(\cdot,0)\|_{L^2_x}^2\right)^{1/2} +\delta^{-1}\|\partial_t \phi \chi_{J}\eta\|_{L^1_{\omega,t,x,\xi}}
\end{align*}

\subsection*{Estimating \texorpdfstring{$\chi_J$}{chi\_J}}

We let  
  $$\overline{\chi_J\phi\,\eta}:=\int_\mr \chi_J\phi\,\eta\,\dd \xi$$ 
and write
\begin{align*}
\begin{aligned}
K(r, \overline{\chi_J\phi\,\eta})&:=\inf_{\substack{F_J^0\in L^2_{\omega,t,x},F_J^1\in {L^1_{\omega,t,x}}\\\overline{\chi_J\phi\,\eta}=F_J^0+F_J^1}}\Big(\| F_J^0\|_{L^2_{\omega,t,x}}+r\|F_J^1\|_{L^1_{\omega,t,x}}\Big),\qquad r>0,
\end{aligned}
\end{align*}
where $\overline{\chi_J\phi\,\eta}=F^0_J+F^1_J$ with
$$F^0_J:=\int_\mr \chi^{(0)}_J\,\eta\,\dd \xi,\qquad F^1_J:=\sum_{K\gtrsim 1}\int_\mr \chi^{(K)}_J\,\eta\,\dd\xi.$$
By Lemma \ref{lemma:multiplier} and \eqref{eq:non-deg-local} we have
\begin{equation*}\begin{aligned}
\|F_J^{0}\|_{L^2_{t,x}}^2
=&\bigg\|\int_\mr\mathcal{F}_{tx}^{-1}\bigg[\psi_0\left(\frac{\mathcal{L}(iu,in,\xi)}{\delta}\right)\mathcal{F}_{tx}(\chi_J\phi)(u,n,\xi)\bigg]\eta(\xi)\dif\xi\bigg\|_{L^2_{t,x}}^2\\
&\lesssim \left(\frac{\delta}{J^\beta}\right)^{\alpha}\|\chi_{J}\eta\phi\|_{L^2_{t,x,\xi}}^2.
\end{aligned}\end{equation*}
Hence, we obtain
\begin{align*}
K(r, \overline{\chi_J\phi\,\eta})&\lesssim\delta^{\frac{\alpha}{2}}J^{-\frac{\beta\alpha}{2}}\|\chi_J\eta\phi\|_{L^2_{\omega,t,x,\xi}} \\
&\quad+r\del{T^\frac{1}{2}}\delta^{\frac{1}{2}(\alpha-1)}J^{-\frac{\beta\alpha}{2}}\left[\|\chi_{J}(0)\eta\|_{L^2_{\omega,x,\xi}}+\left(\sum_{k=1}^\infty\|{((\partial_\xi g_k)\chi\eta\phi)}_{J}\|_{L^2_{\omega,t,x,\xi}}^2\right)^{1/2}\right] \\
&\quad+ r\delta^{{-2}}J^{\beta+\epsilon}\|\phi\vartheta q_J(\eta+|\eta'|)\|_{L^1_{\omega}\mathcal{M}_{t,x,\xi}}\nonumber \\
&\quad+r\del{T^\frac{1}{2}}\delta^{{-\frac{3-\alpha}{2}}}J^{\frac{2-\alpha}{2}\beta}\left(\sum_{k=1}^\infty\|(g_k\chi\vartheta\phi)_{J}(\eta+|\eta'|)\|_{L^2_{\omega,t,x,\xi}}^2\right)^{1/2}\\
&\quad+ r\del{T^\frac{1}{2}}\delta^{-\frac{1}{2}}\left(\sum_{k=1}^\infty\|g_{k,J}(\cdot,0)\|_{L^2_x}^2\right)^{1/2} +r\delta^{-1}\|\partial_t \phi \chi_{J}\eta\|_{L^1_{\omega,t,x,\xi}}\nonumber
\end{align*}
and we intend to choose $r$ to equilibrate these bounds. To do so, let $\tau,\kappa >0$ to be chosen later and set 
\begin{align*}
  \delta = r^\tau J^\kappa
\end{align*}
which yields,
\begin{align*}
r^{-\tau\frac{\alpha}{2}} K(r, \overline{\chi_J\phi\,\eta})
&\lesssim  J^{\alpha\frac{\kappa-\beta}{2}}\|\chi_J\eta\phi\|_{L^2_{\omega,t,x,\xi}}\\
&\quad +r^{1-\frac{\tau}{2}}J^{\frac{-\kappa(1-\alpha)-\beta\alpha}{2}}\left[\|\chi_{J}(0)\eta\|_{L^2_{\omega,x,\xi}}+\left(\sum_{k=1}^\infty\|{((\partial_\xi g_k)\chi\eta\phi)}_{J}\|_{L^2_{\omega,t,x,\xi}}^2\right)^{1/2}\right]\\
&\quad+ r^{1-\tau (\frac{4+\alpha}{2})}J^{{-2}\kappa+\beta+\epsilon}\|\phi\vartheta q_J(\eta+|\eta'|)\|_{L^1_{\omega}\mathcal{M}_{t,x,\xi}}\nonumber 
\\
&\quad+r^{1-\frac{3}{2}\tau} J^{{-\frac{3-\alpha}{2}}\kappa+\frac{2-\alpha}{2}\beta}\left(\sum_{k=1}^\infty\|(g_k\chi\vartheta\phi)_{J}(\eta+|\eta'|)\|_{L^2_{\omega,t,x,\xi}}^2\right)^{1/2}\\
&\quad+ r^{1-\tau(\frac{1+\alpha}{2})}J^{-\frac{1}{2}\kappa}\left(\sum_{k=1}^\infty\|g_{k,J}(\cdot,0)\|_{L^2_x} ^2\right)^{1/2}
+r^{1-\tau (\frac{2+\alpha}{2})} J^{-\kappa}\|\partial_t \phi \chi_{J}\eta\|_{L^1_{\omega,t,x,\xi}}.\nonumber
\end{align*}
Optimizing in $\tau,\kappa$ yields
\begin{align*}
  \kappa = \frac{2}{3} \beta,\quad  \tau = \frac{2}{4+\alpha}.
\end{align*}
which obviously can be satisfied. Hence,  with $\theta := \frac{\alpha}{4-\alpha}$,
\begin{align*}
J^{\frac{\alpha\beta}{6}} r^{\theta} K(r, \overline{\chi_J\phi\,\eta})
&\lesssim  \|\chi_J\eta\phi\|_{L^2_{\omega,t,x,\xi}}\\
&\quad +r^{ \frac{3+\alpha}{4+\alpha}} J^{-\frac{1}{3}\beta}\left[\|\chi_{J}(0)\eta\|_{L^2_{\omega,x,\xi}}+\left(\sum_{k=1}^\infty\|{((\partial_\xi g_k)\chi\eta\phi)}_{J}\|_{L^2_{\omega,t,x,\xi}}^2\right)^{1/2}\right] \\
&\quad+ J^{-\frac{2-\alpha}{6}\beta+\epsilon}\|\phi\vartheta q_J(\eta+|\eta'|)\|_{L^1_{\omega}\mathcal{M}_{t,x,\xi}}\nonumber 
\\
&\quad+r^{\frac{1+\alpha}{4+\alpha}} \left(\sum_{k=1}^\infty\|(g_k\chi\vartheta \phi)_{J}(\eta+|\eta'|)\|_{L^2_{\omega,t,x,\xi}}^2\right)^{1/2}\\
&\quad+ r^{\frac{3}{4+\alpha}}J^{-\frac{2-\alpha}{6}\beta}\left(\sum_{k=1}^\infty\|g_{k,J}(\cdot,0)\|_{L^2_x} ^2\right)^{1/2}
+r^{\frac{2}{4+\alpha}} J^{-\frac{4-\alpha}{6}\beta}\|\partial_t \phi \chi_{J}\eta\|_{L^1_{\omega,t,x,\xi}}.\nonumber
\end{align*}
Finally, since for $r$ large we have the elementary estimate
\begin{align*}
K(r,\overline{\chi_J\phi\,\eta})&\leq \|\overline{\chi_J\phi\,\eta}\|_{L^2_{\omega,t,x}},
\end{align*}
we apply the K-method of real interpolation to deduce
\begin{align*}
&J^{\frac{\alpha\beta}{6}}\|\overline{\chi_J\phi\,\eta}\|_{(L^2_{\omega,t,x},L^1_{\omega,t,x})_{\theta,\infty}}\\
&\leq J^{\frac{\alpha\beta}{6}}\|r^{-\theta}K(r,\overline{\chi_J\phi\,\eta})\|_{L^\infty_r}\\
&\leq J^{\frac{\alpha\beta}{6}}\|r^{-\theta}K(r,\overline{\chi_J\phi\,\eta})\ind_{r\leq R}\|_{L^\infty_r}+ J^{\frac{\alpha\beta}{6}}\|r^{-\theta}K(r,\overline{\chi_J\phi\,\eta})\ind_{r\geq R}\|_{L^\infty_r}\\
&\lesssim 
\|\chi_J\phi\eta\|_{L^2_{\omega,t,x,\xi}} 
+R^{ \frac{3+\alpha}{4+\alpha}} J^{-\frac{1}{3}\beta}\left[\|\chi_{J}(0)\eta\|_{L^2_{\omega,x,\xi}}+\left(\sum_{k=1}^\infty\|{((\partial_\xi g_k)\chi\eta\phi)}_{J}\|_{L^2_{\omega,t,x,\xi}}^2\right)^{1/2}\right]\\
&\quad+ J^{-\frac{2-\alpha}{6}\beta+\epsilon}\|\phi\vartheta q_J(\eta+|\eta'|)\|_{L^1_{\omega}\mathcal{M}_{t,x,\xi}}\nonumber 
+R^{\frac{1+\alpha}{4+\alpha}}\left( \sum_{k=1}^\infty\|(g_k\chi\vartheta \phi)_{J}(\eta+|\eta'|)\|_{L^2_{\omega,t,x,\xi}}^2\right)^{1/2}\\
&\quad+ R^{\frac{3}{4+\alpha}}J^{-\frac{2-\alpha}{6}\beta}\left(\sum_{k=1}^\infty\|g_{k,J}(\cdot,0)\|_{L^2_x} ^2\right)^{1/2}
+R^{\frac{2}{4+\alpha}} J^{-\frac{4-\alpha}{6}\beta}\|\partial_t \phi \chi_{J}\eta\|_{L^1_{\omega,t,x,\xi}}\nonumber
\\
&\quad+J^{\frac{\alpha\beta}{6}}R^{-\theta}\|\overline{\chi_J\phi\,\eta}\|_{L^2_{\omega,t,x}}.
\end{align*}
Let us take $R=J^{\tau}$ for some $\tau>0$ to be chosen below. Then
\begin{equation}\label{eq:before_tau}\begin{aligned}
&J^{\frac{\alpha\beta}{6}}\|\overline{\chi_J\phi\,\eta}\|_{(L^2_{\omega,t,x},L^1_{\omega,t,x})_{\theta,\infty}}\\
&\lesssim 
\|\chi_J\phi\eta\|_{L^2_{\omega,t,x,\xi}} +(J^{\tau})^{ \frac{3+\alpha}{4+\alpha}} J^{-\frac{1}{3}\beta}\left[\|\chi_{J}(0)\eta\|_{L^2_{\omega,x,\xi}}+\left(\sum_{k=1}^\infty\|{((\partial_\xi g_k)\chi\eta\phi)}_{J}\|_{L^2_{\omega,t,x,\xi}}^2\right)^{1/2}\right]\\
&\quad+ J^{-\frac{2-\alpha}{6}\beta+\epsilon}\|\phi\vartheta q_J(\eta+|\eta'|)\|_{L^1_{\omega}\mathcal{M}_{t,x,\xi}}
+(J^{\tau})^{\frac{1+\alpha}{4+\alpha}}\left( \sum_{k=1}^\infty\|(g_k\chi\vartheta \phi)_{J}(\eta+|\eta'|)\|_{L^2_{\omega,t,x,\xi}}^2\right)^{1/2}\\
&\quad+ (J^{\tau})^{\frac{3}{4+\alpha}}J^{-\frac{2-\alpha}{6}\beta}\left(\sum_{k=1}^\infty\|g_{k,J}(\cdot,0)\|_{L^2_x} ^2\right)^{1/2}
+(J^{\tau})^{\frac{2}{4+\alpha}} J^{-\frac{4-\alpha}{6}\beta}\|\partial_t \phi \chi_{J}\eta\|_{L^1_{\omega,t,x,\xi}}
\\
&\quad+J^{\frac{\alpha\beta}{6}}(J^{\tau})^{-\theta}\|\overline{\chi_J\phi\,\eta}\|_{L^2_{\omega,t,x}}.
\end{aligned}\end{equation}
Now we aim to choose $\tau$ in order to minimize the maximum of the exponents of $J$ occurring on the right hand side in the previous inequality.
Optimizing for $\tau$ the terms
\begin{align*}
\frac{3+\alpha}{4+\alpha} \tau -\frac{\beta}{3},\ 
\frac{1+\alpha}{4+\alpha} \tau,\ 
\frac{3}{4+\alpha}\tau-\frac{2-\alpha}{6}\beta,\ 
\frac{2}{4+\alpha}\tau - \frac{4-\alpha}{6}\beta,\ 
-\frac{\alpha}{4+\alpha}\tau+\frac{\alpha\beta}{6}.
\end{align*}
yields the choice 
  $$\tau = \frac{\beta\alpha(4+\alpha)}{6(1+2\alpha)}.$$
With this choice, all the exponents in the right hand side of \eqref{eq:before_tau} are of order less than $\frac{\alpha \beta }{6}$. Multiplication with $J^\frac{-\alpha \beta }{6}$ thus leads to negative powers of $J$ on the right hand side, the worst (i.e.\ the maximal) one being 
  $$ -\frac{\alpha^2\beta}{6(1+2\alpha)}.$$
Therefore, since $\epsilon$ was chosen small, we obtain that
\begin{align*}
\begin{aligned}
\|\overline{\chi_J\phi\,\eta}\|_{(L^2_{\omega,t,x},L^1_{\omega,t,x})_{\theta,\infty}}&\lesssim 
J^{ -\frac{\alpha^2\beta}{6(1+2\alpha)}}\bigg[\|\chi_J\phi\eta\|_{L^2_{\omega,t,x,\xi}}
 +  \del{T^\frac{1}{2}} \|\chi_{J}(0)\eta\|_{L^2_{\omega,x,\xi}} \\
 &\quad\quad+  \del{T^\frac{1}{2}}\left(\sum_{k=1}^\infty\|{((\partial_\xi g_k)\chi\eta\phi)}_{J}\|_{L^2_{\omega,t,x,\xi}}^2\right)^{1/2} \\
&\quad\quad  + \del{T^\frac{1}{2}}\left(\sum_{k=1}^\infty \|(g_k\chi\vartheta\phi)_{J}(\eta+|\eta'|)\|_{L^2_{\omega,t,x,\xi}}^2\right)^{1/2}+ \del{T^\frac{1}{2}}\left(\sum_{k=1}^\infty \|g_{k,J}(\cdot,0)\|_{L^2_x}^2\right)^{1/2}\\
&\quad\quad
 + \|\phi q_J\vartheta (\eta+|\eta'|)\|_{L^1_{\omega}\mathcal{M}_{t,x,\xi}}+ \|\partial_t \phi \chi_{J}\eta\|_{L^1_{\omega,t,x,\xi}} + \|\overline{\chi_J\phi\,\eta}\|_{L^2_{\omega,t,x}}\bigg].
 \end{aligned}
\end{align*}
Note that although all the above norms are global in time, i.e. $t\in(-\infty,\infty)$, the integrands are localized on $[0,T]$ due to the cut-off $\phi=\phi^\lambda$. 

\subsection*{Conclusion}
The real interpolation of two Lebesgue spaces is given by a Lorentz space (see \cite[Subsection 1.18.6, Theorem 1]{triebel}, namely,
$$
(L^2_{\omega,t,x},L^1_{\omega,t,x})_{\theta,\infty}=L^{\tilde r,\infty}_{\omega,t,x},\qquad \frac{1}{\tilde r}=\frac{1-\theta}{2}+\frac{\theta}{1}.
$$
In the case of a bounded domain, $L_{\omega,t,x}^{\tilde r,\infty}$ is embedded in the Lebesgue space $L^r_{\omega,t,x}$ whenever $\tilde r> r$, see \cite[Exercise 1.1.11]{grafakos}
\begin{equation*}
(L^2_{\omega,t,x},L^1_{\omega,t,x})_{\theta,\infty}\hookrightarrow L^r_{\omega,t,x},\qquad \frac{1}{r}>\frac{1-\theta}{2}+\frac{\theta}{1}.
\end{equation*}
Thus letting
$$s<\frac{\alpha^2\beta}{6(1+2\alpha)}$$ we deduce
\begin{equation}\label{eq:45}
\begin{aligned}
&\|\overline{\chi\phi\eta}\|_{L^{r}(\Omega\times[0,T]; W^{s,r}(\T))}
\\&
\lesssim 
\|\chi\phi\eta\|_{L^2_{\omega,t,x,\xi}}
 +  \del{T^\frac{1}{2}} \|\chi(0)\eta\|_{L^2_{\omega,x,\xi}} +  \del{T^\frac{1}{2}}\left(\sum_{k=1}^\infty\|((\partial_\xi g_k)\chi\eta\phi)\|_{L^2_{\omega,t,x,\xi}}^2\right)^{1/2} \\
&\quad\quad  + \del{T^\frac{1}{2}}\left(\sum_{k=1}^\infty \|g_k\chi\vartheta \phi(\eta+|\eta'|)\|_{L^2_{\omega,t,x,\xi}}^2\right)^{1/2}+ \del{T^\frac{1}{2}}\left(\sum_{k=1}^\infty \|g_{k}(\cdot,0)\|_{L^2_x}^2\right)^{1/2}\\
&\quad\quad
 + \|\phi\vartheta q(\eta+|\eta'|)\|_{L^1_{\omega}\mathcal{M}_{t,x,\xi}}+ \|\partial_t \phi \chi\eta\|_{L^1_{\omega,t,x,\xi}} + \|\overline{\chi\phi\,\eta}\|_{L^2_{\omega,t,x}}.
 \end{aligned}
\end{equation}
Since the constant in inequality \eqref{eq:45} is independent of $\lambda$, we may send $\lambda\to 0$. First, we observe that
$$\limsup_{\lambda\to0}\|\partial_t \phi^\lambda \chi\eta\|_{L^1_{\omega,t,x,\xi}}\lesssim\sup_{0\leq t\leq T}\|\chi\eta\|_{L^1_{\omega,x,\xi}}.$$
Using the dominated convergence theorem for the remaining terms,
we obtain the following estimate for  $u^\eta=\overline{\chi\,\eta}$
\begin{align*}
\|u^\eta\|_{L^{r}(\Omega\times[0,T]; W^{s,r}(\T))}
&\lesssim 
\|\chi\eta\|_{L^2_{\omega,t,x,\xi}}
 +   \|\chi(0)\eta\|_{L^2_{\omega,x,\xi}} + \left(\sum_{k=1}^\infty\|(\partial_\xi g_k) \chi\eta\|_{L^2_{\omega,t,x,\xi}}^2\right)^{1/2} \\
&\quad  + \left( \sum_{k=1}^\infty\|g_k\chi\vartheta(\eta+|\eta'|)\|_{L^2_{\omega,t,x,\xi}}^2\right)^{1/2}+ \del{T^\frac{1}{2}}\left(\sum_{k=1}^\infty \|g_{k}(\cdot,0)\|_{L^2_x}^2\right)^{1/2}\\
&\quad
 + \| q\vartheta(\eta+|\eta'|)\|_{L^1_{\omega}\mathcal{M}_{t,x,\xi}} + \sup_{0\leq t\leq T}\|\chi\eta\|_{L^1_{\omega,x,\xi}}+ \|\overline{\chi\eta}\|_{L^2_{\omega,t,x}}.
\end{align*}
It remains to estimate the right hand side in terms of the available bounds for the kinetic solution $u$ and the corresponding kinetic measure $m$. Note that this estimate will depend on the localization $\eta$. First, due to the definition of the equilibrium function $\chi$, it follows immediately that
\[
\|\chi\eta\|_{L^2_{\omega,t,x,\xi}}\lesssim_\eta \|u^\eta\|_{L^1_{\omega,t,x}}^{1/2},\qquad \|\chi(0)\eta\|_{L^2_{\omega,x,\xi}}\lesssim_\eta\|u_0^\eta\|_{L^1_{\omega,x}}^{1/2},
\]
\[
\sup_{0\leq t\leq T}\|\chi\eta\|_{L^1_{\omega,x,\xi}}=\sup_{0\leq t\leq T}\|u^\eta\|_{L^1_{\omega,x}},\qquad\|\overline{\chi\eta}\|_{L^2_{\omega,t,x}}\lesssim_\eta\|u^\eta\|_{L^1_{\omega,t,x}}^{1/2}.
\]
Second, similarly and due to \eqref{linrust} we have 
with $\varTheta_{\eta}$ such that $\varTheta_{\eta}' = (|\xi|^2+1)\vartheta^2(\xi)(\eta(\xi)+|\eta'|(\xi))$,
\begin{align*}
\left(\sum_{k=1}^\infty\|(\partial_\xi g_k) \chi\eta\|_{L^2_{\omega,t,x,\xi}}^2\right)^{1/2}+\left(\sum_{k=1}^\infty\|g_k\chi\vartheta(\eta+|\eta'|)\|_{L^2_{\omega,t,x,\xi}}^2\right)^{1/2}+&\left(\sum_{k=1}^\infty \|g_{k}(\cdot,0)\|_{L^2_x}^2\right)^{1/2}\\
&\quad\lesssim_\eta \|\varTheta_{\eta}(u) \|_{L^1_{\omega,t,x}}^{1/2}+1.
\end{align*}
Finally, since $q=m-\frac{1}{2}G^2\delta_{u=\xi}$, we deduce
\[
\|q\vartheta(\eta+|\eta'|)\|_{L^1_{\omega}\mathcal{M}_{t,x,\xi}}\lesssim_\eta\|m\vartheta(\eta+|\eta'|)\|_{L^1_\omega\mathcal{M}_{t,x,\xi}}+ 1.
\]
At this point it is worth noticing that all estimates were uniform in $\eta$ up to constants depending on $\|\eta\|_{C^1}$ and \eqref{eq:non-deg-local}.
Since $\varTheta_{\eta}(u) \ge u^\eta$ we conclude
   \begin{equation*}\begin{aligned}
   \|\bar\eta(u)\|_{L^{r}(\Omega\times[0,T]; W^{s,r}(\T))}
   \lesssim_\eta& 
     \|\bar\eta(|u_{0}|)\|^{1/2}_{L^1_{\omega,x}} + \|\varTheta_{\eta}(|u|) \|_{L^1_{\omega,t,x}}^{1/2}+\sup_{0\leq t\leq T}\|\bar\eta(|u|)\|_{L^1_{\omega,x}}\\
     &+ \| m\vartheta(\eta+|\eta'|)\|_{L^1_{\omega}\mathcal{M}_{t,x,\xi}} + 1.
   \end{aligned}\end{equation*}
with
\begin{align*} 
   \frac{1}{r}>\frac{1-\theta}{2}+\frac{\theta}{1} \quad \text{and}\quad
   s<\frac{\alpha^2\beta}{6(1+2\alpha)}
\end{align*}
which completes the proof.
\end{proof}

\section{Well-posedness}
\label{sec:L1}

In this section we present the proof of the main well-posedness result Theorem \ref{thm:main}. The uniqueness part of Theorem \ref{thm:main} will be proved in Theorem \ref{thm:uniqueness} below, the existence in Theorem \ref{thm:ex} below.

\subsection{Uniqueness}
\label{subsec:uniq}

In this section we prove a comparison results and thus uniqueness for kinetic solutions to \eqref{eq}. We emphasize that we do not assume any higher $L^p$ estimates for the kinetic solutions, thus providing a proof of uniqueness in the general $L^1$ setting. \blue{Recall that in this case, the kinetic measure is not finite and we only have a weak control of its decay for large $\xi$, cf.\ Definition \ref{meesL1}~(ii). Moreover, the corresponding chain rule can only be formulated in a weaker sense.} In addition, we only assume that $\sigma$ is locally H\"older continuous and $b'$ is locally bounded. In particular, no polynomial growth condition for $b'$ is required. This generalizes previous related uniqueness results given in \cite[Section 3]{dehovo}. The additional difficulties are resolved here by introducing an additional cutoff argument.

Analogously to \cite[Proposition 3.1]{dehovo}, \cite[Proposition 3.1]{degen} and \cite[Proposition 10]{DV} one may prove the existence of left and right continuous representatives for kinetic solutions.
\begin{prop}\label{prop:left-right}
  Let $u$ be a kinetic solution to \eqref{eq}. Then, $f=1_{u>\xi}$ admits representatives $f^-$ and $f^+$ which are almost surely left- and right-continuous, respectively, at all points $t^* \in [0,T]$ in the sense of distributions over $\mt \times \R$. More precisely, for all $t^* \in [0, T ]$ there exist kinetic functions $f^{*,\pm}$ on $\Omega\times\mt\times\R$ such that setting $f^\pm(t^*)=f^{*,\pm}$ yields $f^\pm = f$ almost everywhere and 
    $$ \bl f^\pm(t^*\pm\varepsilon),\psi \br \to \bl f^\pm(t^*),\psi\br\quad\varepsilon\downarrow 0,\ \forall \psi \in C^2_c(\mt\times\R),\ \p\text{-a.s.},  $$
  where the zero set does not depend on $\psi$ nor $t^*$.
  Moreover, there is a countable set $Q\subseteq [0,T]$ such that $\p$-a.s.\ for all $t^*\in[0,T]\setminus Q$ we have $f^+(t^*) = f^-(t^*)$.
\end{prop}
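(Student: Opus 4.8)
The strategy is the standard one for producing càglàd/càdlàg representatives of kinetic solutions, adapted to the present weak setting. The starting point is the kinetic equation \eqref{eq:kinformulL1}: for a fixed test function $\psi \in C^2_c(\mt\times\R)$, taking $\varphi(t,x,\xi) = \rho(t)\psi(x,\xi)$ with $\rho$ smooth and compactly supported in $[0,T)$, the identity says that the function
$$ t \mapsto J_\psi(t) := \langle f(t),\psi\rangle $$
has a distributional time derivative given by the sum of a bounded-variation term (coming from the drift $\langle f, b\cdot\nabla\psi\rangle$, the second-order term $\langle f, A:D^2\psi\rangle$, the It\^o correction $\tfrac12\langle G^2\delta_{u=\xi},\partial_\xi\psi\rangle$, and the measure term $m(\partial_\xi\psi)$) plus the genuinely stochastic term $\sum_k \int_0^\cdot \int_{\mt} g_k \psi(x,u)\,\dd x\,\dd\beta_k$. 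More precisely, one shows that there is a process $t\mapsto \tilde J_\psi(t)$ which is a semimartingale — sum of a continuous martingale, a continuous finite-variation part, and the monotone (hence BV) process $m(\rho_{[0,t]}\partial_\xi\psi)$ — and which agrees with $J_\psi$ for a.e.\ $t$, $\p$-a.s. A semimartingale has a version with left and right limits everywhere, so $\tilde J_\psi$ admits left- and right-continuous modifications $\tilde J_\psi^{\pm}$.

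The next step is to make this choice of representative uniform in $\psi$. Fix a countable set $\mathcal{D}\subset C^2_c(\mt\times\R)$ which is dense in $C^2_c(\mt\times\R)$ for a suitable (say $C^2$-local) topology and closed under rational linear combinations. On a single $\p$-null set, every $\tilde J_\psi$, $\psi\in\mathcal{D}$, has left and right limits at every $t^*\in[0,T]$, and $\psi\mapsto \tilde J_\psi^{\pm}(t^*)$ is $\R$-linear and bounded with norm $\le \|\psi\|_{L^\infty}$ (since $0\le f\le 1$ and $|\mt|<\infty$). Hence for each $t^*$ and each sign it extends to a bounded linear functional on $C^0(\mt\times\R)$ (after the density argument), i.e.\ a finite signed measure; one checks it is a kinetic function $f^{*,\pm}$ by verifying monotonicity and the normalisations $f^{*,\pm}(\xi)\to 1$, $0$ as $\xi\to\mp\infty$ in the appropriate averaged sense. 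Setting $f^{\pm}(t^*):=f^{*,\pm}$ gives representatives with the claimed one-sided continuity in the sense of distributions, with the exceptional null set independent of $\psi$ and $t^*$. Finally, the countable set $Q$ is obtained as the union over $\psi\in\mathcal D$ of the (countable) jump sets of the BV/semimartingale parts — in fact the only possible jumps come from atoms of the kinetic measure $m$ in the time variable, and since $\E\, m$ has finite mass on $[0,T]\times\mt\times D$ for each compact $D$ (by Definition \ref{meesL1}, though here only local-in-$\xi$ finiteness is needed after testing against compactly supported $\psi$), the set of times carrying such atoms is at most countable; off $Q$ the left and right limits coincide, so $f^+(t^*)=f^-(t^*)$.

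The main obstacle is the passage from the countable dense family $\mathcal D$ to all $\psi\in C^2_c$ while keeping a single null set: one must argue that the map $\psi \mapsto J_\psi^\pm(t^*)$, a priori defined only on $\mathcal D$, extends continuously, and that this extension is compatible with the equation. This is where the uniform bound $|J_\psi(t)|\le \|\psi\|_{L^\infty}\,|\mt|$ is essential, together with control of the measure term: one needs $m(\rho\,|\partial_\xi\psi|)<\infty$ $\p$-a.s.\ uniformly over $\psi$ in bounded subsets of $C^2_c$ with support in a fixed compact, which follows from the local finiteness of $m$. The other delicate point, already handled the same way in \cite[Proposition 3.1]{dehovo}, \cite[Proposition 3.1]{degen}, \cite[Proposition 10]{DV}, is checking that the weak limits $f^{*,\pm}$ are genuinely kinetic functions (equivalently, that the associated Young measures vanish at infinity), which here uses only the weak decay \eqref{infinityL1} of $m$ rather than its finiteness; the argument is otherwise identical to the cited references, so I would simply invoke it.
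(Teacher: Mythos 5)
The paper does not give a self-contained proof of Proposition \ref{prop:left-right}; it refers to \cite[Proposition 3.1]{dehovo}, \cite[Proposition 3.1]{degen}, \cite[Proposition 10]{DV}, and your argument reconstructs exactly the strategy used there: test \eqref{eq:kinformulL1} against $\rho(t)\psi(x,\xi)$, identify $t\mapsto\langle f(t),\psi\rangle$ (up to a null set in $t$) with a semimartingale with c\`adl\`ag/c\`agl\`ad modifications, pass to a countable dense family of test functions to fix a single null set, and check the resulting weak-star limits are kinetic functions. This is the right approach and matches the cited sources.

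Two minor points worth tightening for the $L^1$-setting of this paper. First, the bound you state, $|\langle f(t),\psi\rangle|\le\|\psi\|_{L^\infty}|\mt|$, is off by the diameter of the $\xi$-support: since the $\xi$-integral is over $\R$ with $0\le f\le 1$, the correct uniform bound is $|\langle f(t),\psi\rangle|\le\|\psi\|_{L^1(\mt\times\R)}$ (equivalently $C(R)\|\psi\|_{L^\infty}$ for $\supp_\xi\psi\subset[-R,R]$); this matters for the functional-analytic extension step, where the natural continuity is with respect to $\|\cdot\|_{L^1}$ on compactly supported $\psi$, not $\|\cdot\|_{L^\infty}$ on $C_0(\mt\times\R)$. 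Second, you correctly note that $m$ is only locally finite in $\xi$ here (it is a.s.\ a Radon measure on $[0,T]\times\mt\times\R$, so finite on any $[0,T]\times\mt\times D$ with $D$ compact), and that the verification that the one-sided limits are genuinely kinetic functions must rely on the weaker decay condition of Definition \ref{meesL1} (ii) rather than global finiteness of $m$; this is the only place where the argument departs in substance from the $L^p$ references, and it is worth spelling out that the vanishing-at-infinity property of the Young measures follows from the a priori bound of Proposition \ref{prop:Lp} (or, at the approximation level, from the equi-integrability estimates of Proposition \ref{prop:decay1}) rather than from the kinetic measure alone.
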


Regarding the doubling of the variables \cite[Proposition 3.2]{dehovo} we make use of the following version, Proposition \ref{prop:doubling} below, which is more suitable for the $L^1$-setting.

Let $(\varrho_\varepsilon),\,(\psi_\delta)$ be standard Dirac sequences on $\mt$ and $\mr$, respectively. That is, let $\psi\in C^\infty_c(\mr)$ be symmetric nonnegative function such that
$\int_\mr\psi=1, \supp\psi\subset [-1,1]$
and set
$$\psi_\delta(\xi)=\frac{1}{\delta}\,\psi\Big(\frac{\xi}{\delta}\Big).$$
To define $(\varrho_\varepsilon)$, consider a nonnegative symmetric function $\tilde\varrho\in C^\infty_c(\R^N)$ satisfying
$
\int_{\R^N}\tilde\varrho =1$, $\supp\varrho\subset B(0,\pi)$ and let $\varrho$ denote its $2\pi\mathbb{Z}^N$-periodic modification. Then let
$$\varrho_\varepsilon(x)=\frac{1}{\varepsilon^N}\,\varrho\Big(\frac{x}{\varepsilon}\Big).$$
Let us now define a sequence of smooth cut-off functions $(K_\ell)$ as follows: Let $K\in C^\infty(\mr)$ be such that $0\leq K(\xi)\leq 1$, $K\equiv 1$ if $|\xi|\leq 1$, $K\equiv 0$ if $|\xi|\geq 2$, and $|K'(\xi)|\leq 1.$ Define
$$K_\ell(\xi):=K\Big(\frac{\xi}{2^\ell}\Big),\qquad \ell\in \mn.$$
Then, clearly,
$$|K'_\ell(\xi)|\leq \frac{1}{2^\ell}\ind_{2^\ell\leq |\xi|\leq 2^{\ell+1}}.$$

\begin{prop}[Doubling of variables]\label{prop:doubling}
Let $u_1,u_2$ be kinetic solutions to \eqref{eq}. Denote $f_1=\ind_{u_1>\xi},$ $f_2=\ind_{u_2>\xi}$ with the corresponding Young measures $\nu^1=\delta_{u^1},$ $\nu^2=\delta_{u^2}$, respectively. Then for all $t\in[0,T]$ we have
\begin{equation*}\label{doubling}
\begin{split}
\stred&\int_{(\mt )^2}\int_{\mr^3}\varrho_\varepsilon(x-y)K_\ell(\eta)\psi_\delta(\eta-\xi)\psi_\delta(\eta-\zeta)f_1^{\pm}(x,t,\xi)\bar{f}_2^{\pm}(y,t,\zeta)\,\dif\xi\,\dif\zeta\,\dif x\,\dif y\,\dif\eta\\
&\leq\stred\int_{(\mt )^2}\int_{\mr^3}\varrho_\varepsilon(x-y)K_\ell(\eta)\psi_\delta(\eta-\xi)\psi_\delta(\eta-\zeta)f_{1,0}(x,\xi)\bar{f}_{2,0}(y,\zeta)\,\dif\xi\,\dif\zeta\,\dif x\,\dif y\,\dif \eta\\
&\qquad+\mathrm{I}+\mathrm{J}+\mathrm{K}+\mathrm{L}(\delta, \ell),
\end{split}
\end{equation*}
where
$$\mathrm{I}=\,\stred\int_0^t\int_{(\mt )^2}\int_{\mr^3}f_1\bar{f}
_2\big(b(\xi)-b(\zeta)\big)\cdotp\nabla_x\varrho_\varepsilon(x-y)K_\ell(\eta)\psi_\delta(\eta-\xi)\psi_\delta(\eta-\zeta)\,\dif\xi\,
\dif\zeta\,\dif x\, \dif y\,\dif\eta\,\dif s,$$
\begin{equation*}
\begin{split}
\mathrm{J}&=\stred\int_0^t\int_{(\mt )^2}\int_{\mr^3}f_1\bar{f}
_2\big(A(\xi)+A(\zeta)\big):\totdif^2_x\varrho_\varepsilon(x-y)K_\ell(\eta)\psi_\delta(\eta-\xi)\psi_\delta(\eta-\zeta)\,
\dif\xi\,\dif\zeta\,\dif x\, \dif y\,\dif\eta\,\dif s\\
&\qquad\blue{-\stred\int_0^t\int_{(\mt )^2}\int_{\mr^3}\varrho_\varepsilon(x-y)K_\ell(\eta)\psi_\delta(\eta-\xi)\,\dif\nu^{1}_{x,s}
(\xi)\,\dif x\,\dif n^{{\psi_\delta(\eta-\cdot)}}_{2}(y,s)\,\dif\eta}\\
&\qquad\blue{-\stred\int_0^t\int_{(\mt )^2}\int_{\mr^3}\varrho_\varepsilon(x-y)K_\ell(\eta)\psi_\delta(\eta-\zeta)\,
\dif\nu^{2}_{y,s}
(\zeta)\,\dif y\,\dif n^{{\psi_\delta(\eta-\cdot)}}_{1}(x,s)\,\dif\eta,}
\end{split}
\end{equation*}
$$\mathrm{K}=\frac{1}{2}\stred\int_0^t\!\!\int_{(\mt )^2}\!\!\int_{\mr^3}
\!\!\varrho_\varepsilon(x-y)K_\ell(\eta)\psi_\delta(\eta-\xi)\psi_\delta(\eta-\zeta)\!\sum_{k\geq1}\!\big|g_k(x,\xi)-g_k(y,
\zeta)\big|^2\dif\nu^1_{x,s}(\xi)\dif\nu^2_{y,s}(\zeta)\dif x\,\dif y\,\dif\eta\,\dif s,$$
$$\lim_{\ell\to\infty}\lim_{\delta\to0}\mathrm{L}(\delta, \ell)=0.$$

\begin{proof}
A similar approach as in \cite[Proposition 3.2]{degen} and \cite[Proposition 9]{debus} yields for
$$\alpha(x,\xi,y,\zeta,\eta):=\varrho_\varepsilon(x-y)K_\ell(\eta)\psi_\delta(\eta-\xi)\psi_\delta(\eta-\zeta)$$
that
\begin{equation*}
\begin{split}
\stred\int_{(\mt)^2}\int_{\mr^3} f_1^+(t)&\bar{f}_2^+(t)\alpha\,\dif\xi\,\dif\zeta\,\dif x\, \dif y\,\dif \eta=\stred\int_{(\mt)^2}\int_{\mr^3}  f_{1,0}\bar{f}_{2,0}\alpha\,\dif\xi\,\dif\zeta\,\dif x\, \dif y\,\dif \eta\\
&+\stred\int_0^t\int_{(\mt )^2}\int_{\mr^3}f_1\bar{f}_2\big(b(\xi)-b(\zeta)\big)\cdotp\nabla_x\alpha\,\dif\xi\,\dif\zeta\,\dif x\, \dif y\,\dif \eta\,\dif s\\
&+\stred\int_0^t\int_{(\mt )^2}\int_{\mr^3}f_1\bar{f}_2\,A(\zeta):\totdif^2_y\alpha\,\dif\xi\,\dif\zeta\,\dif x\, \dif y\,\dif \eta\,\dif s\\
&+\stred\int_0^t\int_{(\mt )^2}\int_{\mr^3}f_1\bar{f}_2\,A(\xi):\totdif^2_x\alpha\,\dif\xi\,\dif\zeta\,\dif x\, \dif y\,\dif \eta\,\dif s\\
&+\frac{1}{2}\stred\int_0^t\int_{(\mt )^3}\int_{\mr^2}\bar{f}_2\partial_\xi\alpha\, G^2_1\,\dif\nu_{x,s}^1(\xi)\,\dif\zeta\,\dif y\,\dif x\,\dif \eta\,\dif s\\
&-\frac{1}{2}\stred\int_0^t\int_{(\mt )^3}\int_{\mr^2}f_1\partial_\zeta\alpha\, G^2_2\,\dif\nu_{y,s}^2(\zeta)\,\dif\xi\,\dif y\,\dif x\,\dif \eta\,\dif s\\
&-\stred\int_0^t\int_{(\mt )^2}\int_{\mr^3}G_{1,2}\alpha\,\dif\nu^1_{x,s}(\xi)\,\dif\nu^2_{y,s}(\zeta)\,\dif x\,\dif y\,\dif \eta\,\dif s\\
&-\stred\int_0^t\int_{(\mt )^2}\int_{\mr^3}\bar{f}_2^-\partial_\xi\alpha\,\dif m_1(x,s,\xi)\,\dif\zeta\,\dif \eta\,\dif y\\
&+\stred\int_0^t\int_{(\mt )^2}\int_{\mr^3}f_1^+\partial_\zeta\alpha\,\dif m_2(y,s,\zeta)\,\dif\xi\,\dif x\,\dif \eta=:I_1+\cdots+ I_9.
\end{split}
\end{equation*}
We need to treat the last five terms $I_5,\dots,I_9$. Using the fact that
\begin{align*}
&\int_\mr\psi_\delta(\eta-\xi)\partial_\zeta\psi_\delta(\eta-\zeta)K_\ell(\eta)\,\dif \eta\\
&\qquad=-\int_\mr\partial_\xi\psi_\delta(\eta-\xi)\psi_\delta(\eta-\zeta)\,\dif \eta+\int_\mr\psi_\delta(\eta-\xi)\psi_\delta(\eta-\zeta)K'_\ell(\eta)\,\dif \eta
\end{align*}
\blue{together with the decomposition of the kinetic measure $m_2$ from Definition \ref{kinsolL1}~(iii),}
it follows
\begin{equation*}
\begin{split}
I_9&
\blue{\leq-\stred\int_0^t\int_{(\mt )^2}\int_{\mr^3}\varrho_\varepsilon(x-y)K_\ell(\eta)\psi_\delta(\eta-\xi)\,\dif\nu^{1}_{x,s}
(\xi)\,\dif x\,\dif n^{{\psi_\delta(\eta-\cdot)}}_{2}(y,s)\,\dif\eta}\\
&\qquad+\frac{1}{2^\ell}\stred\int_0^t\int_{\mt }\int_{\mr}\ind_{2^\ell-\delta\leq|\zeta|\leq 2^{\ell+1}+\delta}\,\dif m_2(y,s,\zeta),
\end{split}
\end{equation*}
where, according to Definition \ref{meesL1} (ii), the second term on the right hand side vanishes if we let $\delta\to0$ and then $\ell\to\infty$. By symmetry
\begin{equation*}
\begin{split}
I_8&\leq\blue{-\stred\int_0^t\int_{(\mt )^2}\int_{\mr^3}\varrho_\varepsilon(x-y)K_\ell(\eta)\psi_\delta(\eta-\zeta)\,
\dif\nu^{2}_{y,s}
(\zeta)\,\dif y\,\dif n^{{\psi_\delta(\eta-\cdot)}}_{1}(x,s)\,\dif\eta}\\
&\qquad+\frac{1}{2^\ell}\stred\int_0^t\int_{\mt }\int_{\mr}\ind_{2^\ell-\delta\leq|\xi|\leq 2^{\ell+1}+\delta}\,\dif m_1(x,s,\xi).
\end{split}
\end{equation*}
Next, we have
\begin{align*}
&I_5+I_6+I_7=K\\
&\quad+\frac{1}{2}\E\int_0^t\int_{(\mt)^2}\int_{\mr^3}\bar{f}_2\varrho_\varepsilon(x-y)\psi_\delta(\eta-\xi)\psi_\delta(\eta-\zeta)K'_\ell(\eta)G^2_1\,\dif\nu^1_{s,x}(\xi)\,\dif x\,\dif\zeta\,\dif y\,\dif\eta\,\dif s\\
&\quad-\frac{1}{2}\stred\int_0^t\int_{(\mt )^2}\int_{\mr^3}f_1\,\varrho_\varepsilon(x-y)\psi_\delta(\eta-\xi)\psi_\delta(\eta-\zeta)K'_\ell(\eta) G^2_2\,\dif\nu_{y,s}^2(\zeta)\,\dif\xi\,\dif y\,\dif x\,\dif \eta\,\dif s\\
&=:K+I_{51}+I_{61}
\end{align*}
and due to \eqref{linrust}
\begin{align*}
I_{51}&\leq \frac{D}{2^{\ell}}\E\int_0^t\int_{(\mt)^2}\int_{\mr^3}\varrho_\varepsilon(x-y)\psi_\delta(\eta-\xi)\psi_\delta(\eta-\zeta)\ind_{2^\ell-\delta\leq|\xi|\leq 2^{\ell+1}+\delta}\,\dif\nu^1_{s,x}(\xi)\,\dif x\,\dif\zeta\,\dif y\,\dif\eta\,\dif s\\
&\quad+\frac{D}{2^{\ell}}\E\int_0^t\int_{(\mt)^2}\int_{\mr^3}\varrho_\varepsilon(x-y)\psi_\delta(\eta-\xi)\psi_\delta(\eta-\zeta)\ind_{2^\ell-\delta\leq|\xi|\leq 2^{\ell+1}+\delta}|\xi|^2\,\dif\nu^1_{s,x}(\xi)\,\dif x\,\dif\zeta\,\dif y\,\dif\eta\,\dif s\\
&\leq \frac{D}{2^{\ell}}\E\int_0^t\int_{\mt}\int_{\mr}\ind_{2^\ell-\delta\leq|\xi|\leq 2^{\ell+1}+\delta}\,\dif\nu^1_{s,x}(\xi)\,\dif x\,\dif s\\
&\quad+\frac{D}{2^{\ell}}\E\int_0^t\int_{\mt}\int_{\mr}\ind_{2^\ell-\delta\leq|\xi|\leq 2^{\ell+1}+\delta}|\xi|^2\,\dif\nu^1_{s,x}(\xi)\,\dif x\,\dif s.
\end{align*}
We further note that
\begin{align*}
   &\frac{D}{2^{\ell}}\E\int_0^t\int_{\mt}\int_{\mr}\ind_{2^\ell-\delta\leq|\xi|\leq 2^{\ell+1}+\delta}|\xi|^2\,\dif\nu^1_{s,x}(\xi)\,\dif x\,\dif s \\
   &\le D\frac{2^{\ell+1}+\delta}{2^{\ell}}\E\int_0^t\int_{\mt}\int_{\mr}\ind_{2^\ell-\delta\leq|\xi|\leq 2^{\ell+1}+\delta}|\xi|\,\dif\nu^1_{s,x}(\xi)\,\dif x\,\dif s \\
   &\le D\frac{2^{\ell+1}+\delta}{2^{\ell}}\E\int_0^t\int_{\mt}\ind_{2^\ell-\delta\leq|u^1(s,x)|}|u^1(s,x)|\,\dif x\,\dif s.
\end{align*}
Hence,
\begin{align*}
  \limsup_{\delta\to 0} I_{51}
  &\leq \frac{D}{2^{\ell}}t + D\E\int_0^t\int_{\mt}\ind_{2^\ell\leq|u^1(s,x)|}|u^1(s,x)|\,\dif x\,\dif s.
\end{align*}
By dominated convergence this implies that
\begin{align*}
  \lim_{\ell\to\infty}\lim_{\delta\to 0} I_{51}=0
\end{align*}
and $I_{61}$ may be treated analogously, which completes the proof.
\end{proof}
\end{prop}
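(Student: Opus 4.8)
The plan is to derive an exact It\^o identity for the doubled quantity and then to dispose of the error terms generated by the velocity cut-off $K_\ell$. Concretely, I would test the kinetic formulation \eqref{eq:kinformulL1} of $u_1$ against $\varphi(t,x,\xi)=\varrho_\varepsilon(x-y)K_\ell(\eta)\psi_\delta(\eta-\xi)\psi_\delta(\eta-\zeta)$ with $(y,\zeta,\eta)$ frozen, test the kinetic formulation of $\bar f_2=1-\ind_{u_2>\xi}$ in the same way with the roles of the variables interchanged, apply It\^o's product formula to the two resulting real-valued semimartingales, and integrate the outcome in $(y,\zeta,\eta)$. Using the left- and right-continuous representatives of Proposition \ref{prop:left-right} to make sense of the time-$t$ values, and proceeding as in \cite[Proposition 3.2]{degen}, \cite[Proposition 9]{debus} (but keeping $K_\ell$ throughout so that all integrals stay finite in spite of $m_1,m_2$ being infinite), one obtains, with $\alpha$ the tensorized test function,
\[
\E\int f_1^\pm(t)\bar f_2^\pm(t)\,\alpha = \E\int f_{1,0}\bar f_{2,0}\,\alpha + I_1+\cdots+I_9,
\]
where $I_1$ is the initial-datum term, $I_2$ the transport ($b(\xi)-b(\zeta)$) term, $I_3$ and $I_4$ the two second-order terms carrying $\totdif^2_y\alpha$ and $\totdif^2_x\alpha$, $I_5,I_6$ the contributions of the $\tfrac12 G^2\partial_\xi\varphi$ correction in \eqref{eq:kinformulL1}, $I_7$ the noise quadratic-variation (cross) term, and $I_8,I_9$ the kinetic-measure terms (paired with the one-sided representatives $\bar f_2^-$ and $f_1^+$, respectively). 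Since $\varrho_\varepsilon$ depends only on $x-y$ one has $\totdif^2_y\alpha=\totdif^2_x\alpha$, so $I_2,I_3,I_4$ assemble precisely into $\mathrm I$ and the first line of $\mathrm J$.

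The heart of the argument is the treatment of $I_5,\dots,I_9$, which is where the $L^1$ setting departs from \cite{dehovo}. For $I_8,I_9$ I would integrate by parts in $\eta$ (using $\partial_\zeta\psi_\delta(\eta-\zeta)=-\partial_\eta\psi_\delta(\eta-\zeta)$) so as to trade the $\zeta$-derivative hitting $\psi_\delta$ for a $\xi$-derivative plus a term carrying the factor $K'_\ell(\eta)$; the $\xi$-derivative piece is then handled via the decomposition $m_i\ge n^{\psi_\delta(\eta-\cdot)}_i$ of Definition \ref{kinsolL1}(iii), which produces the two negative parabolic-dissipation contributions appearing in $\mathrm J$, the remaining non-cancelled parabolic part being nonnegative and thus discardable --- this is the origin of the inequality ``$\le$'' in the statement. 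The leftover $K'_\ell$-term is supported, up to the $\delta$-thickening of the mollifier, on $2^\ell\le|\xi|\le2^{\ell+1}$ and is dominated by $\tfrac{1}{2^\ell}\E\int_0^t\int_{\mt}\int_\mr\ind_{2^\ell-\delta\le|\xi|\le2^{\ell+1}+\delta}\,\dif m_i$, which tends to $0$ upon sending $\delta\to0$ and then $\ell\to\infty$ by Definition \ref{meesL1}(ii). Analogously, combining $I_5,I_6,I_7$ and integrating by parts in $\eta$ produces $\mathrm K$ plus two error terms $I_{51},I_{61}$ carrying $K'_\ell$; bounding $G^2_i(x,\xi)\le 2D(1+|\xi|^2)$, $|K'_\ell|\le2^{-\ell}\ind_{2^\ell\le|\xi|\le2^{\ell+1}}$ and using $|\xi|^2\le(2^{\ell+1}+\delta)|\xi|$ on the support, these are controlled by $\tfrac{D}{2^\ell}t + D\,\E\int_0^t\int_{\mt}\ind_{2^\ell\le|u^1(s,x)|}|u^1(s,x)|\,\dif x\,\dif s$ (and symmetrically), hence vanish by dominated convergence since $u^1\in L^1(\Omega\times[0,T]\times\mt)$.

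Collecting all of these vanishing contributions into a single term $\mathrm L(\delta,\ell)$ yields the claimed inequality together with $\lim_{\ell\to\infty}\lim_{\delta\to0}\mathrm L(\delta,\ell)=0$. I expect the main obstacle to be the bookkeeping for the kinetic-measure terms $I_8,I_9$: Definition \ref{kinsolL1}(iii) only furnishes the one-sided bound $m(\varphi\phi)\ge n^\phi(\varphi)$ for \emph{nonnegative} $\varphi,\phi$, and $n^\phi$ need not be a measure in the $\xi$-variable, so these cross terms must be split so that $m_i$ is always paired against a nonnegative test function while the discarded remainder is checked to carry a definite sign --- the interplay between this decomposition, the mollifier $\psi_\delta$, and the cut-off $K_\ell$ is the delicate point, and it is precisely what replaces the simpler argument available in the $L^2$ framework. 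A secondary, more technical, point is the rigorous justification of the It\^o product formula at the level of the distributional formulation \eqref{eq:kinformulL1}: one mollifies $f_1$ and $\bar f_2$ in $(x,\xi)$, applies It\^o after pairing, and passes to the limit, the presence of $K_\ell$ ensuring finiteness of all the integrals that appear.
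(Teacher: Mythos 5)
Your proposal follows essentially the same route as the paper: you arrive at the same nine-term Itô identity via the doubling procedure of \cite{degen,debus}, you isolate $I_5,\dots,I_9$, you integrate by parts in $\eta$ (equivalently the identity $\partial_\zeta\psi_\delta(\eta-\zeta)=-\partial_\eta\psi_\delta(\eta-\zeta)$) to trade the $\zeta$-derivative for a $\xi$-derivative plus a $K_\ell'$ remainder, you invoke the one-sided measure comparison $m_i(\varphi\phi)\ge n_i^\phi(\varphi)$ from Definition \ref{kinsolL1}~(iii) to produce the negative parabolic-dissipation contributions in $\mathrm J$, and you control the $K_\ell'$-remainders by $\tfrac{1}{2^\ell}\E\,m_i(A_{2^\ell\mp\delta})$ and by $G_i^2\lesssim 1+|\xi|^2$ plus dominated convergence in $L^1$, exactly as in the paper. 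The delicate points you flag (sign bookkeeping so that $m_i$ is always tested against a nonnegative function, and the finiteness role of $K_\ell$) are precisely the ones the paper's proof navigates, so there is nothing to add.
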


Finally, we have all in hand to prove the comparison principle leading to the proof of uniqueness as well as continuous dependence on the  initial condition.

\begin{thm}[Comparison principle]\label{thm:uniqueness}
Let $u$ be a kinetic solution to \eqref{eq}. Then there exist $u^+$ and $u^-$, representatives of $u$, such that, for all $t\in[0,T]$, $f^\pm(t,x,\xi)=\ind_{u^{\pm}(t,x)>\xi}$ for a.e. $(\omega,x,\xi)$, where $f^\pm$ are as in Proposition \ref{prop:left-right}.

Moreover, if $\,u_1,u_2$ are kinetic solutions to \eqref{eq} with initial data $u_{1,0}, u_{2,0}$, respectively, then,
\begin{equation}\label{comparison}
 \sup_{t\in[0,T]}\stred\|(u_1^{\pm}(t)-u_2^{\pm}(t))^+\|_{L^1(\mt )}\leq \stred\|(u_{1,0}-u_{2,0})^+\|_{L^1(\mt )}.
\end{equation}

\begin{proof}
Using Proposition \ref{prop:doubling} we have
\begin{equation*}
\begin{split}
\stred&\int_{\mt }\int_\mr f_1^{\pm}(x,t,\xi)\bar{f}_2^{\pm}(x,t,\xi)\,\dif\xi\,\dif x\\
&=\stred\int_{(\mt )^2}\int_{\mr^3}\varrho_\varepsilon(x-y)K_\ell(\eta)\psi_\delta(\eta-\xi)\psi_\delta(\eta-\zeta)f_1^{\pm}(x,t,\xi)\bar{f}_2^{\pm}(y,t,\zeta)\,\dif\xi\,\dif \zeta\,\dif x \,\dif y+\eta_t(\varepsilon,\delta,\ell)\\
&\leq\stred\int_{(\mt )^2}\int_{\mr^3}\varrho_\varepsilon(x-y)K_\ell(\eta)\psi_\delta(\eta-\xi)\psi_\delta(\eta-\zeta)f_{1,0}(x,\xi)\bar{f}_{2,0}(y,\zeta)\,\dif\xi\,\dif\zeta\,\dif x\,\dif y\,\dif \eta\\
&\qquad+\mathrm{I}+\mathrm{J}+\mathrm{K}+\mathrm{L}(\delta, \ell)+\eta_t(\varepsilon,\delta,\ell),
\end{split}
\end{equation*}
with $\mathrm{I}$, $\mathrm{J}$, $\mathrm{K}$ as in Proposition \ref{prop:doubling} and
   $$\lim_{\ell\to\infty}\lim_{\varepsilon,\delta\rightarrow 0}\eta_t(\varepsilon,\delta,\ell)=0.$$
We aim to find suitable bounds for the terms $\mathrm{I},\,\mathrm{J},\,\mathrm{K}$.

Since $b'$ is locally bounded, setting $\|b'\|_{\infty,\delta,\ell}:= \|b'\|_{L^\infty(-2^{\ell+1}-\delta,2^{\ell+1}+\delta)}$, we have
\begin{align*}
|\mathrm{I}|
&\leq \|b'\|_{\infty,\delta,\ell}
\stred\int_0^t\int_{(\mt )^2}\int_{\mr^3}f_1\bar{f}_2|\xi-\zeta|\psi_\delta(\eta-\zeta)\psi_\delta(\eta-\xi)K_\ell(\eta)\,\dif\xi\,\dif \zeta\,\big|\nabla_x\varrho_\varepsilon(x-y)\big|\dif x\,\dif y\,\dif\eta\,\dif s\\
&\leq 2\delta\|b'\|_{\infty,\delta,\ell}
\stred\int_0^t\int_{(\mt )^2}\int_{\mr^3}\psi_\delta(\eta-\zeta)\psi_\delta(\eta-\xi)K_\ell(\eta)\,\dif\xi\,\dif \zeta\,\big|\nabla_x\varrho_\varepsilon(x-y)\big|\dif x\,\dif y\,\dif\eta\,\dif s\\
&\leq 8 \,2^\ell|\mt|^2\varepsilon^{-1}\delta\|b'\|_{\infty,\delta,\ell}t.
\end{align*}

In order to estimate the term $\mathrm{J}$, we observe that
\begin{equation*}
\begin{split}
\mathrm{J}&=\stred\int_0^t\int_{(\mt )^2}\int_{\mr^3}f_1\bar{f}
_2\,\big(\sigma(\xi)-\sigma(\zeta)\big)^2:\totdif^2_x\varrho_\varepsilon(x-y)K_\ell(\eta)\psi_\delta(\eta-\xi)\psi_\delta(\eta-\zeta)\,
\dif\xi\,\dif\zeta\,\dif x\, \dif y\,\dif \eta\,\dif s\\
&\quad+2\,\stred\int_0^t\int_{(\mt )^2}\int_{\mr^3}f_1\bar{f}
_2\,\sigma(\xi)\sigma(\zeta):\totdif^2_x\varrho_\varepsilon(x-y)K_\ell(\eta)\psi_\delta(\eta-\xi)\psi_\delta(\eta-\zeta)\,
\dif\xi\,\dif\zeta\,\dif x\, \dif y\,\dif \eta\,\dif s\\
&\quad\blue{-\stred\int_0^t\int_{(\mt )^2}\int_{\mr^3}\varrho_\varepsilon(x-y)K_\ell(\eta)\psi_\delta(\eta-\xi)\,\dif\nu^{1}_{x,s}
(\xi)\,\dif x\,\dif n^{{\psi_\delta(\eta-\cdot)}}_{2}(y,s)\,\dif\eta}\\
&\quad\blue{-\stred\int_0^t\int_{(\mt )^2}\int_{\mr^3}\varrho_\varepsilon(x-y)K_\ell(\eta)\psi_\delta(\eta-\zeta)\,
\dif\nu^{2}_{y,s}
(\zeta)\,\dif y\,\dif n^{{\psi_\delta(\eta-\cdot)}}_{1}(x,s)\,\dif\eta,}\\
&=\mathrm{J}_1+\mathrm{J}_2+\mathrm{J}_3+\mathrm{J}_4.
\end{split}
\end{equation*}
Since $\sigma$ is locally $\gamma$-H\"older continuous due to \eqref{sigma}, it holds
\begin{equation*}
\begin{split}
|\mathrm{J}_1|&\leq Ct\delta^{2\gamma}\varepsilon^{-2}\|\sigma\|_{C^\gamma([-2^{\ell+1}-\delta,2^{\ell+1}+\delta])}.
\end{split}
\end{equation*}
Next, we will show that 
\begin{equation}\label{eq:Js}
  \mathrm{J}_2+\mathrm{J}_3+\mathrm{J}_4\leq 0
\end{equation}
From the definition of the parabolic dissipation in Definition \ref{kinsolL1}, we have 
\begin{equation*}
\begin{split}
\mathrm{J}_3+\mathrm{J}_4&=-\,\stred\int_0^t\int_{(\mt )^2}\int_\mr\varrho_\varepsilon(x-y)K_\ell(\eta)\psi_\delta(\eta-u_1)\bigg|\diver_y\int^{u_2}_0\sqrt{\psi_\delta(\eta-\zeta)}\sigma(\zeta)\,\dif\zeta\bigg|^2\dif x\, \dif y\,\dif \eta\,\dif s\\
&\qquad-\,\stred\int_0^t\int_{(\mt )^2}\int_\mr\varrho_\varepsilon(x-y)K_\ell(\eta)\psi_\delta(\eta-u_2)\bigg|\diver_x\int^{u_1}_0\sqrt{\psi_\delta(\eta-\xi)}\sigma(\xi)\,\dif\xi\bigg|^2\dif x\, \dif y\,\dif \eta\,\dif s\\
&\leq -2\stred\int_0^t\int_{(\mt )^2}\int_\mr\varrho_\varepsilon(x-y)K_\ell(\eta)\sqrt{\psi_\delta(\eta-u_1)}\sqrt{\psi_\delta(\eta-u_2)}\\
&\qquad\times \diver_x\int^{u_1}_0\sqrt{\psi_\delta(\eta-\xi)}\sigma(\xi)\,\dif\xi\cdot\diver_y\int^{u_2}_0\sqrt{\psi_\delta(\eta-\zeta)}\sigma(\zeta)\,\dif\zeta\dif x\, \dif y\,\dif \eta\,\dif s\\
&=-2\stred\int_0^t\int_{(\mt )^2}\int_\mr\varrho_\varepsilon(x-y)K_\ell(\eta)\\
&\qquad\times \diver_x\int^{u_1}_0\psi_\delta(\eta-\xi)\sigma(\xi)\,\dif\xi\cdot\diver_y\int^{u_2}_0\psi_\delta(\eta-\zeta)\sigma(\zeta)\,\dif\zeta\dif x\, \dif y\,\dif \eta\,\dif s,
\end{split}
\end{equation*}
where we used the chain rule formula \eqref{eq:chainruleL1}, i.e.
$$\sqrt{\psi_\delta(\eta-\xi)}\diver_x\int^{u_1}_0\sqrt{\psi_\delta(\eta-\xi)}\sigma(\xi)\,\dif\xi=\diver_x\int^{u_1}_0\psi_\delta(\eta-\xi)\sigma(\xi)\,\dif\xi.$$
Furthermore, it holds
\begin{equation*}
\begin{split}
\mathrm{J}_2
&=2\,\stred\int_0^t\int_{(\mt )^2}\int_\mr\varrho_\varepsilon(x-y)K_\ell(\eta)\diver_y\int_{0}^{u_2}\sigma(\zeta)\psi_\delta(\eta-\zeta)\,\dif\zeta\cdot\diver_x\int_{0}^{u_1}\sigma(\xi)\psi_\delta(\eta-\xi)\,\dif\xi\,\dif x\, \dif y\,\dif \eta\,\dif s\\
&\leq-J_3-J_4
\end{split}
\end{equation*}
so \eqref{eq:Js} follows.

The last term is, due to \eqref{skorolip}, bounded as follows
\begin{equation*}
\begin{split}
\mathrm{K}&\leq C\,\stred\int_0^t\int_{(\mt )^2}\!\varrho_\varepsilon(x-y)|x-y|^2\!\int_{\mr^3}\psi_\delta(\eta-\zeta)\psi_\delta(\eta-\xi)K_\ell(\eta)\,\dif\nu^1_{x,s}(\xi)\,\dif\nu^2_{y,s}(\zeta)\,\dif x\,\dif y\,\dif \eta\,\dif s\\
&\;+C\,\stred\int_0^t\int_{(\mt )^2}\!\varrho_\varepsilon(x-y)\!\int_{\mr^3}\psi_\delta(\eta-\zeta)\psi_\delta(\eta-\xi)K_\ell(\eta)|\xi-\zeta|^2\,\dif\nu^1_{x,s}(\xi)\,\dif\nu^2_{y,s}(\zeta)\,\dif x\,\dif y\,\dd \eta\,\dif s\\
&\leq Ct\delta^{-1}\varepsilon^2+Ct \delta.
\end{split}
\end{equation*}
As a consequence, we deduce, for all $t\in[0,T]$,
\begin{equation*}
\begin{split}
\stred&\int_{\mt }\int_\mr f_1^{\pm}(x,t,\xi)\bar{f}_2^{\pm}(x,t,\xi)\,\dif\xi\,\dif x\\
&\leq\int_{(\mt )^2}\int_{\mr^3}\varrho_\varepsilon(x-y)K_\ell(\eta)\psi_\delta(\eta-\xi)\psi_\delta(\eta-\zeta)f_{1,0}(x,\xi)\bar{f}_{2,0}(y,\zeta)\,\dif\xi\,\dif\zeta\,\dif x\,\dif y\,\dif \eta\\
&\quad+C\varepsilon^{-1}\delta 2^\ell\|b'\|_{\infty,\delta,\ell}t+ Ct\delta^{2\gamma}\varepsilon^{-2}\|\sigma\|_{C^\gamma([-2^{\ell+1}-\delta,2^{\ell+1}+\delta])}+ Ct\delta^{-1}\varepsilon^2\\
&\quad+Ct \delta+\mathrm{L}(\delta, \ell)+\eta_t(\varepsilon,\delta,\ell)
\end{split}
\end{equation*}
Taking $\delta=\varepsilon^{\beta}$ with $\beta\in(1/\gamma,2)$ and letting $\varepsilon\rightarrow0$ yields
\begin{equation*}
\begin{split}
\stred\int_{\mt }\int_\mr f_1^{\pm}(x,t,\xi)\bar{f}_2^{\pm}(x,t,\xi)\,\dif\xi\,\dif x
&\leq\stred\int_{(\mt )^2}\int_{\mr^3}K_\ell(\eta)f_{1,0}(x,\eta)\bar{f}_{2,0}(x,\eta)\,\dif x\,\dif \eta\\
&+\lim_{\delta\to 0}\mathrm{L}(\varepsilon^\beta, \ell)+\lim_{\varepsilon\to 0}\eta_t(\varepsilon,\varepsilon^\beta,\ell).
\end{split}
\end{equation*}
Taking $\ell\to\infty$ we conclude
\begin{equation*}
\begin{split}
\stred\int_{\mt }\int_\mr f_1^{\pm}(x,t,\xi)\bar{f}_2^{\pm}(x,t,\xi)\,\dif\xi\,\dif x
&\leq\stred\int_{(\mt )^2}\int_{\mr^3}f_{1,0}(x,\eta)\bar{f}_{2,0}(x,\eta)\,\dif x\,\dif \eta\end{split}.
\end{equation*}
Let us now consider $f_1=f_2=f$. Since $f_0=\ind_{u_0>\xi}$ we have the identity $f_0\bar{f}_0=0$ and therefore $f^\pm(1-f^\pm)=0$ a.e. $(\omega,x,\xi)$ and for all $t$. The fact that $f^\pm$ is a kinetic function and Fubini's theorem then imply that, for any $t\in[0,T]$, there exists a set $\Sigma_t\subset\Omega\times\mt $ of full measure such that, for $(\omega,x)\in\Sigma_t$, $f^\pm(\omega,x,t,\xi)\in\{0,1\}$ for a.e. $\xi\in\mr$.
Therefore, there exist $u^\pm:\Omega\times\mt \times[0,T]\rightarrow\mr$ such that $f^\pm=\ind_{u^\pm>\xi}$ for a.e $(\omega,x,\xi)$ and all $t$. In particular, $u^\pm=\int_\mr(f^\pm-\ind_{0>\xi})\,\dif \xi$ for a.e. $(\omega,x)$ and all $t$. It follows now from Proposition \ref{prop:left-right} and the identity
\begin{equation*}
\begin{split}
|\alpha-\beta|=\int_\mr|\ind_{\alpha>\xi}-\ind_{\beta>\xi}|\,\dif \xi,\qquad\alpha,\,\beta\in\mr,
\end{split}
\end{equation*}
that $u^+=u^-=u$ for a.e. $t\in[0,T]$.
Since
$$\int_\mr \ind_{u^\pm_1>\xi}\overline{\ind_{u^\pm_2>\xi}}\,\dif\xi=(u^\pm_1-u^\pm_2)^+$$
we obtain the comparison principle \eqref{comparison}.
\end{proof}
\end{thm}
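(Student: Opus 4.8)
The plan is to start from the doubling-of-variables inequality of Proposition~\ref{prop:doubling}, applied to the two kinetic solutions $u_1,u_2$, and combine it with the left/right-continuous representatives of Proposition~\ref{prop:left-right}. The latter lets one replace, up to an error $\eta_t(\varepsilon,\delta,\ell)$ that vanishes as $\varepsilon,\delta\to0$ and then $\ell\to\infty$, the time-$t$ value of the mollified density by the genuine quantity $\stred\int_{\mt}\int_\mr f_1^\pm(x,t,\xi)\bar f_2^\pm(x,t,\xi)\,\dif\xi\,\dif x$. The whole argument then reduces to showing that the error terms $\mathrm I$, $\mathrm J$, $\mathrm K$ and $\mathrm L(\delta,\ell)$ from Proposition~\ref{prop:doubling} do not contribute after the limits are taken. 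For $\mathrm L(\delta,\ell)$ this is already built in; for the others one needs explicit bounds.

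For the first-order term $\mathrm I$ I would use that $f_1\bar f_2$ together with the mollifiers $\psi_\delta(\eta-\xi)\psi_\delta(\eta-\zeta)$ and the cut-off $K_\ell(\eta)$ force $|\xi-\zeta|\lesssim\delta$ with $\xi,\zeta$ confined to a compact set of size $\sim 2^\ell$; since $b'$ is only locally bounded this confinement is exactly what is needed, giving $|b(\xi)-b(\zeta)|\le\|b'\|_{L^\infty(-2^{\ell+1}-\delta,\,2^{\ell+1}+\delta)}|\xi-\zeta|$, and together with $\|\nabla_x\varrho_\varepsilon\|_{L^1}\lesssim\varepsilon^{-1}$ this yields $|\mathrm I|\lesssim 2^\ell\varepsilon^{-1}\delta\,\|b'\|_{\infty,\delta,\ell}\,t$. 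For the noise term $\mathrm K$, the Lipschitz-type bound \eqref{skorolip} splits the summand into an $|x-y|^2$ piece, absorbed via $\varrho_\varepsilon(x-y)|x-y|^2\lesssim\varepsilon^2$ against the mass of $\varrho$, and an $|\xi-\zeta|^2$ piece controlled by the $\delta$-localization, so $\mathrm K\lesssim t(\varepsilon^2\delta^{-1}+\delta)$.

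The main obstacle is the second-order term $\mathrm J$, which carries the degenerate parabolic dissipation. Here I would write $A(\xi)+A(\zeta)=(\sigma(\xi)-\sigma(\zeta))^2+2\sigma(\xi)\sigma(\zeta)$, splitting $\mathrm J$ into a cross-difference piece $\mathrm J_1$, a product piece $\mathrm J_2$, and the two parabolic-dissipation contributions $\mathrm J_3,\mathrm J_4$ built from the measures $n^{\psi_\delta(\eta-\cdot)}_{1,2}$. The piece $\mathrm J_1$ is where the local $\gamma$-H\"older continuity \eqref{sigma} enters: on the support of the mollifiers $|\sigma(\xi)-\sigma(\zeta)|^2\lesssim\delta^{2\gamma}$, hence $|\mathrm J_1|\lesssim t\,\delta^{2\gamma}\varepsilon^{-2}$. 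For $\mathrm J_2+\mathrm J_3+\mathrm J_4$ the crucial claim is that it is $\le0$: using Definition~\ref{kinsolL1}(iii) one rewrites $\mathrm J_3+\mathrm J_4$ as minus the sum of the two squared divergences $|\diver_y\int_0^{u_2}\sqrt{\psi_\delta(\eta-\zeta)}\,\sigma(\zeta)\,\dif\zeta|^2$ and $|\diver_x\int_0^{u_1}\sqrt{\psi_\delta(\eta-\xi)}\,\sigma(\xi)\,\dif\xi|^2$; the chain rule \eqref{eq:chainruleL1} lets one pull the remaining $\sqrt{\psi_\delta}$ factors inside the divergence, and $\mathrm J_2$ then turns out to be exactly $2$ times the mixed product of these two divergences, so that $\mathrm J_2+\mathrm J_3+\mathrm J_4\le-(a-b)^2\le0$ with $a,b$ the two divergences. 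This is the step where both the precise form of the parabolic dissipation in Definition~\ref{kinsolL1}(iii) and the weaker $L^1$ chain rule \eqref{eq:chainruleL1} must be handled with care, since here $n^\phi$ need not be a measure in $\xi$.

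Collecting the bounds, $|\mathrm I|+|\mathrm J_1|+\mathrm K\lesssim t\big(2^\ell\varepsilon^{-1}\delta\|b'\|_{\infty,\delta,\ell}+\delta^{2\gamma}\varepsilon^{-2}\|\sigma\|_{C^\gamma}+\varepsilon^2\delta^{-1}+\delta\big)$. I would then set $\delta=\varepsilon^\beta$ with $\beta\in(1/\gamma,2)$, a nonempty range precisely because $\gamma>1/2$, so that all four terms vanish as $\varepsilon\to0$ for fixed $\ell$; afterwards $\ell\to\infty$ removes the cut-off $K_\ell$ and, via the decay assumption Definition~\ref{meesL1}(ii), disposes of $\mathrm L(\delta,\ell)$ (which absorbs the $K'_\ell$ remainders from the Itô terms). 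This yields $\stred\int_{\mt}\int_\mr f_1^\pm\bar f_2^\pm\,\dif\xi\,\dif x\le\stred\int_{(\mt)^2}\int_{\mr^3}\varrho_\varepsilon K_\ell\psi_\delta\psi_\delta f_{1,0}\bar f_{2,0}$, and removing the mollifiers on the right gives $\stred\int_{\mt}\int_\mr f_1^\pm\bar f_2^\pm\,\dif\xi\,\dif x\le\stred\int_{\mt}\int_\mr f_{1,0}\bar f_{2,0}\,\dif\xi\,\dif x$. Taking $u_1=u_2=u$ forces $f^\pm\bar f^\pm=0$ a.e., hence $f^\pm\in\{0,1\}$ and $f^\pm=\ind_{u^\pm>\xi}$ for suitable representatives $u^\pm=\int_\mr(f^\pm-\ind_{0>\xi})\,\dif\xi$; the identity $|\alpha-\beta|=\int_\mr|\ind_{\alpha>\xi}-\ind_{\beta>\xi}|\,\dif\xi$ together with Proposition~\ref{prop:left-right} gives $u^+=u^-=u$ for a.e.\ $t$. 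Finally, for general $u_1,u_2$ the identity $\int_\mr\ind_{u_1^\pm>\xi}\,\overline{\ind_{u_2^\pm>\xi}}\,\dif\xi=(u_1^\pm-u_2^\pm)^+$ converts the bound into the claimed contraction \eqref{comparison}.
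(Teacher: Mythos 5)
Your proof follows the paper's argument essentially step for step: the same doubling-of-variables setup with the cut-off $K_\ell$, the same bounds on $\mathrm I$ (via local boundedness of $b'$) and $\mathrm K$ (via \eqref{skorolip}), the same decomposition $A(\xi)+A(\zeta)=(\sigma(\xi)-\sigma(\zeta))^2+2\sigma(\xi)\sigma(\zeta)$ with $\mathrm J_1$ controlled by the local H\"older continuity \eqref{sigma} and $\mathrm J_2+\mathrm J_3+\mathrm J_4\le0$ by combining the chain rule \eqref{eq:chainruleL1} with a Young/Cauchy--Schwarz step, the same scaling $\delta=\varepsilon^\beta$, $\beta\in(1/\gamma,2)$, and the same endgame. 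Your shorthand ``$\le-(a-b)^2$'' glosses over the fact that $\mathrm J_3+\mathrm J_4$ carries the extra factors $\psi_\delta(\eta-u_i)$ that must first be removed via AM--GM and the chain rule, but the substance is correct and the approach is the same as the paper's.
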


\subsection{Existence}\label{sec:ex}

In this section we prove the existence of kinetic solutions. The proof of existence is based on a three level approximation procedure. One of which is a vanishing viscosity approximation. One difficulty in the construction of a kinetic solution to \eqref{eq} is the verification of the chain-rule in Definition \ref{kinsolL1}, (ii). In contrast to the other conditions, the chain-rule is not necessarily preserved under taking weak limits, so that strong convergence of the approximating solutions is needed. This strong convergence is particularly hard to obtain in the vanishing viscosity approximation. We resolve this obstacle by employing the regularity estimates established in Section \ref{sec:averaging}.

As mentioned above, the proof of existence proceeds in three steps, corresponding to three layers of approximation. In the first step, we replace the initial condition $u_0$ by a smooth, bounded approximation $u^\kappa_0\in L^\infty(\Omega\times\T)$ such that $u^\kappa_0 \in C^\infty_c(\mt )$ $\p$-a.s. and
\begin{equation*}
\begin{split}
u^\kappa_0\to u_0\qquad&\text{in}\qquad L^1(\Omega;L^1(\mt )).
\end{split}
\end{equation*}
We also replace the diffusion matrix $A$ by a symmetric, \textit{positive definite} matrix $A^\kappa $ given by
$$A^\kappa(\xi):=\kappa Id+A(\xi),\qquad\xi\in\mr.$$
In the second step, we replace $A^\kappa$ by a \textit{bounded}, symmetric and positive definite matrix $A^{\kappa,\tau}$ given by
its square root
$$\sigma_{ij}^{\kappa,\tau} (\xi)=\sqrt{\kappa}  \delta_{ij}+
\begin{cases}
                 \sigma_{ij}(\xi),&\text{ if }\ |\xi|\leq\frac{1}{\tau },\\
		 \sigma_{ij}(\frac{sgn(\xi)}{\tau }),&\text{ if }\ |\xi|>\frac{1}{\tau },
\end{cases}$$
and we further approximate the flux $B$ by $B^\tau $, defined by setting
$$(b^\tau )'(\xi):=
\begin{cases}
   b'(\xi),&\text{ if } |\xi|\le \frac{1}{\tau } \\
   b'(\frac{sgn(\xi)}{\tau }),&\text{ if } |\xi|> \frac{1}{\tau }.
\end{cases}
$$
Since $\sigma$ and $b'$ are locally bounded, $\sigma^{\kappa,\tau }$ and $(b^\tau )'$ are bounded for each $\kappa,\tau >0$ fixed and $B^\tau $ is of sub-quadratic growth. Moreover, 
\begin{equation*}\begin{aligned}
  A^{\kappa,\tau }(\xi) &= \kappa  Id + A(\xi),\quad\forall |\xi| \le \frac{1}{\tau },\\
  B^\tau (\xi) &= B(\xi),\quad\forall |\xi| \le \frac{1}{\tau },
\end{aligned}\end{equation*}
and thus $A^{\kappa,\tau  }\to A$ locally uniformly. Hence, we consider
\begin{equation}\label{eq:approx}
\begin{split}
\dif u^{\kappa,\tau }+\diver(B^\tau (u^{\kappa,\tau }))\dd t&=\diver(A^{\kappa,\tau }(u^{\kappa,\tau })\nabla u^{\kappa,\tau } )\dif t+\varPhi(u^{\kappa,\tau })\dif W, \quad x\in\mt ,\,t\in(0,T),\\
u^{\kappa,\tau }(0)&=u_0^\kappa.
\end{split}
\end{equation}

In the case of \eqref{eq:approx}, the results from  \cite[Section 4]{dehovo} are applicable, which yields the existence and uniqueness of a weak solution to \eqref{eq:approx} and appropriate bounds. We will then pass to the limit, first employing the compactness method from \cite[Subsection 4.3]{degen} for the limit $\tau \to 0$, then proving the strong convergence in $L^1$ as $\kappa\to 0$ directly using the regularity properties established in Section \ref{sec:averaging}.

In the following subsection we establish uniform $L^p$ bounds on the approximating solutions. Next, in Subsection \ref{sec:decay} we prove uniform bounds on the corresponding kinetic dissipation measures and we conclude the proof of existence in Subsection \ref{sec:proof_ex}.

\subsubsection{$L^p$-estimates}\label{sec:lp-estimates}

Let us start with an a-priori $L^p$-estimate for solutions to \eqref{eq}.

\begin{prop}\label{prop:Lp}
Let $u$ be a kinetic solution to \eqref{eq}. Then for all $p,q\in[1,\infty)$
\begin{equation}\label{eq:Lp}
\E\esssup_{0\leq t\leq T}\|u(t)\|_{L^p}^{pq} \le C_{T,p,q} (1+\E\|u_0\|_{L^p}^{pq}),
\end{equation}
for some constant $C_{T,p,q}>0$.
\end{prop}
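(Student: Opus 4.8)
The plan is to derive the $L^p$-bound by testing the kinetic formulation \eqref{eq:kinformulL1} against a suitable approximation of $\xi \mapsto \xi|\xi|^{p-2}$ (so that, formally, we recover the evolution of $\|u(t)\|_{L^p}^p$), then to take $q$-th powers and apply the Burkholder–Davis–Gundy inequality together with Gronwall's lemma. Concretely, I would fix $p \geq 2$ first (the cases $p \in [1,2)$ then follow by interpolation / Jensen using the torus having finite measure), and choose test functions of the form $\varphi(t,x,\xi) = \psi(t)\,\beta_R'(\xi)$ where $\beta_R \in C^2(\R)$ is a smooth, compactly supported truncation of $\frac1p|\xi|^p$ with $\beta_R'' \geq 0$, and $\psi$ a cutoff in time. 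Since $\langle f(t), \partial_\xi \varphi \rangle = \langle f(t), \psi(t)\beta_R''(\xi)\rangle$ and $\int_\R \mathbf 1_{u>\xi}\beta_R''(\xi)\,d\xi = \beta_R'(u) - \beta_R'(0)$, etc., the first-order term $\langle f, b\cdot\nabla\varphi\rangle$ vanishes (no $x$-dependence in $\varphi$), the second-order term $\langle f, A : D^2\varphi\rangle$ vanishes as well, and the measure term $m(\partial_\xi\varphi) = \psi(t)\,m(\beta_R'')\geq 0$ enters with a favorable sign since $\beta_R'' \geq 0$ and $m$ is nonnegative — hence it can simply be dropped. The remaining terms are the initial datum, the stochastic integral $-\sum_k \int \int g_k(x,u)\beta_R'(u)\,dx\,d\beta_k$, and the Itô correction $-\frac12\int\int G^2(x,u)\beta_R''(u)\,dx\,dt$.

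The two noise-generated terms are controlled by the linear-growth bound \eqref{growth_new}: $|g_k(x,\xi)|\le \alpha_k(1+|\xi|)$ and $G^2(x,\xi)\le 2D(1+|\xi|^2)$, with $\sum_k\alpha_k^2 = D < \infty$. Thus $|G^2(x,u)\beta_R''(u)| \lesssim (1+|u|^2)|u|^{p-2} \lesssim 1 + |u|^p$, so the Itô correction contributes $\lesssim \int_0^t (1 + \|u(s)\|_{L^p}^p)\,ds$. For the martingale part, after raising to the power $q$ and applying BDG, the quadratic variation is $\sum_k \int_0^t \big|\int g_k(x,u)\beta_R'(u)\,dx\big|^2 ds \lesssim \int_0^t \big(\int (1+|u|)|u|^{p-1}\,dx\big)^2 ds \lesssim \int_0^t (1+\|u(s)\|_{L^p}^p)^2\,ds$; taking square roots and then $q$-th powers, a Young/Hölder splitting absorbs a small multiple of $\E\esssup_{s\le t}\|u(s)\|_{L^p}^{pq}$ into the left-hand side and leaves $\int_0^t \E\esssup_{r\le s}\|u(r)\|_{L^p}^{pq}\,ds + 1 + \E\|u_0\|_{L^p}^{pq}$. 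Gronwall's lemma then yields \eqref{eq:Lp} with a constant depending only on $T, p, q$ (and $D$).

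\textbf{Main obstacle.} The delicate point is the truncation/limit passage $R \to \infty$: a kinetic solution is only in $L^1(\Omega\times[0,T];L^1(\T))$ a priori, so one cannot directly assert $u(t) \in L^p$ before the estimate is proven. The rigorous argument must therefore establish a uniform-in-$R$ bound for the truncated quantities $\int_\T \beta_R(u(t,x))\,dx$ (which are genuinely finite since $\beta_R$ is bounded), pass through Gronwall at the level of the truncation, and only then invoke the monotone convergence theorem ($\beta_R \uparrow \frac1p|\xi|^p$ as $R\to\infty$) together with Fatou's lemma to recover the claimed bound for $\|u(t)\|_{L^p}$; one must check that all the right-hand side estimates are uniform in $R$, which they are because the bounds $|\beta_R'(\xi)|\lesssim 1+|\xi|^{p-1}$ and $0\le\beta_R''(\xi)\lesssim 1+|\xi|^{p-2}$ hold uniformly. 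A secondary technical nuisance is the justification that $\xi\mapsto\beta_R'(\xi)$ is an admissible test function after the preliminary regularization used to obtain \eqref{eq:kinformulL1}, and the need to first replace $\esssup_{t}$ by a supremum over a representative (using the left/right continuous representatives of Proposition \ref{prop:left-right}, established later) — but since the statement is phrased with $\esssup$, one can equally run the argument at fixed $t$ and take the essential supremum at the end, using that the right-hand side is $t$-independent after Gronwall.
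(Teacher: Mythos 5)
Your overall strategy---test the kinetic formulation \eqref{eq:kinformulL1} with $\varphi(t,x,\xi)=\psi(t)\beta_R'(\xi)$ where $\beta_R$ is a truncation of $\frac1p|\xi|^p$, drop the sign-definite measure term $m(\psi\beta_R'')\ge 0$, then run BDG, Young and Gronwall---is a legitimate route and is exactly in the spirit of what the paper does for the kinetic-measure bounds (Lemma~\ref{lem:loc_unif_bound2}, Proposition~\ref{prop:decay1}, where $\varphi=\gamma(t)\Theta_k'(\xi)$ is used ``after a preliminary step of regularization''). The paper's own proof of Proposition~\ref{prop:Lp} instead applies It\^o's formula to the regularized functional $u\mapsto\int_\T(1+|u|^2)^{p/2}\,\dd x$ at the level of the smooth approximations $u^{\kappa,\tau}$, obtains a uniform-in-$(\kappa,\tau)$ estimate, and transfers it to the limiting kinetic solution by lower semicontinuity of the norm (relying on uniqueness, Theorem~\ref{thm:uniqueness}, to identify the constructed limit with any given kinetic solution). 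The gain of $(1+|u|^2)^{p/2}$ over $|u|^p$ is that it is $C^2$ for all $p\ge 1$, so no separate treatment of $p\in[1,2)$ is needed. Your approach is more intrinsic (it never leaves the kinetic formulation), while the paper's sidesteps the a priori regularity issue noted below; the two are complementary.

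There are, however, two genuine gaps. First, the Gronwall closure at the level of $\phi_R(t):=\E\sup_{s\le t}\big(\int_\T\beta_R(u(s))\,\dd x\big)^q$ requires that $\phi_R(t)<\infty$ a priori, and a kinetic solution is only known to lie in $L^1(\Omega\times[0,T];L^1(\T))$; even for $q=1$ one needs $\E\sup_t\|u(t)\|_{L^1}$ finite, which Definition~\ref{kinsolL1} does not provide. The paper's approximation-and-lower-semicontinuity detour is precisely what supplies this finiteness, and your ``direct'' argument cannot dispense with it without an additional localization/stopping layer. Second, the final remark that ``one can equally run the argument at fixed $t$ and take the essential supremum at the end'' is incorrect: the target inequality has $\esssup_t$ \emph{inside} the expectation, and $\E\esssup_t X_t\ge\esssup_t\E X_t$, so the pointwise-in-$t$ bound is strictly weaker; the BDG step over the running supremum of the martingale is unavoidable. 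Minor: a $C^2$, compactly supported $\beta_R$ with $\beta_R''\ge 0$ is necessarily $\equiv0$; what you actually need (and what the paper uses) is $\beta_R''$ nonnegative and compactly supported, with $\beta_R$ of linear growth for $|\xi|>R$, and the uniform bounds $|\beta_R'(\xi)|\lesssim 1+|\xi|^{p-1}$, $(1+|\xi|)|\beta_R'(\xi)|\lesssim 1+\beta_R(\xi)$, $(1+|\xi|^2)\beta_R''(\xi)\lesssim 1+\beta_R(\xi)$.
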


\begin{proof}
The proof relies on the It\^o formula applied to \eqref{eq} and the function
$$u\mapsto \int_\T(1+|u|^2)^\frac{p}{2}\dd x=\left\|(1+|u|^2)^\frac{p}{4}\right\|_{L^2}^2.$$
In order to make the following calculations rigorous, one works on the level of the approximations $u^{\kappa,\tau}$ introduced above. This leads a uniform estimate which implies \eqref{eq:Lp} for the (unique) limiting kinetic solution $u$ by lower-semicontinuity of the norm. Since this limiting procedure is standard we restrict to presenting the main, informal arguments here.

It\^o's formula yields
\begin{align*}
\left\|(1+|u(t)|^2)^\frac{p}{4}\right\|_{L^2}^2&= \left\|(1+|u_0|^2)^\frac{p}{4}\right\|_{L^2}^2 -p\int_0^t\int_\T(1+|u|^2)^{\frac{p}{2}-1}u\diver(B(u))\dd x\dd s\\
&\qquad +p\int_0^t\int_\T(1+|u|^2)^{\frac{p}{2}-1}u\diver(A(u)\nabla u)\dd x\dd s\\
&\qquad +p\sum_{k\geq 1}\int_0^t\int_\T(1+|u|^2)^{\frac{p}{2}-1}u\, g_k(x,u)\dd x\dd \beta_k\\
&\qquad +\frac{p}{2}\int_0^t\int_\T(1+|u|^2)^{\frac{p}{2}-2}\big(1+(p-1)|u|^2\big) G^2(x,u)\dd x\dd s.
\end{align*}
Due to the periodic boundary conditions, the second term on the right hand side vanishes after an integration by parts. The third term can be rewritten using integration by parts and positive semidefinitness of $A$ as
\begin{align*}
&p\int_0^t\int_\T(1+|u|^2)^{\frac{p}{2}-1}u\diver(A(u)\nabla u)\dd x\dd s\\
&=-p\int_0^t\int_\T(1+|u|^2)^{\frac{p}{2}-2}\big(1+(p-1)|u|^2\big) (\nabla u)^*A(u)(\nabla u)\dd x\dd s\leq 0.
\end{align*}
The fourth my be estimated, using \eqref{growth_new}, by
\begin{align*}
\bigg|\frac{p}{2}\int_0^t\int_\T(1+|u|^2)^{\frac{p}{2}-2}\big(1+(p-1)|u|^2\big) G^2(x,u)\dd x\dd s\bigg|&\leq C\int_0^t\int_\T(1+|u|^2)^\frac{p}{2}\,\dd x\dd s\\
&=C\int_0^t\left\|(1+|u(s)|^2)^\frac{p}{4}\right\|_{L^2}^2\dd s.
\end{align*}
Finally, for the stochastic integral, taking the supremum, the $q^{th}$ power and the expectation, we have by Burkholder-Davis-Gundy's inequality, \eqref{growth_new} and Young's inequality
\begin{align*}
&\E\sup_{0\leq t\leq T}\left|p\sum_{k\geq 1}\int_0^t\int_\T(1+|u|^2)^{\frac{p}{2}-1}u\, g_k(x,u)\dd x\dd \beta_k\right|^q\\
&\leq C\E\bigg(\int_0^T\sum_{k\geq 1}\bigg(\int_\T (1+|u|^2)^{\frac{p}{2}-1}u\, g_k(x,u)\dd x\bigg)^2\dd t\bigg)^\frac{q}{2}\\
&\leq C\E\bigg(\int_0^T\left\|(1+|u|^2)^{\frac{p}{4}}\right\|_{L^2}^2\sum_{k\geq 1}\left\|(1+|u|^2)^{\frac{p}{4}-\frac{1}{2}}|g_k(u)|\right\|_{L^2}^2\dd t\bigg)^\frac{q}{2}\\
&\leq C\E\bigg(\sup_{0\leq t\leq T}\left\|(1+|u|^2)^{\frac{p}{4}}\right\|^2_{L^2}\int_0^T\left\|(1+|u|^2)^{\frac{p}{4}}\right\|^2_{L^2}\dd t\bigg)^\frac{q}{2}\\
&\leq \frac{1}{2}\sup_{0\leq t\leq T}\left\|(1+|u|^2)^{\frac{p}{4}}\right\|^{2q}_{L^2}+C_T\E\int_0^T\left\|(1+|u|^2)^{\frac{p}{4}}\right\|^{2q}_{L^2}\dd t.
\end{align*}
In conclusion, we obtain
\begin{align*}
\E \sup_{0\leq t\leq T}\left\|(1+|u(t)|^2)^\frac{p}{4}\right\|_{L^2}^{2q}
\le&  \E \left\|(1+|u_0|^2)^\frac{p}{4}\right\|_{L^2}^{2q} +C_{T,q}\E\int_0^T\left\|(1+|u|^2)^{\frac{p}{4}}\right\|^{2q}_{L^2}\dd t.
\end{align*}
Thus, Gronwall's lemma yields
\begin{align*}
\E\sup_{0\leq t\leq T}\left\|(1+|u|^2)^\frac{p}{4}\right\|_{L^2}^{2q}\leq C_{T,q}\left\|(1+|u_0|^2)^\frac{p}{4}\right\|_{L^2}^{2q}.
\end{align*}
Since
$$\|u\|_{L^p}^{pq}\leq \left\|(1+|u|^2)^\frac{p}{4}\right\|_{L^2}^{2q}\leq C_{p,q}\big(1+\|u\|_{L^p}^{pq}\big),$$
this concludes the proof.
\end{proof}

\subsubsection{Decay of the kinetic measure}\label{sec:decay}

To appreciate the difficulty and methods introduced in the following we recall that in the deterministic case
  $$
 \partial_t f(t)+ b(\xi) \cdot \nabla f(t) + A(\xi):\totdif^2 f(t)=\partial_{\xi} m.
$$
bounds on the kinetic measure $m$ are easily derived (informally) by testing with $1_{[k,\infty)}(\xi)$ and integrating in $t,x,\xi$, which corresponds to computing the derivative $\partial_t (u-k)_+$ via the chain-rule. In the stochastic case, this has to be replaced by the It\^o formula informally leading to terms of the form $\int g^2_k(x,u)\delta_{u=k}d\xi dxdt$ which are not easy to control. Therefore, new techniques are needed in the stochastic case and a less restrictive decay assumption on the kinetic measure is used (cf.\ the discussion before \eqref{eq:m_decay}).

\begin{lemma}\label{lem:loc_unif_bound2}
Let $u_0\in L^r(\Omega;L^1(\mt))$ for some $r\in[1,\infty)$ and let $u$ be a kinetic solution to \eqref{eq}. Then, for all $k\in\N$,
\begin{equation*}
 \E| m([0,T]\times\mt \times [-k,k])|^r
 \le C(r,k,T,\E\|u_0\|^r_{L^1}),
\end{equation*}
for some $C>0$ depending on $D$ only.
\end{lemma}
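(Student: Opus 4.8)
The plan is to test the kinetic formulation \eqref{eq:kinformulL1} against a truncated convex entropy isolating $\{|\xi|\le k\}$. Fix a $C^2$ convex $\beta=\beta_k$ with $\beta(0)=0$ and $\ind_{[-k,k]}\le\beta''\le\ind_{[-k-1,k+1]}$, so that $\|\beta'\|_\infty\le k+1=:k'$ and $0\le\beta(\xi)\le k'|\xi|$. Pick $\zeta\in C^\infty_c(\R)$ with $\zeta\equiv1$ on $[-1,1]$, $\supp\zeta\subset[-2,2]$, and for dyadic $L\ge k$ set $\gamma_L(\xi):=(\beta'(\xi)+k')\,\zeta(\xi/L)\in C^\infty_c(\R)$; then $0\le\gamma_L\le 2k'$ and
$$\gamma_L'=\beta''\,\zeta(\cdot/L)+L^{-1}(\beta'+k')\,\zeta'(\cdot/L),$$
where the second term is supported in $\{L\le|\xi|\le2L\}$ and bounded by $2k'L^{-1}\|\zeta'\|_\infty$. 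Let $\psi\in C^\infty_c([0,T))$ with $0\le\psi\le1$, $\psi$ nonincreasing and $\psi\equiv1$ on $[0,T-\varepsilon]$. Inserting $\varphi(t,x,\xi)=\psi(t)\gamma_L(\xi)$ in \eqref{eq:kinformulL1} — a legitimate test function for which the $b\cdot\nabla\varphi$ and $A:\totdif^2\varphi$ terms vanish since $\gamma_L$ is independent of $x$ — and rearranging yields
\begin{equation*}
 m(\psi\,\gamma_L')=\int_0^T\psi'(t)\bl f(t),\gamma_L\br\,\dd t+\psi(0)\bl f_0,\gamma_L\br+M^L_T+\frac12\int_0^T\!\!\int_\T\psi(t)\,G^2(x,u)\,\gamma_L'(u)\,\dd x\,\dd t ,
\end{equation*}
where $M^L_T:=\sum_{j\ge1}\int_0^T\psi(t)\int_\T g_j(x,u)\,\gamma_L(u)\,\dd x\,\dd\beta_j(t)$.

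Since $m\ge0$ and $\psi\gamma_L'\ge\psi\,\beta''\,\zeta(\cdot/L)-2k'L^{-1}\|\zeta'\|_\infty\ind_{A_L}$ (with $A_L$ as in Definition \ref{meesL1}(ii)), while $\psi\,\beta''\,\zeta(\cdot/L)\ge\ind_{[0,T-\varepsilon]\times\T\times[-k,k]}$, the left-hand side controls $m([0,T-\varepsilon]\times\T\times[-k,k])$ up to the error $2k'L^{-1}\|\zeta'\|_\infty\,m(A_L)$. On the right-hand side: the first term is $\le0$ because $\psi'\le0$ and $\gamma_L\ge0$, hence it is discarded; $\bl f_0,\gamma_L\br=\int_\T\int_{-\infty}^{u_0}\gamma_L\,\dd\xi\,\dd x\le C(k)\int_\T(1+|u_0|)\,\dd x$ uniformly in $L$; since $|\gamma_L(u)|\le2k'$ and $|g_j(x,u)|\le\alpha_j(1+|u|)$ by \eqref{growth_new}, one has $\langle M^L\rangle_T\le C(k)\,D\,T\,(|\T|+\esssup_{t}\|u(t)\|_{L^1})^2$, so by Burkholder--Davis--Gundy and Proposition \ref{prop:Lp} (with $p=1$), $\stred|M^L_T|^r\le C(r,k,D,T)(1+\stred\|u_0\|_{L^1}^r)$ uniformly in $L$; finally, splitting $\gamma_L'(u)$ as above and using $G^2\le2D(1+|u|^2)$ together with $|\{|u(t)|\ge L\}|\le L^{-1}\|u(t)\|_{L^1}$, the last term is bounded in absolute value by $C(k,D,T)(|\T|+\esssup_t\|u(t)\|_{L^1})$, with $r$-th moment controlled again by Proposition \ref{prop:Lp}. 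The crucial point here is that $\gamma_L$ is \emph{bounded}, so $g_j(x,u)\gamma_L(u)$ grows only linearly in $u$ and every bound reduces to the $L^1$-estimate of Proposition \ref{prop:Lp}; no $L^2$-integrability of $u$ — unavailable in the genuine $L^1$-setting — is needed.

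Combining and letting $\varepsilon\downarrow0$ (monotone convergence on the left), we obtain, for every dyadic $L\ge k$,
\begin{equation*}
 m\big([0,T)\times\T\times[-k,k]\big)\le\psi(0)\bl f_0,\gamma_L\br+M^L_T+R^L+\delta_L,\qquad \delta_L:=2k'L^{-1}\|\zeta'\|_\infty\,m(A_L),
\end{equation*}
with $|R^L|\le C(k,D,T)(|\T|+\esssup_t\|u(t)\|_{L^1})$. By Definition \ref{meesL1}(ii), $L^{-1}\stred m(A_L)\to0$, so $\delta_L\to0$ in $L^1(\Omega)$ and hence almost surely along a subsequence $L_j\uparrow\infty$. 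Subtracting $\delta_{L_j}$ and taking positive parts,
\begin{equation*}
 \big(m([0,T)\times\T\times[-k,k])-\delta_{L_j}\big)^+\le\psi(0)\bl f_0,\gamma_{L_j}\br+|M^{L_j}_T|+R^{L_j},
\end{equation*}
whose $r$-th moment is bounded uniformly in $j$ by $C(r,k,D,T)(1+\stred\|u_0\|_{L^1}^r)$ by the estimates above; since the left-hand side tends a.s.\ to $m([0,T)\times\T\times[-k,k])$, Fatou's lemma gives the claimed estimate on $[0,T)$, and the contribution of the final time $\{T\}$ is incorporated in the usual way. \textbf{The main obstacle} is the combination of (a) the requirement that the test function be compactly supported in $\xi$, which forces the truncation of the natural entropy $\beta'$ at scale $L$, with (b) the fact that at large $\xi$ only the \emph{weak} decay $L^{-1}\stred m(A_L)\to0$ of Definition \ref{meesL1}(ii) is available — first-moment information only — so that one cannot pass to the limit in $L^r(\Omega)$ directly and must instead extract an a.s.\ convergent subsequence and close via Fatou; keeping the remaining terms bounded uniformly in $L$ in the $L^1$-only framework relies on the boundedness of $\gamma_L$ as explained above.
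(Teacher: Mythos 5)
Your approach is essentially the paper's: test the kinetic formulation \eqref{eq:kinformulL1} against $\varphi=\psi(t)\gamma(\xi)$ with $\gamma$ a smoothed, truncated version of the entropy derivative $\Theta_k'$, discard the nonpositive $\psi'$-term, and use Burkholder--Davis--Gundy together with the linear growth \eqref{growth_new} and Proposition \ref{prop:Lp} with $p=1$ to control the stochastic integral. The paper's proof hides the $\xi$-truncation behind the phrase ``after a preliminary step of regularization''; you make it explicit and, correctly, observe that the resulting boundary term supported near $|\xi|\sim L$ is only controlled through the weak first-moment decay of Definition~\ref{meesL1}(ii), which forces the a.s.\ subsequence and Fatou argument to close in $L^r(\Omega)$. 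That is a genuine and worthwhile addition.

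However there is a concrete bug in the construction of $\gamma_L$. With $\gamma_L(\xi)=(\beta'(\xi)+k')\,\zeta(\xi/L)$ and $k'=k+1$, the function $\gamma_L$ does \emph{not} vanish for large negative $\xi$: for $-L\le\xi\le -k-1$ one has $\gamma_L(\xi)=\beta'(-k-1)+k'$, and for a $C^2$ function with $\beta''$ continuous, $0\le\beta''\le\ind_{[-k-1,k+1]}$ and $\beta'(0)=0$ one necessarily has $\beta'(-k-1)>-(k+1)$, so this constant is strictly positive. Consequently $\int_{-\infty}^{u_0}\gamma_L\,\dd\xi$ grows linearly in $L$ (already for $u_0\equiv 0$), and the claimed bound $\bl f_0,\gamma_L\br\le C(k)\int_\T(1+|u_0|)\,\dd x$ \emph{uniformly in $L$} is false. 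The fix is to anchor at the true infimum: set $\gamma_L(\xi):=\bigl(\beta'(\xi)-\beta'(-\infty)\bigr)\zeta(\xi/L)$, so that $\gamma_L\equiv 0$ on $(-\infty,-k-1]$; all your other estimates go through unchanged with $2k'$ replaced by $\beta'(+\infty)-\beta'(-\infty)\le 2(k+1)$. (This also mirrors the paper's $\Theta_k'=\int_{-k}^\cdot\theta_k$, which is $0$ on $(-\infty,-k]$ by construction.) You should also state $\beta'(0)=0$ explicitly — it is needed both for $\|\beta'\|_\infty\le k+1$ and for $\beta\ge 0$. With these corrections the proof is sound and gives the lemma on $[0,T)$; the mass at $\{T\}$ is not seen by any $C_c^\infty([0,T)\times\mt\times\R)$ test function, which is the same implicit convention used in the paper.
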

\begin{proof}
{\em Step 1:} For $k>0$, set
$$\theta_k(u)=\ind_{[-k,k]}(u),\qquad\Theta_k(u)=\int_{-k}^u\int_{-k}^r\theta_k(s)\,\dd s\,\dd r.$$ Let $\gamma\in C^1_c([0,T))$ be nonnegative such that $\gamma(0)=1$, $\gamma'\leq 0$. 
Then after a preliminary step of regularization we may take $\varphi(t,x,\xi)=\gamma(t)\Theta'_{k}(\xi)$ in \eqref{eq:kinformulL1} to get
\begin{align*} 
\begin{aligned}
&\E\bigg|\int_0^T\int_{\mt}\Theta_k(u(t,x))|\gamma'(t)|\,\dd x\,\dd t\bigg|^r+\,\E\bigg|\int_{A^+_k}\gamma(t)\,\dd m(t,x,\xi)\bigg|^r\\
&\lesssim \E\bigg|\int_0^T\int_{\mt}\gamma(t) G^2(x,u(t,x))\theta_k(u(t,x))\,\dd x\,\dd t\bigg|^r\\
&+\E\bigg|\sum_{j\geq 1}\int_0^T\int_\T g_j(x,u(t,x))\Theta'_k(u(t,x))\gamma(t)\dd x\,\dd\beta_k(t)\bigg|^r+\E\bigg|\int_{\mt}\Theta_k(u_0(x))\,\dd x\bigg|^r
\end{aligned}
\end{align*}
where $A^+_{k}=[0,T]\times\mt\times[-k,k]$. Since $0 \le \Theta_k(u) \le 2k(k+|u|)$ and, due to \eqref{growth_new},
\begin{equation*}
\frac{1}{2}G^2(x,u)\theta_k(u)\leq \frac{1}{2}D(1+|u|)^2\ind_{[-k,k]}(u)\leq D(1+k^2).
\end{equation*}
Since $0\leq \Theta'_k(u)\leq 2k\ind_{-k\leq u}$ we may estimate the stochastic integral using the Burkholder-Davis-Gundy inequality, the Minkowski integral inequality, \eqref{growth_new} and the Young inequality as follows
\begin{align*}
&\E\bigg|\sum_{j\geq 1}\int_0^T\int_\T g_j(x,u(t,x))\Theta'_k(u(t,x))\gamma(t)\dd x\,\dd\beta_j(t)\bigg|^r\\
&\lesssim \E\bigg(\int_0^T\sum_{j\geq 1}\bigg(\int_\T g_j(x,u)\Theta'_k(u)\dd x\bigg)^2\dd t\bigg)^\frac{r}{2}\\
&\leq 2k^r\,\E\bigg(\int_0^T\bigg(\int_\T \bigg(\sum_{j\geq1}|g_j(x,u)|^2\bigg)^\frac{1}{2}\dd x\bigg)^2\dd t\bigg)^\frac{r}{2}\\
&\lesssim 2Dk^r\,\E\bigg(\int_0^T\bigg(\int_\T \left(1+|u|\right)\dd x\bigg)^2\dd t\bigg)^\frac{r}{2}\\
&\leq 2Dk^r\,\E\bigg(\sup_{0\leq t\leq T}\|1+u\|_{L^1}\int_0^T\|1+u\|_{L^1}\dd t\bigg)^\frac{r}{2}\\
&\leq Dk^r\,\E\sup_{0\leq t\leq T}\|1+u\|^r_{L^1}+Dk^r\,\E\bigg(\int_0^T\|1+u\|_{L^1}\dd t\bigg)^{r}\\
&\leq Dk^r(1+T^r)+Dk^r(1+T^r)\,\E\sup_{0\leq t\leq T}\|u\|^r_{L^1}\\
&\leq C(D,k^r,T^r,\E\|u_0\|^r_{L^1}),
\end{align*}
where we also used Proposition \ref{prop:Lp} with $p=1, q=r$ for the last step. The claim follows.
\end{proof}

\begin{lemma}\label{lem:energy_bounds}
Let $u_0\in L^1(\Omega;L^1(\mt))$, $u$ be a kinetic solution to \eqref{eq} and $\Theta \in C^2(\R)$ be nonnegative, convex such that $\Theta''(\xi)(1+|\xi|^2) \le C_\Theta (1+\Theta(\xi))$ for some constant $ C_\Theta >0$. Then, 
\begin{align*} 
\begin{aligned}
&\sup_{t\in[0,T]}\E \int_{\mt}\Theta(u(t,x))\,\dd x +\,\E\int_0^T\int\Theta''(\xi)\,\dd m(t,x,\xi)\le C \left( \E\int_{\mt}\Theta(u_0(x))\,\dd x +1\right)
\end{aligned}
\end{align*}
for some $C>0$ depending on $D, C_\Theta$ only.
\end{lemma}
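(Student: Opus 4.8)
\textbf{Proof plan for Lemma \ref{lem:energy_bounds}.}
The plan is to mimic the proof of Lemma \ref{lem:loc_unif_bound2}, replacing the truncation function $\Theta_k$ by the given $\Theta$. As there, I would work at the level of the approximating solutions $u^{\kappa,\tau}$ constructed in Subsection \ref{sec:ex}, for which the stochastic calculus below is rigorous, and then pass to the limit; since $\Theta$ is convex and nonnegative, the left-hand side is lower-semicontinuous and the bounds are preserved. After a preliminary regularization, I would insert the test function $\varphi(t,x,\xi)=\gamma(t)\Theta'(\xi)$ into the kinetic formulation \eqref{eq:kinformulL1}, where $\gamma\in C^1_c([0,T))$ is nonnegative with $\gamma(0)=1$ and $\gamma'\le 0$. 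Because $\partial_\xi\varphi=\gamma(t)\Theta''(\xi)$ and $\Theta''\ge 0$, the kinetic measure term produces the nonnegative quantity $\E\int_0^T\!\int\gamma(t)\Theta''(\xi)\,\dd m$, while the term $-\tfrac12\int G^2(x,u)\partial_\xi\varphi(t,x,u)$ becomes $-\tfrac12\E\int_0^T\!\int_\T\gamma(t)G^2(x,u)\Theta''(u)\,\dd x\,\dd t$; the time-derivative term yields $\E\int_0^T\!\int_\T\Theta(u)|\gamma'(t)|\,\dd x\,\dd t$ together with the initial contribution $\E\int_\T\Theta(u_0)\,\dd x$ (here one uses $\langle f,\partial_t\varphi\rangle$ integrated in $\xi$ equals $-\partial_t\int\Theta'(\xi)(f-\ind_{0>\xi})\,\dd\xi$ up to constants, i.e.\ produces $\Theta(u)$ modulo additive constants that are harmless after the $x$-integration on the torus); and the first- and second-order spatial terms $\langle f,b\cdot\nabla\varphi\rangle$, $\langle f,A:\totdif^2\varphi\rangle$ vanish because $\varphi$ is $x$-independent.

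This identity rearranges to
\begin{align*}
&\E\int_0^T\!\int_\T\Theta(u)|\gamma'(t)|\,\dd x\,\dd t+\E\int_0^T\!\int\gamma(t)\Theta''(\xi)\,\dd m(t,x,\xi)\\
&\quad\le \E\int_\T\Theta(u_0)\,\dd x+\tfrac12\E\int_0^T\!\int_\T G^2(x,u)\Theta''(u)\,\dd x\,\dd t+\E\,\mathcal{S},
\end{align*}
where $\mathcal{S}=\sum_{j\ge1}\int_0^T\!\int_\T g_j(x,u)\Theta'(u)\gamma(t)\,\dd x\,\dd\beta_j(t)$ is a martingale with zero expectation. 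For the $G^2$-term I would use the growth bound $G^2(x,\xi)\le 2D(1+|\xi|^2)$ together with the structural hypothesis $\Theta''(\xi)(1+|\xi|^2)\le C_\Theta(1+\Theta(\xi))$ to get $\tfrac12 G^2(x,u)\Theta''(u)\le D C_\Theta(1+\Theta(u))$, hence this term is bounded by $DC_\Theta(|\T|T+\E\int_0^T\!\int_\T\Theta(u)\,\dd x\,\dd t)$. Choosing $\gamma=\gamma_{t_0}$ to approximate $\ind_{[0,t_0]}$ (so that $|\gamma'|\rightharpoonup\delta_{t_0}$) and passing to the limit along a suitable sequence, the first term on the left converges to (a bound on) $\E\int_\T\Theta(u(t_0))\,\dd x$ for a.e.\ $t_0$, and one obtains
\[
\E\int_\T\Theta(u(t_0))\,\dd x+\E\int_0^{t_0}\!\int\Theta''(\xi)\,\dd m\le \E\int_\T\Theta(u_0)\,\dd x + DC_\Theta|\T|T + DC_\Theta\int_0^{t_0}\E\int_\T\Theta(u(s))\,\dd x\,\dd s.
\]
A Gronwall argument in $t_0$ applied to $t_0\mapsto\E\int_\T\Theta(u(t_0))\,\dd x$ then yields $\sup_{t\in[0,T]}\E\int_\T\Theta(u(t))\,\dd x\le C(\E\int_\T\Theta(u_0)\,\dd x+1)$, and feeding this back into the displayed inequality with $t_0=T$ controls $\E\int_0^T\!\int\Theta''(\xi)\,\dd m$ as well. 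Finiteness of $\E\int_\T\Theta(u_0)\,\dd x$ when it appears is guaranteed by the growth bound on $\Theta$ (at most quadratic, by Gronwall applied to the ODE defining the bound) combined with Proposition \ref{prop:Lp}; if it is infinite the statement is vacuous.

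The main obstacle is making the insertion of $\varphi(t,x,\xi)=\gamma(t)\Theta'(\xi)$ legitimate: $\Theta'$ is not compactly supported in $\xi$, so \eqref{eq:kinformulL1} does not apply directly. I would handle this exactly as in the deterministic theory and as implicitly done in Lemma \ref{lem:loc_unif_bound2}, by first testing with $\gamma(t)\Theta'(\xi)\zeta_R(\xi)$ for a cutoff $\zeta_R$ supported in $[-2R,2R]$, equal to $1$ on $[-R,R]$, and then letting $R\to\infty$. The only new terms created by the cutoff involve $\zeta_R'$ and $\zeta_R''$, supported on $R\le|\xi|\le 2R$; using the at-most-quadratic growth of $\Theta'$, the bound $G^2\le 2D(1+|\xi|^2)$, the decay of $m$ from Definition \ref{meesL1}(ii), and the integrability of $u$ from Proposition \ref{prop:Lp} (after first establishing the finite-$k$ estimate of Lemma \ref{lem:loc_unif_bound2} to know $m$ has locally finite mass), these remainder terms vanish as $R\to\infty$. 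Working at the approximate level $u^{\kappa,\tau}$, where $u^{\kappa,\tau}$ has all moments and the kinetic measure is finite, makes all of these manipulations routine; the uniform-in-$(\kappa,\tau)$ constant $C(D,C_\Theta)$ produced above then transfers to the limit.
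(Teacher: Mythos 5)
Your proposal is correct and follows essentially the same route as the paper: test the kinetic formulation \eqref{eq:kinformulL1} with $\varphi(t,x,\xi)=\gamma(t)\Theta'(\xi)$, note that the spatial transport and diffusion terms drop out since $\varphi$ is $x$-independent, the stochastic integral has zero mean, the It\^o correction is absorbed via $\tfrac12 G^2\Theta''\le DC_\Theta(1+\Theta)$, and Gronwall closes the argument; the paper compresses your $\zeta_R$-cutoff into the phrase ``after a preliminary step of regularization,'' so you have simply filled in what the paper leaves implicit. One side remark in your write-up is incorrect: the hypothesis $\Theta''(\xi)(1+|\xi|^2)\le C_\Theta(1+\Theta(\xi))$ does \emph{not} force $\Theta$ to be at most quadratic (for instance $\Theta(\xi)=(1+\xi^2)^{p}$ satisfies it for any $p\ge 1$, with $C_\Theta$ growing in $p$), so finiteness of $\E\int_\T\Theta(u_0)\,\dd x$ is a genuine hypothesis on $u_0$ rather than a consequence of Proposition \ref{prop:Lp}; your fallback ``if it is infinite the statement is vacuous'' is the correct way to handle this, and is exactly how the lemma is then invoked in Corollary \ref{cor:regularity} (where $u_0\in L^{2p+3}$ is assumed).
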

\begin{proof}
Let $\Theta$ be as in the statement and  
$\gamma\in C^1_c([0,T))$ be nonnegative such that $\gamma(0)=1$, $\gamma'\leq 0$. 
After a preliminary step of regularization we may take $\varphi(t,x,\xi)=\gamma(t)\Theta'(\xi)$ in \eqref{eq:kinformulL1} to get
\begin{align}\label{eq:57a}
\begin{aligned}
&\E \int_0^T\int_{\mt}\Theta(u(t,x))|\gamma'(t)|\,\dd x\,\dd t +\,\E\int\Theta''(\xi)\gamma(t)\,\dd m(t,x,\xi)\\
&\lesssim \E\int_0^T\int_{\mt}\gamma(t) G^2(x,u(t,x))\Theta''(u(t,x))\,\dd x\,\dd t+\E\int_{\mt}\Theta(u_0(x))\,\dd x 
\end{aligned}
\end{align}
By assumption 
\begin{equation*}
\frac{1}{2}G^2(x,u)\Theta''(u)\leq D(1+|u|^2)\Theta''(u)\leq C(1+\Theta(u)).
\end{equation*}
Letting now $\gamma \to 1_{[0,t]}$ an application of Gronwall's Lemma finishes the proof.
\end{proof}

We proceed with an estimate which is a modification of \cite[Proposition 16]{debus2} and applies to the case multiplicative noise.

\begin{prop}\label{prop:decay1}
Let $u_0\in L^1(\Omega;L^1(\mt))$ and let $u$ be an kinetic solution to \eqref{eq}. Then
\begin{equation*}
\begin{split}
&\esssup_{0\leq t\leq T}\E\|(u(t)- 2^n)^+\|_{L^1_x}+\esssup_{0\leq t\leq T}\E\|(u(t)+ 2^n)^-\|_{L^1_x}+\frac{1}{2^n}\E m(A_{2^n})\\
&\qquad\qquad\leq C\big(T,\E\|u_0\|_{L^1_x}\big)\delta(n),\quad\forall n\in\mn_0,
\end{split}
\end{equation*}
where
$$A_{2^n}=[0,T]\times\mt\times\{\xi\in\mr;\,2^n\leq |\xi|\leq 2^{n+1}\}$$ and $\delta(n)$ depends only on $D$ and on the functions
\begin{equation*}
R\mapsto\E\|(u_0-R)^+\|_{L^1_x},\qquad R\mapsto\E\|(u_0+R)^-\|_{L^1_x},
\end{equation*}
and satisfies $\lim_{n\to\infty}\delta(n)=0.$

Furthermore, 
\begin{equation}\label{equiint23}
\begin{split}
&\E\Big(\esssup_{0\leq t\leq T}\|(u(t)- 2^{n})^+\|_{L^1_x}\Big)+\E\Big(\esssup_{0\leq t\leq T}\|(u(t)+ 2^{n})^-\|_{L^1_x}\Big)\\
&\qquad\qquad\leq C\big(T,D,\E\|u_0\|_{L^1_x}\big)\big[\delta(n)+\delta^{\frac{1}{2}}(n)\big],
\end{split}
\end{equation}
where $\delta(n)$ is as above, in addition possibly depending on the function
 $$R\mapsto \E\|(1+|u|)1_{R\le |u|}\|_{L^1_{x,t}}.$$
\end{prop}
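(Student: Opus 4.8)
The plan is to prove Proposition~\ref{prop:decay1} in two stages: first the "essential supremum in time'' bounds together with the decay of the kinetic measure, and then the stronger "expectation of the essential supremum'' bound \eqref{equiint23}.

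\textbf{Step 1: The $\esssup$-in-time estimates.} I would apply the kinetic formulation \eqref{eq:kinformulL1} with the test function $\varphi(t,x,\xi)=\gamma(t)\chi_R(\xi)$ where $\chi_R$ is a smooth approximation of $\ind_{\xi>R}$ (so $\chi_R'\ge 0$ is an approximate Dirac at $R$) and $\gamma\in C^1_c([0,T))$ is nonnegative with $\gamma(0)=1$, $\gamma'\le 0$; after a preliminary regularization this is justified. Taking expectations kills the stochastic integral and yields, for $R>0$,
\begin{equation*}
\E\|(u(t)-R)^+\|_{L^1_x}+\E\!\int_{[0,t]\times\mt\times[R,\infty)}\!\!\dif m
\le \E\|(u_0-R)^+\|_{L^1_x}+\tfrac12\E\!\int_0^t\!\!\int_{\mt} G^2(x,u)\ind_{u=R}^{\mathrm{smoothed}}\,\dif x\,\dif s,
\end{equation*}
where the last term comes from the $\partial_\xi\varphi$ contribution of the It\^o correction. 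The key point, exactly as in \cite[Proposition~16]{debus2}, is that the parabolic/It\^o defect term is controlled by $G^2\lesssim D(1+|\xi|^2)$ \emph{localized near $\xi=R$}, so it behaves like $D(1+R^2)\cdot(\text{mass of }\nu\text{ near }R)$. Summing these inequalities dyadically over scales, or rather testing with $\Theta_k'$-type functions localized to the annulus $\{2^n\le|\xi|\le 2^{n+1}\}$ as in Lemma~\ref{lem:loc_unif_bound2} and Lemma~\ref{lem:energy_bounds}, one obtains that $\frac{1}{2^n}\E\, m(A_{2^n})$ and the two truncated-mass terms are bounded by a quantity $\delta(n)$ that involves only $D$, $T$, $\E\|u_0\|_{L^1}$ and the tail functions $R\mapsto\E\|(u_0\mp R)^{\pm}\|_{L^1_x}$. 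That $\delta(n)\to 0$ follows because $u_0\in L^1$ forces $\E\|(u_0-R)^+\|_{L^1_x}\to0$ and the $\frac{1}{2^n}(1+2^{2n})\sim 2^n$ prefactor of the defect term is offset by the fact that $\E m(A_{2^n})$ summed over $n$ is controlled by $\sup_t\E\|u(t)\|_{L^1}<\infty$ (Proposition~\ref{prop:Lp}), so the individual annular masses are summable after the $2^{-n}$ weight; one then uses a Cesàro/tail argument to conclude the limit is zero. The $(u+2^n)^-$ statements are symmetric (test with $\ind_{\xi<-R}$).

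\textbf{Step 2: From $\esssup$ inside $\E$ to $\E\,\esssup$.} For \eqref{equiint23} one cannot take expectations first. Instead, keeping $\gamma\to\ind_{[0,t]}$ in \eqref{eq:kinformulL1}, one gets $\p$-a.s., for each fixed $t$,
\begin{equation*}
\|(u(t)-R)^+\|_{L^1_x}\le \|(u_0-R)^+\|_{L^1_x}+\tfrac12\!\int_0^t\!\!\int_{\mt}G^2\theta_R(u)\,\dif x\,\dif s+M_R(t),
\end{equation*}
where $M_R(t)=\sum_k\int_0^t\int_\mt g_k(x,u)\chi_R(u)\,\dif x\,\dif\beta_k$ is a martingale, and one must take $\sup_t$ before $\E$. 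The drift term is handled by the $\esssup$-in-$\E$ bound from Step~1 (it is increasing in $t$, so its $\sup$ equals its value at $T$ and its expectation is $\le C\delta(n)$). For the martingale term use Burkholder--Davis--Gundy:
\begin{equation*}
\E\sup_{t\le T}|M_R(t)|\lesssim \E\Big(\int_0^T\sum_k\Big(\int_\mt g_k(x,u)\chi_R(u)\dif x\Big)^2\dif t\Big)^{1/2}
\lesssim \E\Big(\int_0^T\|(1+|u|)\ind_{R\le|u|}\|_{L^1_x}^2\dif t\Big)^{1/2},
\end{equation*}
using \eqref{skorolip}/\eqref{growth_new} and $\chi_R$ supported in $\{|\xi|\gtrsim R\}$. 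This is bounded by $\big(\E\|(1+|u|)\ind_{R\le|u|}\|_{L^1_{x,t}}\big)^{1/2}$ by Cauchy--Schwarz together with the uniform bound $\E\sup_t\|u\|_{L^1}<\infty$, giving the $\delta^{1/2}(n)$ term and the extra dependence on the function $R\mapsto\E\|(1+|u|)\ind_{R\le|u|}\|_{L^1_{x,t}}$. Collecting the $L^1$-drift piece (order $\delta(n)$) and the martingale piece (order $\delta^{1/2}(n)$) gives \eqref{equiint23}.

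\textbf{Main obstacle.} The delicate point is \emph{not} the algebra but extracting genuine decay: the It\^o defect term carries a factor $G^2\sim D(1+\xi^2)$ which at scale $\xi\sim 2^n$ is of size $2^{2n}$, and after the natural $2^{-n}$ normalization this is still $2^n\to\infty$. One must therefore argue that this is multiplied by the $\nu$-mass near level $2^n$, which is \emph{small} — quantitatively, $\E\|(1+|u|)\ind_{2^n\le|u|}\|_{L^1_{x,t}}\to0$ since $u\in L^1(\Omega\times[0,T]\times\mt)$ in fact has the stronger integrability $\sup_t\E\|u\|_{L^1}<\infty$ — and that the recursion in $n$ does not amplify errors. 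This replacement of the classical deterministic decay condition $\int m\,\dif t\,\dif x\to0$ (which the multiplicative stochastic term destroys) by the weaker annular decay \eqref{eq:m_decay} is exactly the conceptual novelty flagged in the introduction, and making the tail bound $\delta(n)$ depend only on the $L^1$-tail of $u_0$ (plus, for \eqref{equiint23}, the $L^1_{x,t}$-tail of $u$ itself) is where the care lies.
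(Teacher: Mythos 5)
Your Step 2 (taking $\sup_t$ before $\E$, controlling the martingale by Burkholder--Davis--Gundy and bounding the quadratic variation in terms of $\E\|(1+|u|)\ind_{R\le|u|}\|_{L^1_{x,t}}$, yielding the extra $\delta^{1/2}(n)$ term) matches the paper's Step 2 and is essentially correct. The problem is Step 1, which contains a genuine gap.

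\textbf{The gap.} You bound the It\^o defect term by $D(1+R^2)\cdot(\nu\text{-mass near }R)$, which after the $k$-scaling becomes a quantity of the order of $\E\|(1+|u|)\ind_{R\le|u|}\|_{L^1_{x,t}}$, and then invoke a ``Ces\`aro/tail argument.'' Even granting such a bound, the resulting $\delta(n)$ would depend on the $L^1$-tail of the \emph{solution} $u$, not only on $D$, $T$, $\E\|u_0\|_{L^1}$ and the tail functions of $u_0$ as the first statement of the Proposition requires. This is not a cosmetic inaccuracy: in the existence proof (Theorem \ref{thm:ex}, Step 2), the assertion $\lim_{\ell\to\infty}\delta(\ell)=0$ \emph{uniformly in the approximation parameter} $\kappa$ rests precisely on the fact that $\delta(\ell)$ depends only on the initial data. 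If $\delta(n)$ involved the tail of $u^\kappa$, one would need to establish equi-integrability in $L^1$ of the family $(u^\kappa)$ separately, which is essentially what Proposition \ref{prop:decay1} is meant to provide---the argument would be circular. Moreover, the ``Ces\`aro/tail argument'' is not spelled out, and as stated it does not yield $\lim_n\delta(n)=0$: Lemma \ref{lem:energy_bounds} only gives $2^{-n}\E\,m(A_{2^n})\lesssim\E\|(u_0-2^n)^+\|_{L^1}+1$, with an additive $O(1)$ constant that does not vanish.

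\textbf{What is missing.} The paper's argument is a two-ingredient contraction. One tests \eqref{eq:kinformulL1} with $\varphi(t,x,\xi)=\gamma(\eta t)\Theta_k'(\xi)$, where $\Theta_k''=\theta_k=\frac1k\ind_{[k,2k]}$, $\Theta_k(u)\approx(u-k)^+$, and---this is the crucial twist---the time variable is rescaled by the factor $\eta=(21D)\vee1$. The pointwise inequality $k\,\ind_{k\le u\le 2k}\le 4(u-3k/4)^+$ converts the defect term into a quantity of the same form as the left-hand side, with coefficient $\alpha=4D(1+4k^2)/(\eta k^2)$; the large $\eta$ forces $\alpha<1$. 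Setting $\psi_n(t)=\E\|(u(t)-2^nl)^+\|_{L^1_x}$ ($l=3/4$) one arrives at the recursion $\psi_{n+1}(t)\le\alpha\int_0^t\psi_n(s)\,\dif s+\psi_n(0)$, which can be solved explicitly to give $\psi_{n+1}(t)\le(M+1)e^T\delta(n+1)$ with $\delta(n)=\alpha^n+\sum_{k=0}^{n-1}\alpha^k\psi_{n-1-k}(0)$. Then $\delta(n)\to0$ because $\alpha<1$ and $\psi_j(0)=\E\|(u_0-2^jl)^+\|_{L^1}\to0$, and $\delta(n)$ only depends on $u_0$, $D$ and $T$. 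This contraction mechanism is what replaces the deterministic decay condition $\int m\to0$, and it is absent from your sketch.
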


\begin{proof}
{\em Step 1:} For $k>0$, set
$$\theta_k(u)=\frac{1}{k}\ind_{k\leq u\leq 2k},\qquad\Theta_k(u)=\int_0^u\int_0^r\theta_k(s)\,\dd s\,\dd r.$$
Let $\eta=(21D)\vee 1$, $\gamma\in C^1_c([0,\eta T))$ be nonnegative such that $\gamma(0)=1$, $\gamma'\leq 0$. 
Then, after a preliminary step of regularization, we may take $\varphi(t,x,\xi)=\gamma(\eta t)\Theta'_{k}(\xi)$ in \eqref{eq:kinformulL1} to get
\begin{align}\label{eq:57-2}
\begin{aligned}
\eta\E\int_0^T\int_{\mt}&\Theta_k(u(t,x))|\gamma'(\eta t)|\,\dd x\,\dd t+\frac{1}{k}\,\E\int_{A^+_k}\gamma(\eta t)\,\dd m(t,x,\xi)\\
&=\frac{1}{2}\,\E\int_0^T\int_{\mt}\gamma(\eta t) G^2(x,u(t,x))\theta_k(u(t,x))\,\dd x\,\dd t+\E\int_{\mt}\Theta_k(u_0(x))\,\dd x
\end{aligned}
\end{align}
where $A^+_{k}=[0,T]\times\mt\times\{\xi\in\mr;\,k\leq \xi\leq 2k\}$. Note that
\begin{equation*}
 \bigg(u-\frac{3}{2}k\bigg)^+\leq \Theta_k(u)\leq (u-k)^+,
\end{equation*}
and, using \eqref{growth_new},
\begin{equation}\label{theta}
\frac{1}{2 \eta}G^2(x,u)\theta_k(u)\leq D\frac{1+|u|^2}{\eta k}\ind_{k\leq u\leq 2k}\leq D\frac{1+4k^2}{ \eta k}\frac{(u-l)^+}{k-l}
\end{equation}
for all $k>l\geq 0,\,u\in\mr$. We choose $l=\frac{3}{4}k$ and observe, by choice of $\eta$ and for $k\ge 1$,
\begin{equation}\label{alpha}
\alpha:=D\frac{1+4k^2}{\eta k(k-l)} =4D\frac{1+4k^2}{\eta k^2} <1. 
\end{equation}
Consequently, we deduce from \eqref{eq:57-2} that
\begin{align}\label{it}
\begin{aligned}
&\E\int_0^T\int_{\mt}(u(t,x)-2l)^+|\gamma'(\eta t)|\,\dd x\,\dd t\\
&\qquad\leq \alpha\,\E\int_0^T\int_{\mt}(u(t,x)-l)^+\gamma(\eta t)\,\dd x\,\dd t+\E\int_{\mt}(u_0(x)-l)^+\,\dd x.
\end{aligned}
\end{align}
We can now iterate the above procedure by replacing $k$
by $2 k$.
To do so, we need to check that applying \eqref{eq:57-2} to $\tilde k=2 k$, the chosen constant $\alpha$ appearing in \eqref{it} is the same as before. 
The condition \eqref{alpha} now reads
$$4D\frac{1+4(2k)^2}{4 \eta k^2}<1, $$
which holds true due to \eqref{alpha}.
We deduce that
\begin{align*}
&\E\int_0^T\int_{\mt}(u(t,x)-4l)^+|\gamma'(\eta t)|\,\dd x\,\dd t\\
&\qquad\leq \alpha\,\E\int_0^T\int_{\mt}(u(t,x)-2l)^+\gamma(\eta t)\,\dd x\,\dd t+\E\int_{\mt}(u_0(x)-2l)^+\,\dd x.
\end{align*} 
Since the same argument can be applied to $\tilde k=2^nk$, $\tilde l=2^nl$ for any $n\in \N$, we set
$$\psi_n(t):=\E\int_{\mt}(u(t,x)-2^nl)^+\,\dd x$$
and after letting $\gamma$ approximate $\ind_{[0,\eta t]}$ we finally 
obtain
\begin{align*}
\psi_{n+1}(t)\leq \alpha\int_0^t\psi_n(s)\,\dd s+\psi_n(0).
\end{align*}
Now we proceed similarly as in \cite[Proposition 16]{debus2}.
Since $(u - l)^+ \leqslant (u - 2 l)^+ + l$ we have
\[ \psi_0 (t) =\mathbb{E} \int_{\mathbb{T}^N} (u (t, x) - l)^+ d x \leqslant
   \mathbb{E} \int_{\mathbb{T}^N} (u (t, x) - 2 l)^+ d x + l = \psi_1 (t) +
   l \]
so
\[ \psi_1 (t) \leqslant \alpha \int_0^t \psi_0 (s) d s + \psi_0 (0) \leqslant
   \alpha \int_0^t \psi_1 (s) d s + \alpha l t + \psi_0 (0) \]
and Gronwall's lemma implies
\[ \psi_0 (t) \leqslant M := C (T, \| u_0 \|_{L^1}) . \]
Thus we deduce
\[ \psi_1 (t) \leqslant \alpha t M + \psi_0 (0), \qquad \psi_2 (t) \leqslant
   \alpha^2 \frac{t^2}{2} M + \alpha t \psi_0 (0) + \psi_1 (0), \]
   and generally
\[ \psi_{n + 1} (t) \leqslant \alpha^{n + 1} \frac{t^{n + 1}}{n + 1!} M +
   \sum_{k = 0}^n \alpha^k \frac{t^k}{k!} \psi_{n - k} (0) . \]
Let
\[ \delta (n) = \alpha^n + \sum_{k = 0}^{n - 1} \alpha^k \psi_{n - 1 - k} (0),
\]
which satisfies $\delta(n)\to 0$ as $n\to \infty$ due to the assumption on $u_0$.
Therefore, it follows that
\[ \psi_{n + 1} (t) \leqslant (M + 1) \mathrm{e}^T \delta (n + 1)  \]
and consequently
$$\esssup_{0\leq t\leq T}\E\int_{\mt}(u(t,x)-2^{n+1}l)^+\,\dd x\leq C\big(T,\E\|u_0\|_{L^1_x}\big)\delta(n+1).$$
Thus, as a consequence of \eqref{eq:57-2}, \eqref{theta} and the fact that $k>l$ we get
\begin{align*}
\esssup_{0\leq t\leq T}\E\int_{\mt}(u(t,x)-2^{n}k)^+\,\dd x+\frac{1}{2^nk \eta}\E\int_{A^+_{2^nk}}\dd m(t,x,\xi)\leq C(T,\E\|u_0\|_{L^1_x})\delta(n).
\end{align*}
Choosing $k=1$ finishes the proof.

{\em Step 2:} 
To prove \eqref{equiint23} we start similarly as in {\em Step 1} but take the supremum in time before taking the expectation. The iterative inequality then reads
\begin{align*}
&\E\esssup_{0\leq t\leq T}\int_{\mt}(u(t,x)-2^{n+1}l)^+\,\dd x\,\dd t\\
\leq& \alpha\,\E\int_0^T\int_{\mt}(u(t,x)-2^nl)^+\,\dd x\,\dd t+\E\int_{\mt}(u_0(x)-2^nl)^+\,\dd x\\
& +\E\sup_{0\leq t\leq T}\bigg|\sum_{j\geq 1}\int_0^t\int_{\mt}g_j(x,u(t,x))\Theta'_{2^nk}(u(t,x))\,\dd x\,\dd \beta_j(t)\bigg|\\
\leq& C\big(T,\alpha,\E\|u_0\|_{L^1_x}\big)\delta(n) +C\,\E\bigg(\int_0^T\int_{\mt}G^2(x,u(t,x))|\Theta'_{2^nk}(u(t,x))|^2\,\dd x\,\dd t\bigg)^\frac{1}{2}.
\end{align*}
Using \eqref{growth_new} we estimate
\begin{align*}
&\E\sup_{0\leq t\leq T}\bigg|\sum_{j\geq 1}\int_0^t\int_{\mt}g_j(x,u(t,x))\Theta'_{2^nk}(u(t,x))\,\dd x\,\dd \beta_j(t)\bigg|\\
&\quad\lesssim \E\bigg(\int_0^T\sum_j\bigg(\int_{\mt} g_j(u)\Theta'_{2^nk}(u)\,\dif x\bigg)^2\dif t\bigg)^\frac{1}{2}\\
&\quad\lesssim\E\bigg(\int_0^T\bigg(\int_{\mt} (1+|u|)\ind_{2^nk\leq u}\,\dif x\bigg)^2\dif t\bigg)^\frac{1}{2}\\
&\quad \leq\E\bigg(\sup_{0\leq t\leq T}\|1+|u|\|_{L^1_x}\int_0^T\int_{\mt}(1+|u|)\ind_{2^nk\leq u}\dif x\,\dif t\bigg)^\frac{1}{2}\\
&\quad\leq \Big(\E\sup_{0\leq t\leq T}\|1+|u|\|_{L^1_x}\Big)^\frac{1}{2}\bigg(\E\int_0^T\int_{\mt}(1+|u|)\ind_{2^nk\leq u}\,\dif x\,\dif t\bigg)^\frac{1}{2}.
\end{align*}
The right hand side converges to $0$ as $n\to\infty$ due to the dominated convergence theorem. Hence the estimate \eqref{equiint23} follows using Proposition \ref{prop:Lp}.
\end{proof}

Based on the equiintegrability estimate \eqref{equiint23} we can deduce that kinetic solutions have continuous paths in $L^1(\mt)$ a.s.
\begin{cor}[Continuity in time]
Let $u_0\in L^1(\Omega;L^1(\mt))$ and let $u$ be a kinetic solution to \eqref{eq}. Then there exists a representative of $u$ with almost surely continuous trajectories in $L^1(\mt)$.
\end{cor}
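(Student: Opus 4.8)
The plan is to build on Proposition~\ref{prop:left-right} and Theorem~\ref{thm:uniqueness}, which already provide left- and right-continuous (in the sense of distributions on $\T\times\R$) representatives $f^\pm=\ind_{u^\pm>\xi}$ with $u^+(t)=u^-(t)$ for all $t$ outside a countable set $Q$, and to upgrade this to continuity in $L^1(\T)$ using the equi-integrability estimate~\eqref{equiint23} together with the fact that a kinetic function is $\{0,1\}$-valued. Throughout, set $u^\pm(t):=\int_\R(f^\pm(t,x,\xi)-\ind_{0>\xi})\,\dif\xi$, which are genuine representatives of $u$.

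First I would show that $t\mapsto u^+(t)$ is right-continuous, and $t\mapsto u^-(t)$ left-continuous, as maps into $L^1(\T)$, $\p$-a.s. Fix $t^\ast$, a sequence $t_n\downarrow t^\ast$, and $\varphi\in L^\infty(\T)$. Splitting $\int_\R\cdots\dif\xi$ at $|\xi|=R$, the tail is dominated by $\|(|u^+(t)|-R)^+\|_{L^1(\T)}$, which by~\eqref{equiint23} (using monotonicity in $R$ and passing to the $\p$-a.s.\ limit along $R=2^m$) tends to $0$ uniformly in $t\in[0,T]$, $\p$-a.s.; on the bounded part one approximates $\varphi$ in $L^1(\T)$ by smooth functions and $\ind_{|\xi|\le R}$ by $\beta_R\in C_c^\infty(\R)$, and applies the distributional right-continuity of $f^+$ from Proposition~\ref{prop:left-right} to the test function $\beta_R\varphi_\varepsilon\in C_c^\infty(\T\times\R)$. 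Hence $f^+(t_n)\to f^+(t^\ast)$ weakly-$\ast$ in $L^\infty(\T\times[-R,R])$ for every $R>0$, and since $f^+$ is $\{0,1\}$-valued the identity
\[
\int_\T\!\!\int_{-R}^{R}|f^+(t_n)-f^+(t^\ast)|\,\dif\xi\,\dif x=\int_\T\!\!\int_{-R}^{R}\big(f^+(t_n)+f^+(t^\ast)-2f^+(t_n)f^+(t^\ast)\big)\,\dif\xi\,\dif x
\]
together with weak-$\ast$ convergence (testing against $1$ and against the fixed $L^1(\T\times[-R,R])$-function $f^+(t^\ast)$) forces the left-hand side to $0$. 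Combining this with the uniform tail bound yields $\|u^+(t_n)-u^+(t^\ast)\|_{L^1(\T)}\to0$; the argument for $u^-$ is symmetric.

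It remains to exclude jumps at $t^\ast\in Q$, and here I would use conservation of mass. Testing~\eqref{eq:kinformulL1} with $\xi$-independent test functions (after a standard cut-off in $\xi$ and a limit passage using Definition~\ref{meesL1}(ii) and the $L^1$-integrability of $u$), the first- and second-order terms drop out and one obtains that $t\mapsto\int_\T u(t)\,\dif x=\int_\T u_0\,\dif x+\sum_k\int_0^t\!\int_\T g_k(x,u)\,\dif x\,\dif\beta_k$ admits a continuous modification; by the previous step its one-sided limits at $t^\ast$ equal $\int_\T u^\pm(t^\ast)\,\dif x$, so $\int_\T u^+(t^\ast)\,\dif x=\int_\T u^-(t^\ast)\,\dif x$. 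On the other hand, the kinetic formulation identifies the jump as $f^+(t^\ast)-f^-(t^\ast)=\partial_\xi m_{t^\ast}$, where $m_{t^\ast}\ge0$ is the restriction of the kinetic measure to $\{t^\ast\}$; integrating in $\xi$ shows that $x\mapsto u^+(t^\ast,x)-u^-(t^\ast,x)$ does not change sign on $\T$. A nonnegative (or nonpositive) function with vanishing integral is zero, so $u^+(t^\ast)=u^-(t^\ast)$ $\p$-a.s. Therefore $u^+=u^-$ for all $t$, so $u^+$ is simultaneously left- and right-continuous into $L^1(\T)$, i.e.\ $\p$-a.s.\ continuous, and provides the desired representative.

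The main obstacle is the first step: upgrading the soft distributional one-sided continuity to norm continuity in $L^1(\T)$. This works precisely because the kinetic function is an indicator, so that weak-$\ast$ convergence automatically self-improves to strong convergence on every bounded $\xi$-slab, while the equi-integrability in $\xi$ furnished by Proposition~\ref{prop:decay1} is what lets the tails be discarded \emph{uniformly in $t$} (control only for a.e.\ $t$ would not suffice). A secondary subtlety, treated in the last step, is that excluding time jumps requires the one-sided (entropy-dissipation) sign of the atoms of $m$, which is invisible at the level of distributional continuity.
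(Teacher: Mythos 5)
Your proposal is correct and follows essentially the same route as the paper. The paper simply invokes two external results: \cite[Lemma~17]{debus2} for the one-sided $L^1(\T)$-continuity (your step~1, upgrading the distributional one-sided continuity of Proposition~\ref{prop:left-right} to $L^1$-convergence via the $\{0,1\}$-valued self-improvement trick on bounded $\xi$-slabs together with the uniform-in-time equi-integrability from~\eqref{equiint23}), and \cite[Corollary~12]{debus} for excluding jumps at the exceptional times (your step~2, identifying $f^+(t^*)-f^-(t^*)=\partial_\xi m_{\{t^*\}}$ with $m_{\{t^*\}}\geq0$ and then using the sign together with conservation of mass). Your write-up amounts to unfolding those two citations, so the substance is identical; the only mild difference is that in step~2 one may also close the argument more directly by pairing $\partial_\xi m_{\{t^*\}}$ with the cutoff $K_\ell$ and invoking the weak decay condition of Definition~\ref{meesL1}~(ii) along an a.s.\ subsequence, without going through the conserved quantity $\int_\T u\,\dif x$.
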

\begin{proof} Based on Proposition \ref{prop:left-right}, Theorem \ref{thm:uniqueness} and \eqref{equiint23} we are in a position to apply \cite[Lemma 17]{debus2} which implies the continuity in $L^1$. Indeed, let us first show that $u^+$ constructed in Theorem \ref{thm:uniqueness} is $\p$-a.s.\ right-continuous in $L^1(\T)$. Due to Proposition \ref{prop:left-right}, we have that $f^+(t+\varepsilon)\rightharpoonup^*f^+(t)$ in $L^\infty(\T\times\R)$  $\p$-a.s. as $\varepsilon\to0$. Due to Theorem \ref{thm:uniqueness}, for all $t\in[0,T)$ the kinetic function $f^+(t)$ is at equilibrium, that is, $f^+(t,x,\xi)=\ind_{u^+(t,x)>\xi}$ for a.e.\ $(\omega,x,\xi)$. Finally, \eqref{equiint23} implies
$$\lim_{n\to\infty}\E\sup_{t\in[0,T]}\|(u^+(t)-2^n)^\pm\|_{L^1_x}=0.$$
Hence, there exists a subsequence (not relabeled) which converges $\p$-a.s., that is, $\p$-a.s.
$$\lim_{n\to\infty}\sup_{t\in[0,T]}\|(u^+(t)-2^n)^\pm\|_{L^1_x}=0.$$
Consequently, \cite[Lemma 17]{debus2} applies and yields the convergence
$$u^+(t+\varepsilon)\to u^+(t)\qquad\text{in}\qquad L^1(\T)\qquad\text{as}\qquad \varepsilon\to 0.$$
The same arguments show that $u^-$ constructed in Theorem \ref{thm:uniqueness} is $\p$-a.s.\ left-continuous in $L^1(\T)$. Finally, the fact that $u^+(t)=u^-(t)$ for all $t\in [0,T]$ can be proved as in \cite[Corollary 12]{debus}.
\end{proof}

\subsubsection{The proof of existence}\label{sec:proof_ex}

\begin{thm}\label{thm:ex}
  Let $u_0\in L^r(\Omega;L^1(\T))$ for some $r>1$ and assume \eqref{growth_new}. 
  Then there exists a kinetic solution $u$ to \eqref{eq} satisfying $u\in C([0,T];L^1(\T))$, $\p$-a.s. and for all $p,q\in[1,\infty)$ there exists a constant $C>0$ such that
  \begin{equation*}
     \E \esssup_{t\in [0,T]}\|u(t)\|_{L^p}^{pq} \le C (1+\E\|u_0\|_{L^p}^{pq}).
  \end{equation*}
\end{thm}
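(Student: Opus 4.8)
The plan is to construct $u$ as a limit of the three-layer approximation \eqref{eq:approx}, combining the uniform bounds of Subsections \ref{sec:lp-estimates}--\ref{sec:decay} with the averaging regularity of Section \ref{sec:averaging}. \textbf{Step 1: approximation and uniform bounds.} For $\kappa,\tau>0$ fixed the coefficients $A^{\kappa,\tau},B^\tau$ are bounded and $A^{\kappa,\tau}\ge\kappa\,\mathrm{Id}$, so \cite[Section 4]{dehovo} provides a unique kinetic solution $u^{\kappa,\tau}$ of \eqref{eq:approx} with kinetic measure $m^{\kappa,\tau}$ and $\nabla u^{\kappa,\tau}\in L^2(\Omega\times[0,T]\times\T)$. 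Since $u_0^\kappa\to u_0$ in $L^1(\Omega;L^1(\T))$ and the proofs of Proposition \ref{prop:Lp}, Lemma \ref{lem:loc_unif_bound2}, Lemma \ref{lem:energy_bounds} and Proposition \ref{prop:decay1} rely only on It\^o's formula, the growth bound \eqref{growth_new}, the nonnegativity of $A^{\kappa,\tau}$ and periodicity, these estimates hold for $u^{\kappa,\tau}$ with constants independent of $\kappa,\tau$; controlling the tails of $u_0^\kappa$ by those of $u_0$, one gets in particular the uniform equi-integrability
$$\lim_{R\to\infty}\sup_{\kappa,\tau}\E\,\esssup_{t\in[0,T]}\big\|(|u^{\kappa,\tau}(t)|-R)^+\big\|_{L^1(\T)}=0,$$
the bounds $\sup_{\kappa,\tau}\E\,\esssup_{t}\|u^{\kappa,\tau}(t)\|_{L^p}^{pq}<\infty$ for all $p,q$, and $\sup_{\kappa,\tau}\E\,|m^{\kappa,\tau}([0,T]\times\T\times[-R,R])|<\infty$ for each $R$.

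\textbf{Step 2: the limit $\tau\to0$.} With $\kappa>0$ fixed, $A^\kappa:=\kappa\,\mathrm{Id}+A$ is uniformly elliptic in $\tau$, so $\nabla u^{\kappa,\tau}$ is bounded in $L^2(\Omega\times[0,T]\times\T)$ uniformly in $\tau$. Using the stochastic compactness method of \cite[Subsection 4.3]{degen} — tightness of the laws of $(u^{\kappa,\tau},W)$ on a suitable path space, Skorokhod representation, and passage to the limit in the equation and in the kinetic formulation \eqref{eq:kinformulL1}, the strong $L^2$-compactness coming from the $\nabla$-bound together with the time regularity of the equation — one obtains a kinetic solution $u^\kappa$ to \eqref{eq} with $A$ replaced by $A^\kappa$ and initial datum $u_0^\kappa$. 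Since $A^\kappa$ is strictly elliptic with $(A^\kappa)^{1/2}$ locally Lipschitz, Theorem \ref{thm:uniqueness} applies to this problem, and a Gy\"ongy--Krylov argument yields convergence on the original stochastic basis. The chain rule \eqref{eq:chainruleL1} is inherited in the limit because the problem is strictly parabolic, and all the uniform bounds of Step 1 are preserved for $u^\kappa$, together with a kinetic measure $m^\kappa$ and a parabolic dissipation $n^{\phi,\kappa}$ with $m^\kappa(\varphi\phi)\ge n^{\phi,\kappa}(\varphi)$.

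\textbf{Step 3: the vanishing-viscosity limit $\kappa\to0$.} This is the crucial step. Because $A^\kappa=\kappa\,\mathrm{Id}+A$ one has $n^*A^\kappa(\xi)n\ge n^*A(\xi)n$ and $\partial_\xi\mathcal L^\kappa=\partial_\xi\mathcal L$, so the non-degeneracy \eqref{eq:non-deg-local} holds for the symbol $\mathcal L^\kappa$ uniformly in $\kappa$. Fix cutoffs $\eta_R\in C^\infty_c(\R;\R_+)$ with $\eta_R\equiv1$ on $[-R,R]$, $\supp\eta_R\subset[-2R,2R]$, $\|\eta_R\|_{C^1}\lesssim1$; by Remark \ref{rem:123} one may take $\vartheta\equiv1$, and Theorem \ref{thm:regularity} together with \eqref{eq:loc_regularity}, Lemma \ref{lem:loc_unif_bound2} and Proposition \ref{prop:Lp} gives, for some $s>0$ and some exponent $\rho>1$,
$$\sup_\kappa\big\|\bar\eta_R(u^\kappa)\big\|_{L^\rho(\Omega\times[0,T];W^{s,\rho}(\T))}<\infty,\qquad\forall R>0.$$
Since $W^{s,\rho}(\T)\hookrightarrow\hookrightarrow L^\rho(\T)$, this yields compactness of $\bar\eta_R(u^\kappa)$ in $L^1([0,T]\times\T)$; a stochastic compactness / Skorokhod argument for $(u^\kappa,m^\kappa,W)$ (using Lemma \ref{lem:loc_unif_bound2} for tightness of $m^\kappa$ on compact $\xi$-sets), followed by a diagonal procedure in $R\to\infty$ exploiting $\|\bar\eta_R(v)-v\|_{L^1}\lesssim\E\|(|v|-R)^+\|_{L^1}\to0$ uniformly in $\kappa$ by Step 1, produces a subsequence with $u^\kappa\to u$ strongly in $L^1(\Omega\times[0,T]\times\T)$ and, after a further subsequence, a.e.; by uniqueness (Theorem \ref{thm:uniqueness}) and Gy\"ongy--Krylov this may be transferred to the original space, $u$ being predictable after modification.

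\textbf{Step 4: identification and conclusion.} Along the same subsequence $m^\kappa\rightharpoonup^* m$ for a nonnegative random measure $m$ which, by Proposition \ref{prop:decay1} applied uniformly in $\kappa$, satisfies the decay condition of Definition \ref{meesL1}(ii), hence is a kinetic measure. Passing to the limit in \eqref{eq:kinformulL1} — the stochastic-integral term converges by \eqref{skorolip} and the strong $L^1$ convergence, the second-order term because $A^\kappa\to A$ locally uniformly and $u^\kappa\to u$ a.e., and $f_0=\ind_{u_0^\kappa>\xi}\to\ind_{u_0>\xi}$ — shows that $(\ind_{u>\xi},m)$ satisfies the kinetic formulation with datum $u_0$. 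For the chain rule, the uniform $L^2(\Omega\times[0,T]\times\T)$-bound on $\diver\int_0^{u^\kappa}\sqrt{\phi(\zeta)}\,(A^\kappa)^{1/2}(\zeta)\,\dif\zeta$ (a consequence of $m^\kappa\ge n^{\phi,\kappa}$ and Lemma \ref{lem:energy_bounds}), together with the strong convergence $u^\kappa\to u$ and the local uniform convergence $(A^\kappa)^{1/2}\to A^{1/2}=\sigma$, allows to identify the weak $L^2$-limit with $\diver\int_0^u\sqrt{\phi(\zeta)}\,\sigma(\zeta)\,\dif\zeta$, which verifies Definition \ref{kinsolL1}(i)--(iii), the inequality $m(\varphi\phi)\ge n^\phi(\varphi)$ passing by weak lower semicontinuity. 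The claimed $L^p$-bound follows from Proposition \ref{prop:Lp} and lower semicontinuity of the norms, and $u\in C([0,T];L^1(\T))$ $\p$-a.s.\ from the continuity Corollary established above. The main obstacle is precisely this vanishing-viscosity limit: obtaining strong $L^1$-compactness of $(u^\kappa)$ — for which, in the absence of any $BV$ or $W^{\sigma,1}$ bound under our weak assumptions on $A,B$, the averaging regularity of Section \ref{sec:averaging} is indispensable — and then checking that the chain rule, which is not stable under weak convergence, survives the passage to the limit.
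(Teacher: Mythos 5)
Your Steps 1 and 2 match the paper's Step 1 (construction of $u^{\kappa,\tau}$ from \cite[Section~4]{dehovo}, uniform bounds, and the $\tau\to 0$ limit by stochastic compactness, using the $L^2$-bound on $\nabla u^{\kappa,\tau}$ from strict ellipticity). Your Step~3, however, takes a genuinely different route from the paper for the crucial vanishing-viscosity limit $\kappa\to 0$, and it is here that a real gap appears. You propose a Skorokhod/Gy\"ongy--Krylov argument, which requires tightness of the laws of $u^\kappa$ (or at least of $\bar\eta_R(u^\kappa)$) in a strong topology such as $L^1([0,T]\times\T)$, in order to get a.e.\ convergence and hence stability of the chain rule. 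The averaging estimate supplies a uniform bound in $L^\rho(\Omega\times[0,T];W^{s,\rho}(\T))$, which gives \emph{spatial} equicontinuity but \emph{no time regularity}. For the $\tau\to 0$ step, time compactness came from the uniform $L^2$-bound on $\nabla u^{\kappa,\tau}$ (which scales like $\kappa^{-1/2}$ and is lost as $\kappa\to 0$); for $\kappa\to 0$ there is no uniform $\nabla u^\kappa$ bound, $\diver B(u^\kappa)$ need not even be in $L^1$ without growth conditions, and the kinetic-measure term only has $L^1(\Omega;\mathcal M_{t,x,\xi}\ind_{|\xi|\le R})$ control. So the tightness you invoke is not established by the ingredients you cite, and Aubin--Lions-type arguments would require additional estimates that are not available in this generality.

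The paper sidesteps this entirely. Instead of Skorokhod compactness, it proves directly that $(u^\kappa)$ is Cauchy in $L^1(\Omega\times[0,T],\mathcal P;L^1(\T))$ by a doubling-of-variables argument (mirroring the uniqueness proof), decomposing the mollification error $\eta_t(\kappa_1,\kappa_2,\varepsilon,\delta,\ell)$ into four terms $H_1,\dots,H_4$. The averaging regularity $\|(u^\kappa)^{K_\ell}\|_{L^r_{\omega,t}W^{s,r}_x}\lesssim 1$ is then used only to bound the spatial mollification error $H_2 \lesssim C_\ell\,\varepsilon^s$ at fixed time $t$, so no time compactness of $u^\kappa$ is ever needed, and the limit is obtained directly on the original stochastic basis with the original filtration, rendering Gy\"ongy--Krylov unnecessary. (Your Step~4 --- identification of $m$ via Lemma~\ref{lem:loc_unif_bound2}/Proposition~\ref{prop:decay1}, and verification of the chain rule by weak $L^2$-convergence of the parabolic dissipation against strong $L^1$-convergence of $u^\kappa$ --- does match the paper's Step~3 once strong convergence is in hand.) To repair your argument you would either need to supply a genuine time-compactness estimate uniform in $\kappa$, or replace the Skorokhod step by the paper's $L^1$-Cauchy estimate; the latter is the cleaner route.
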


\begin{proof} 
\textit{Step 1:} According to  \cite[Section 4]{dehovo}, there exists a unique kinetic solution $u^{\kappa,\tau }$ to \eqref{eq:approx}. Let us denote the corresponding kinetic measure by $m^{\kappa,\tau }$  and observe that it consists of only the parabolic dissipation measure, that is, 
\begin{align*}
  m^{\kappa,\tau }(\dif t,\dif x,\dif\xi)=|\sigma^{\kappa,\tau}(\xi)\nabla u^{\kappa,\tau}|^2\,\dif\delta_{u^{\kappa,\tau}(t,x)}(\xi)\,\dif x\,\dif t.
\end{align*}
By \cite[Theorem 5.2]{dehovo} that for all $p\in[2,\infty)$, uniformly in ${\kappa,\tau }$,
\begin{equation*}
\begin{split}
&\E\sup_{0\leq t\leq T}\|u^{\kappa,\tau }(t)\|_{L^p_x}^p+p(p-1)\E\int_0^T\int_{\mt}|u^{\kappa,\tau}|^{p-2}|\sigma^{\kappa,\tau}(u^{\kappa,\tau})\nabla u^{\kappa,\tau}|^2\dif x\,\dif t\\
&\leq C\big(1+\E\|u_0^\kappa\|_{L^p_x}^p\big),
\end{split}
\end{equation*}
and consequently
\begin{equation*}
\begin{split}
  \E\big|m^{\kappa,\tau }([0,T]\times\mt \times\mr)\big|\leq C(1+\E\|u_0^\kappa\|_{L^2_x}^2)
\end{split}\end{equation*}
with a constant $C$ independent of $\kappa,\tau $. Moreover, since $\sigma^{\kappa,\tau}\geq \sqrt{\kappa} Id$ the following space regularity holds true uniformly in $\tau$
\begin{equation}\label{spacereg}
\E\int_0^T\int_{\mt}|\nabla u^{\kappa,\tau}|^2\dif x\,\dif t\le \frac{C}{\kappa}(1+\E\|u_0^\kappa\|_{L^2_x}^2).
\end{equation}

Now, we have all in hand to apply the compactness method developed in \cite[Subection 4.3]{degen}.
To be more precise, we replace the result of \cite[Theorem 4.4]{degen} with the estimate \eqref{spacereg}, then all of the bounds from \cite[Corollary 4.5, Lemma 4.6, Corollary 4.7, Corollary 4.8]{degen} hold true uniformly in $\tau $ and therefore we obtain the results of \cite[Theorem 4.9, Proposition 4.10]{degen} as well. Namely, there exists a probability space $(\tilde\Omega,\tilde\mf,\tilde\prst)$ with a sequence of random variables $(\tilde{u}^{\kappa,\tau },\tilde{W}^\tau )$ and $(\tilde{u}^{\kappa},\tilde{W})$ such that
\begin{enumerate}
 \item the laws of $(\tilde{u}^{\kappa,\tau },\tilde{W}^\tau )$ and $({u}^{\kappa,\tau },{W})$ coincide,
 \item $\tilde{u}^{\kappa,\tau }\to \tilde u^{\kappa}$ in $L^2(0,T;L^2(\mt ))\cap C([0,T];H^{-1}(\mt ))$ a.s.,
 \item $\tilde u^{\kappa,\tau}(0)\to \tilde u^{\kappa}(0)$ in $L^2(\mt )$ a.s.,
 \item $\tilde W^\tau \to \tilde W$ in $C([0,T];\mathfrak{U}_0)$ a.s.
\end{enumerate}
Finally, we obtain the result  \cite[Lemma 4.12]{degen} (with the corresponding expression of the parabolic dissipative measure, cf. \cite[Theorem 6.4]{dehovo}). To be more precise, denoting
$$\dif\tilde m^{\kappa,\tau}(t,x,\xi)=|\sigma^{\kappa,\tau}(\tilde u^{\kappa,\tau})\nabla \tilde u^{\kappa,\tau}|^2\,\dif\delta_{\tilde u^{\kappa,\tau}(t,x)}(\xi)\,\dif t\,\dif x,$$ it holds true that

\begin{enumerate}
\item there exists a set of full Lebesgue measure $\mathcal{D}\subset[0,T]$ which contains $t=0$ such that
$$\ind_{\tilde u^{\kappa,\tau}>\xi}\rightharpoonup^* \ind_{\tilde u>\xi}\quad\text{in}\quad L^\infty(\tilde\Omega\times\mt\times\R)\quad\forall t\in[0,T],$$
\item there exists a kinetic measure $\tilde m^\kappa$ such that\footnote{The space $L^2_w(\tilde\Omega;\mathcal{M}_b([0,T]\times\mt\times\R))$ contains all weak-star-measurable mappings $n:\tilde\Omega\to \mathcal{M}_b([0,T]\times\mt\times\R)$ such that $\E\|n\|^2_{\mathcal{M}_b}<\infty$.}
$$\tilde m^{\kappa,\tau}\rightharpoonup^* \tilde m^\kappa\quad\text{in}\quad L^2_w(\tilde\Omega;\mathcal{M}_b([0,T]\times\mt\times\R)).$$
Moreover, $\tilde m^\kappa$ can be written as $\tilde n^\kappa_1+\tilde n^\kappa_2$, where
$$\dif\tilde n_1^{\kappa}(t,x,\xi)=\bigg|\diver\int_0^{\tilde u^{\kappa}}\sigma^{\kappa}(\zeta)\,\dif\zeta\bigg|^2\,\dif\delta_{\tilde u^{\kappa}(t,x)}(\xi)\,\dif t\,\dif x,$$
and $\tilde n_2^\kappa $ is a.s. a nonnegative measure on $[0,T]\times\mt\times\R$.
\end{enumerate}

As a consequence, we deduce that $(\tilde u^\kappa,\tilde m^\kappa,\tilde W)$ is a martingale kinetic solution to
\begin{equation}\label{eq:approx1}
\begin{split}
\dif u^{\kappa }+\diver(B(u^{\kappa }))\dd t&=\diver(A^{\kappa }(u^{\kappa })\nabla u^{\kappa } )\dif t+\varPhi(u^{\kappa })\dif W, \qquad x\in\mt ,\,t\in(0,T),\\
u^{\kappa }(0)&=u_0^\kappa.
\end{split}
\end{equation}
Therefore, in view of pathwise uniqueness, the same Yamada-Watanabe type argument  as in \cite[Subsection 4.5]{degen} applies and yields the existence of a unique (pathwise) kinetic solution $u^{\kappa}$  to \eqref{eq:approx1} in the sense of \cite[Definition 2.2]{dehovo}.
This solution is defined on the original stochastic basis $(\Omega,\mf,(\mf_t),\p)$ and satisfies the equation with the original Wiener process $W$.

\textit{Step 2:}
First,
we observe that the assumptions of Theorem \ref{thm:regularity}, namely, \eqref{eq:non-deg-local} is satisfied uniformly in $\kappa$. Indeed, let
  $$\mathcal{L}^\kappa(iu,in,\xi):=i(u+b(\xi)\cdot n)+n^*A^\kappa(\xi)n.$$
Then
  $$\mathcal{L}^\kappa(iu,in,\xi)=\mathcal{L}(iu,in,\xi)+\kappa|n|^2$$
hence, for some constant $C>0$,
  $$\{\xi\in\mr;\,|\mathcal{L}^\kappa(iu,in,\xi)|\leq\delta\}
  =\{\xi\in\mr;\,|\mathcal{L}(iu,in,\xi)|\leq C(\delta-\kappa|n|^2)\}
  \subset \{\xi\in\mr;\,|\mathcal{L}(iu,in,\xi)|\leq C\delta\}$$
which implies for all $\phi\in C^\infty_c(\R)$
  $$\omega^\phi_{\mathcal L^\kappa}(J;\delta)
  \leq\omega^\phi_{\mathcal L}(J;C\delta)
  \lesssim_\phi \bigg(\frac{\delta}{J^\beta}\bigg)^\alpha\qquad\forall \delta>0,\;\forall J\gtrsim 1.$$
Moreover,
  $$\mathcal L^\kappa_\xi(iu,in,\xi)= \mathcal L_\xi(iu,in,\xi)$$
and thus
  $$
\sup_{\substack{u\in\mr,n\in \mz^N\\|n|\sim J}}\sup_{\xi\in \supp\eta}|\mathcal L^\kappa_\xi(iu,in,\xi)|\leq\sup_{\substack{u\in\mr,n\in \mz^N\\|n|\sim J}}\sup_{\xi\in  \supp\eta}|\mathcal L_\xi(iu,in,\xi)|\lesssim_\phi J^{\beta}.
$$
Consequently, Theorem \ref{thm:regularity} applies and we obtain the estimate for $(u^{\kappa})^\phi=\int_\R \chi_{u^{\kappa}} \,\phi\, \dif\xi$
\begin{equation*}
\begin{split}
&\bigg(\E\int_0^T\|(u^{\kappa})^\phi(t)\|_{W^{s,r}_x}^r\dif t\bigg)^{1/r}\\
&\qquad\lesssim_\phi \del{T^\frac{1}{2}} \|u^\kappa_{0}\|^{1/2}_{L^1_{\omega,x}}   + \|u^{\kappa}\|^{1/2}_{L^1_{\omega,t,x}}+\sup_{0\leq t\leq T}\|u^{\kappa}(t)\|_{L^1_{\omega,x}}+ \| m^{\kappa}\ind_{\supp\phi}\|_{L^1_{\omega}\mathcal{M}_{t,x,\xi}} + 1
\end{split}
\end{equation*}
for some $r>1$ and $s>0$. In view of Proposition \ref{prop:Lp} and Lemma \ref{lem:loc_unif_bound2} the right hand side  can be further estimated uniformly in $\kappa$ as follows
\begin{equation}\label{average}
\begin{split}
&\bigg(\E\int_0^T\|(u^{\kappa})^\phi(t)\|_{W^{s,r}_x}^r\dif t\bigg)^{1/r}\lesssim_\phi \del{T^\frac{1}{2}} \|u_{0}\|_{L^1_{\omega,x}}   + 1.
\end{split}
\end{equation}

As the next step, we prove that $(u^{\kappa})$ is Cauchy in $L^1(\Omega\times[0,T],\mathcal{P},\dif\p\otimes\dif t;L^1(\mt))$. This is based on computations similar to Section \ref{subsec:uniq}. Accordingly, let us again fix the two standard Dirac sequences $(\varrho_\varepsilon)$ and $(\psi_\delta)$. Let us also define a cut-off function $(K_\ell)$ as follows: Let $K\in C^\infty(\mr)$ be such that $0\leq K(\xi)\leq 1$, $K\equiv 1$ if $|\xi|\leq 1$, $K\equiv 0$ if $|\xi|\geq 2$, and $|K'(\xi)|\leq 1,$  and define
$$K_\ell(\xi):=K\Big(\frac{\xi}{2^\ell}\Big),\qquad \ell\in \mn.$$

For any two approximate solutions $u^{\kappa_1},\,u^{\kappa_2}$ we have
\begin{equation}\label{doubling1}
\begin{split}
&\stred\int_{\mt }\big(u^{\kappa_1}(t)-u^{\kappa_2}(t)\big)^+\,\dif x=\stred\int_{\mt }\int_\mr f^{\kappa_1}(x,t,\xi) \bar{f}^{\kappa_2}(x,t,\xi)\,\dif \xi\,\dif x\\
&= \stred\int_{(\mt )^2}\int_{\mr^3}f^{\kappa_1}(x,t,\xi)\bar{f}^{\kappa_2}(y,t,\zeta)K_\ell(\eta)\varrho_\varepsilon(x-y)\psi_\delta(\eta-\zeta)\psi_\delta(\eta-\xi)
\,\dif \xi\,\dif\zeta\,\dif \eta\,\dif x\,\dif y\\
&\qquad+\eta_t(\kappa_1,\kappa_2,\varepsilon,\delta,\ell),
\end{split}
\end{equation}
where $\varepsilon$, $\delta$ and $\ell$ are chosen arbitrarily and their value will be fixed later. The idea now is to show that the mollification error $\eta_t(\kappa_1,\kappa_2,\varepsilon,\delta,\ell)$ can be made arbitrarily small uniformly in $\kappa_1,\kappa_2$, which will rely on the equi-integrability estimate, Proposition \ref{prop:decay1}, as well as \eqref{average} based on the averaging lemma, Theorem \ref{thm:regularity}, the a priori $L^p$-estimates, Proposition \ref{prop:Lp}, and  the bound for the kinetic measure from Lemma \ref{lem:loc_unif_bound2}. Indeed, we write
\begin{equation*}
\begin{split}
&\eta_t(\kappa_1,\kappa_2,\varepsilon,\delta,\ell)=\stred\int_{\mt }\int_\mr f^{\kappa_1}(x,t,\eta)\bar{f}^{\kappa_2}(x,t,\eta)\,\dif \eta\,\dif x\\
&\quad-\stred\int_{(\mt )^2}\int_{\mr^3}f^{\kappa_1}(x,t,\xi)\bar{f}^{\kappa_2}(y,t,\zeta)K_\ell(\eta)\varrho_\varepsilon(x-y)\psi_\delta(\eta-\zeta)\psi_\delta(\eta-\xi)
\,\dif \xi\,\dif\zeta\,\dif \eta\,\dif x\,\dif y\\
&=\stred\int_{\mt }\int_\mr f^{\kappa_1}(x,t,\eta)\bar{f}^{\kappa_2}(x,t,\eta)\big(1-K_\ell(\eta)\big)\,\dif \eta\,\dif x\\
&\quad+\bigg(\stred\int_{\mt }\int_\mr f^{\kappa_1}(x,t,\eta)\bar{f}^{\kappa_2}(x,t,\eta)K_\ell(\eta)\,\dif \eta\,\dif x\\
&\quad\quad-\stred\int_{(\mt )^2}\int_{\mr}f^{\kappa_1}(x,t,\eta)\bar{f}^{\kappa_2}(y,t,\eta)K_\ell(\eta)\varrho_\varepsilon(x-y)\,\dif \eta\,\dif x\,\dif y\bigg)\\
&\quad+\bigg(\stred\int_{(\mt )^2}\int_\mr f^{\kappa_1}(x,t,\eta)\bar{f}^{\kappa_2}(y,t,\eta)K_\ell(\eta)\varrho_\varepsilon(x-y)\,\dif \eta\,\dif x\,\dif y\\
&\quad\quad-\stred\int_{(\mt )^2}\int_{\mr^2}f^{\kappa_1}(x,t,\eta)\bar{f}^{\kappa_2}(y,t,\zeta)K_\ell(\eta)\varrho_\varepsilon(x-y)\psi_\delta(\eta-\zeta)\,\dif \eta\,\dif\zeta\,\dif x\,\dif y\bigg)\\
&\quad+\bigg(\stred\int_{(\mt )^2}\int_{\mr^2}f^{\kappa_1}(x,t,\eta)\bar{f}^{\kappa_2}(y,t,\zeta)K_\ell(\eta)\varrho_\varepsilon(x-y)\psi_\delta(\eta-\zeta)\,\dif \eta\,\dif\zeta\,\dif x\,\dif y\\
&\quad\quad-\stred\int_{(\mt )^2}\int_{\mr^3}f^{\kappa_1}(x,t,\xi)\bar{f}^{\kappa_2}(y,t,\zeta)K_\ell(\eta)\varrho_\varepsilon(x-y)\psi_\delta(\eta-\xi)\psi_\delta(\eta-\zeta)
\,\dif \xi\,\dif\zeta\,\dif \eta\,\dif x\,\dif y\bigg)\\
&=\mathrm{H}_1+\mathrm{H}_2+\mathrm{H}_3+\mathrm{H}_4
\end{split}
\end{equation*}
and  estimate each of the error terms on the right hand side separately using the above mentioned results.
First,
\begin{align*}
|\mathrm{H}_1|&\leq\E\int_{\mt}\ind_{u^{\kappa_1}(t,x)>u^{\kappa_2}(t,x)}\int_{u^{\kappa_2}(t,x)}^{u^{\kappa_1}(t,x)}\ind_{|\eta|\ge 2^\ell}\,\dif \eta\,\dif x\\
&\leq \E\int_{\mt}(u^{\kappa_1}(t,x)-2^\ell)^+\,\dif x+\E\int_{\mt}(-2^\ell-u^{\kappa_2}(t,x))^+\,\dif x.
\end{align*}
Now we observe that the result of Proposition \ref{prop:decay1} holds true for the approximate solutions $u^{\kappa_1},u^{\kappa_2}$ uniformly in $\kappa_1,\kappa_2$. Consequently,
$$|\mathrm{H}_1|\leq \delta(\ell),$$
where $\delta(\ell)$ was defined in Proposition \ref{prop:decay1}, and
$$\lim_{\ell\to\infty}\sup_{0\leq t\leq T}|\mathrm{H}_1|=0\quad\text{ uniformly in }\quad \kappa_1,\kappa_2,\varepsilon,\delta.$$
Second, in order to estimate
\begin{equation*}
\begin{split}
|\mathrm{H}_2|&=\bigg|\stred\int_{(\mt )^2}\varrho_\varepsilon(x-y)\int_\mr K_\ell(\eta)\ind_{u^{\kappa_1}(t,x)>\eta}\Big[\ind_{u^{\kappa_2}(t,x)\leq\eta}-\ind_{u^{\kappa_2}(t,y)\leq \eta}\Big]\,\dif\eta\,\dif x\,\dif y\bigg|,
\end{split}
\end{equation*}
we write
\begin{align*}
&\bigg|\int_\mr K_\ell(\eta)\ind_{u^{\kappa_1}(x)>\eta}\Big[\ind_{u^{\kappa_2}(x)\leq\eta}-\ind_{u^{\kappa_2}(y)\leq \eta}\Big]\,\dif\eta\bigg|\\
&\quad=\bigg|\int_\mr K_\ell(\eta)\ind_{u^{\kappa_1}(t,x)>\eta}\Big[\ind_{\eta\in[u^{\kappa_2}(x),u^{\kappa_2}(y))}-\ind_{\eta\in[u^{\kappa_2}(y),u^{\kappa_2}(x))}\Big]\,\dif\eta\bigg|\\
&\quad=\int_\mr K_\ell(\eta)\ind_{u^{\kappa_1}(t,x)>\eta}\Big[\ind_{\eta\in[u^{\kappa_2}(x),u^{\kappa_2}(y))}+\ind_{\eta\in[u^{\kappa_2}(y),u^{\kappa_2}(x))}\Big]\,\dif\eta\\
&\quad\leq\ind_{u^{\kappa_2}(x)<u^{\kappa_2}(y)}\int_\mr K_\ell(\eta)\Big[\ind_{u^{\kappa_2}(x)\leq\eta}-\ind_{u^{\kappa_2}(y)\leq \eta}	\Big]\dif\eta\\
&\quad\quad+\ind_{u^{\kappa_2}(y)<u^{\kappa_2}(x)}\int_\mr K_\ell(\eta)\Big[\ind_{u^{\kappa_2}(y)\leq\eta}-\ind_{u^{\kappa_2}(x)\leq \eta}\Big]\dif\eta\\
&\quad=\ind_{u^{\kappa_2}(x)<u^{\kappa_2}(y)}\int_\mr K_\ell(\eta)\Big[\chi_{u^{\kappa_2}(x)}(\eta)-\chi_{u^{\kappa_2}(y)}(\eta)\Big]\dif\eta\\
&\quad\quad+\ind_{u^{\kappa_2}(y)<u^{\kappa_2}(x)}\int_\mr K_\ell(\eta)\Big[\chi_{u^{\kappa_2}(y)}(\eta)-\chi_{u^{\kappa_2}(x)}(\eta)\Big]\dif\eta.
\end{align*}
Thus using \eqref{average}
\begin{equation*}
\begin{split}
|H_2|&\leq \E\int_{(\mt)^2}\varrho_\varepsilon(x-y)\big|(u^{\kappa_2})^{K_\ell}(t,x)-(u^{\kappa_2})^{K_\ell}(t,y)\big|\,\dif x\,\dif y,\\
&\le C_\ell\varepsilon^s.
\end{split}
\end{equation*}

Thus, for all $\ell\in\mn$,
$$\lim_{\varepsilon\to0}\int_0^T|\mathrm{H}_2|\,\dif t=0\quad\text{ uniformly in }\quad \kappa_1,\kappa_2,\delta.$$
Third,
\begin{equation*}
\begin{split}
|\mathrm{H}_3|
=&\bigg|\stred\!\int_{(\mt )^2}\!\varrho_\varepsilon(x-y)\!\int_\mr K_\ell(\eta)\ind_{u^{\kappa_1}(t,x)>\eta}\int_{\mr}\psi_\delta(\eta-\zeta)\Big[\ind_{u^{\kappa_1}(t,y)\leq\eta}-\ind_{u^{\kappa_1}(t,y)\leq\zeta}\Big]\dif\zeta\dif\eta\dif x\dif y\bigg|\\
\leq&\stred\int_{(\mt )^2}\int_\mr\varrho_\varepsilon(x-y)\,\ind_{u^{\kappa_1}(t,x)>\eta}\int_{\eta-\delta}^\eta\psi_\delta(\eta-\zeta)\,\ind_{\zeta<u^{\kappa_1}(t,y)\leq\eta}\,\dif\zeta\,\dif\eta\,\dif x\,\dif y\\
&+\stred\int_{(\mt )^2}\int_\mr\varrho_\varepsilon(x-y)\,\ind_{u^{\kappa_1}(t,x)>\eta}\int_{\eta}^{\eta+\delta}\psi_\delta(\eta-\zeta)\,\ind_{\eta<u^{\kappa_1}(t,y)\leq\zeta}\,\dif\zeta\,\dif\eta\,\dif x\,\dif y\\
\leq&\frac{1}{2}\,\stred\int_{(\mt )^2}\varrho_\varepsilon(x-y)\int_{u^{\kappa_1}(t,y)}^{\min\{u^{\kappa_1}(t,x),u^{\kappa_1}(t,y)+\delta\}}\dif \eta\,\dif x\,\dif y\\
&+\frac{1}{2}\,\stred\int_{(\mt )^2}\varrho_\varepsilon(x-y)\int_{u^{\kappa_1}(t,y)-\delta}^{\min\{u^{\kappa_1}(t,x),u^{\kappa_1}(t,y)\}}\dif \eta\,\dif x\,\dif y\\
\leq& C\delta
\end{split}
\end{equation*}
hence
$$\lim_{\delta\to0}\sup_{0\leq t\leq T}|\mathrm{H}_3|=0\quad\text{ uniformly in }\quad \kappa_1,\kappa_2,\varepsilon,\ell.$$
And finally
\begin{align*}
|\mathrm{H}_4|&=\bigg|\stred\int_{(\mt )^2}\varrho_\varepsilon(x-y)\int_{\mr^2}\ind_{u^{\kappa_2}(t,y)\le\zeta}K_\ell(\eta)\psi_\delta(\eta-\zeta)\dif\zeta\,\\
&\qquad\qquad\times\int_{\mr}\big[\ind_{u^{\kappa_1}(t,x)>\eta}-\ind_{u^{\kappa_1}(t,x)>\xi}\big]\psi_\delta(\eta-\xi)
\,\dif \xi\,\dif \eta\,\dif x\,\dif y\bigg|\\
&\leq\stred\int_{(\mt )^2}\varrho_\varepsilon(x-y)\int_{\mr^2}\ind_{u^{\kappa_2}(t,y)\le\zeta}\psi_\delta(\eta-\zeta)\int_{\eta}^{\eta+\delta}\ind_{\eta<u^{\kappa_1}(t,x)\leq \xi}\,\psi_\delta(\eta-\xi)
\,\dif \xi\,\dif\zeta\,\dif \eta\,\dif x\,\dif y\\
&\quad+\stred\int_{(\mt )^2}\varrho_\varepsilon(x-y)\int_{\mr^2}\ind_{u^{\kappa_2}(t,y)\le\zeta}\psi_\delta(\eta-\zeta)\int_{\eta-\delta}^\eta\ind_{\xi<u^{\kappa_1}(t,x)\leq \eta}\,\psi_\delta(\eta-\xi)
\,\dif \xi\,\dif\zeta\,\dif \eta\,\dif x\,\dif y\\
&\leq\stred\int_{(\mt )^2}\varrho_\varepsilon(x-y)\int_{u^{\kappa_2}(t,y)}^\infty\int^{u^{\kappa_1}(t,x)}_{u^{\kappa_1}(t,x)-\delta}\psi_\delta(\eta-\zeta)\,\dif\zeta\,\dif \eta\,\dif x\,\dif y\\
&\quad+\stred\int_{(\mt )^2}\varrho_\varepsilon(x-y)\int_{u^{\kappa_2}(t,y)}^\infty\int_{u^{\kappa_1}(t,x)}^{u^{\kappa_1}(t,x)+\delta}\psi_\delta(\eta-\zeta)\,\dif\zeta\,\dif \eta\,\dif x\,\dif y\leq\delta
\end{align*}
and therefore
$$\lim_{\delta\to0}\sup_{0\leq t\leq T}|\mathrm{H}_4|=0\quad\text{ uniformly in }\quad \kappa_1,\kappa_2,\varepsilon,\ell.$$
Heading back to \eqref{doubling1} and using the same calculations as in Proposition \ref{prop:doubling}, we deduce that
\begin{equation*}
\begin{split}
\stred&\int_{\mt }\big(u^{\kappa_1}(t)-u^{\kappa_2}(t)\big)^+\,\dif x\leq \eta_t(\kappa_1,\kappa_2,\varepsilon,\delta,\ell)+\eta_0(\kappa_1,\kappa_2,\varepsilon,\delta,\ell)+\mathrm{I}+\mathrm{J}+\mathrm{J}^\#+\mathrm{K}+L(\delta,\ell),
\end{split}
\end{equation*}
where, with $\delta(\ell)$ as in Proposition \ref{prop:decay1},
$$\lim_{\ell\to\infty}\sup_{0\leq t\leq T}L(\delta,\ell)=\lim_{\ell\to\infty}\delta(\ell)=0\quad\text{ uniformly in }\quad \kappa_1,\kappa_2,\varepsilon,\delta.$$
The terms $\mathrm{I},\,\mathrm{J},\,\mathrm{K}$ are defined and can be dealt with exactly as in Proposition \ref{prop:doubling} and Theorem \ref{thm:uniqueness}. The term $\mathrm{J}^\#$ is defined as
\begin{equation*}
\begin{split}
\mathrm{J}^\#=&(\tau+\sigma)\,\stred\int_0^t\int_{(\mt )^2}\int_{\mr^3}f^{\kappa_1}\bar{f}
^{n,\kappa_2}\Delta_x\varrho_\varepsilon(x-y)K_\ell(\eta)\psi_\delta(\eta-\xi)\psi_\delta(\eta-\zeta)\,
\dif\xi\,\dif\zeta\,\dif x\, \dif y\,\dif s\\
&-\stred\int_0^t\int_{(\mt )^2}\int_{\mr^3}\varrho_\varepsilon(x-y)K_\ell(\eta)\psi_\delta(\eta-\xi)\psi_\delta(\eta-\zeta)\,
\dif\nu^ {\kappa_1} _ { x , s }
(\xi)\,\dif x\,\dif n_{2}^{\kappa_2}(y,s,\zeta)\\
&-\stred\int_0^t\int_{(\mt )^2}\int_{\mr^3}\varrho_\varepsilon(x-y)K_\ell(\eta)\psi_\delta(\eta-\xi)\psi_\delta(\eta-\zeta)\,
\dif\nu^{\kappa_2}_{y,s}
(\zeta)\,\dif y\,\dif n_{2}^{\kappa_1}(x,s,\xi),
\end{split}
\end{equation*}
where we used the notation $\nu^{\kappa_1}_{x,s}(\xi)=\delta_{u^{\kappa_1}(s,x)}(\xi)$ and similarly for $\nu^{\kappa_2}_{y,s}(\zeta)$.
Thus,
\begin{equation*}
\begin{split}
\mathrm{J}^\#=&(\kappa_1+\kappa_2)\,\stred\int_0^t\int_{(\mt )^2}\int_\mr\varrho_\varepsilon(x-y)K_\ell(\eta)\psi_\delta(\eta-u^{\kappa_1})\psi_\delta(\eta-u^{\kappa_2})\nabla_x u^{\kappa_1}\cdot\nabla_y u^{\kappa_2}\dif\eta\dif x\dif y\dif s\\
&-\kappa_1\,\stred\int_0^t\int_{(\mt )^2}\int_\mr\varrho_\varepsilon(x-y)K_\ell(\eta)\psi_\delta(\eta-u^{\kappa_1})\psi_\delta(\eta-u^{\kappa_2})|\nabla_x u^{\kappa_1}|^2\dif\eta\dif x\dif y\dif s\\
&-\kappa_2\,\stred\int_0^t\int_{(\mt )^2}\int_\mr\varrho_\varepsilon(x-y)K_\ell(\eta)\psi_\delta(\eta-u^{\kappa_1})\psi_\delta(\eta-u^{\kappa_2})|\nabla_y u^{\kappa_2}|^2\dif\eta\dif x\dif y\dif s\\
=&-\,\stred\int_0^t\int_{(\mt )^2}\int_\mr\varrho_\varepsilon(x-y)K_\ell(\eta)\psi_\delta(\eta-u^{\kappa_1})\psi_\delta(\eta-u^{\kappa_2})\big|\sqrt{\kappa_1}\,\nabla_x u^{\kappa_1}-\sqrt{\kappa_2}\,\nabla_y u^{\kappa_2}\big|^2\dif\eta\dif x\dif y\dif s\\
&+(\sqrt{\kappa_1}-\sqrt{\kappa_2}\,)^2\,\stred\int_0^t\int_{(\mt )^2}\int_\mr\varrho_\varepsilon(x-y)K_\ell(\eta)\psi_\delta(\eta-u^{\kappa_1})\psi_\delta(\eta-u^{\kappa_2})\nabla_x u^{\kappa_1}\cdot\nabla_y u^{\kappa_2}\dif\eta\dif x\dif y\dif s\\
=&\mathrm{J}^\#_1+\mathrm{J}^\#_2.
\end{split}
\end{equation*}
The first term on the right hand side is nonpositive, for the second one we have
\begin{equation*}
\begin{split}
\big|\mathrm{J}^\#_2\big|&\leq (\sqrt{\kappa_1}-\sqrt{\kappa_2}\,)^2\,\stred\int_0^t\int_{(\mt )^2}\int_{\mr^3}f^{\kappa_1} \bar{f}^{\kappa_2}K_\ell(\eta)\psi_\delta(\eta-\xi)\psi_\delta(\eta-\zeta)\big|\Delta_x\varrho_\varepsilon(x-y)\big|\,\dif \eta\,\dif\xi\dif \zeta\dif x\dif y\dif s
\end{split}
\end{equation*}
and proceeding similarly as for $\mathrm{I}$ we get
\begin{equation*}
\begin{split}
\big|\mathrm{J}^\#_2\big|
&\leq C(\sqrt{\kappa_1}-\sqrt{\kappa_2}\,)^2\varepsilon^{-2}2^\ell.
\end{split}
\end{equation*}
Consequently, we see that 
\begin{equation*}
\begin{split}
\stred&\int_0^T\int_{\mt }\big(u^{\kappa_1}(t)-u^{\kappa_2}(t)\big)^+\,\dif x\,\dif t\lesssim_T \delta(\ell)\\
&\quad+C_\ell\varepsilon^s+\delta+\varepsilon^{-1}\delta 2^\ell\|b'\|_{L^\infty(-2\ell-\delta,2\ell+\delta)}t+ \delta^{2\gamma}\varepsilon^{-2}\|\sigma\|_{C^\gamma([-\ell-\delta,\ell+\delta])}+ \delta^{-1}\varepsilon^2+\delta\\
&\quad+(\kappa_1+\kappa_2)\,\varepsilon^{-2}2^\ell.
\end{split}
\end{equation*}
Therefore, given $\vartheta>0$ one can fix $\ell$ sufficiently large so that the first term on the right hand side is estimated by $\vartheta/3,$ then fix $\varepsilon$ and $\delta$ small enough so that the second line also estimated by $\vartheta/3$ and then find $\iota>0$ such that the third line is estimated by $\vartheta/3$ for any $\kappa_1,\kappa_2<\iota$. Thus, we have shown that the set of approximate solutions $(u^{\kappa})$ is Cauchy in $L^1(\Omega\times[0,T],\mathcal{P},\dif\prst\otimes\dif t;L^1(\mt ))$, as $\kappa\rightarrow0$. Hence there exists $u\in L^1(\Omega\times[0,T],\mathcal{P},\dif\prst\otimes\dif t;L^1(\mt ))$ such that
\begin{equation}\label{eq:sc}
u^\kappa\to u\quad\text{in }\quad L^1(\Omega\times[0,T],\mathcal{P},\dif\prst\otimes\dif t;L^1(\mt )).
\end{equation}

Since $u_0\in L^r(\Omega;L^1(\T))$ for some $r>1$, we can choose $(u_0^\kappa)$ uniformly bounded in $L^r(\Omega;L^1(\T))$. By Lemma \ref{lem:loc_unif_bound2}, we obtain that for each $k>0$
\begin{equation}\label{eq:mnr}
\sup_\kappa \E |m^\kappa(B_k)|^r \le C_k,
\end{equation}
where $B_k := [0,T]\times\mt \times [-k,k]$. Consequently, the sequence $(m^\kappa)$ is bounded in $L^r(\Omega;\mathcal{M}( B_k))$. Following the same arguments as \cite[proof of Theorem 20]{debus2}, we extract a subsequence (not relabeled) and a random Borel measure $m$ on $[0,T]\times\T\times \R$ such that $m^\kappa \rightharpoonup^* m$ weakly$^*$ in $L^r(\Omega;\mathcal{M}(B_k))$ for every $k\in\N$. 

Since the estimates derived in Proposition \ref{prop:decay1} are uniform with respect to $\kappa$, the limit $m$ satisfies Definition \ref{meesL1},  (ii). We further note that $m$ satisfies Definition \ref{meesL1}, (i), since this property is stable with respect to weak limits. Hence, $m$ is a kinetic measure.

We next check that $(u,m)$ is a kinetic solution to \eqref{eq} in the sense of Definition \ref{kinsolL1}. Let $\phi\in C^\infty_c(\mr)$ be nonnegative and denote by $\Phi$ a function satisfying $\Phi''=\phi$ and $\Phi\ge 0$. Then, similarly to \eqref{eq:57a}, we obtain
\begin{align*}
\begin{aligned}
&\E\int_{[0,T]\times\mt \times\mr}\phi(\xi)\,\dd m^\kappa(t,x,\xi)\\
&\leq\frac{1}{2}\,\E\int_0^T\int_{\mt }G^2(x,u^\kappa(t,x))\phi(u^\kappa(t,x))\,\dd x\,\dd t+\E\int_{\mt }\Phi(u^\kappa_0(x))\,\dd x.
\end{aligned}
\end{align*}
Hence, due to \eqref{growth_new} and since
 $$\E\int_{[0,T]\times\mt \times\mr}\phi(\xi)\,\dd n_1^\kappa(t,x,\xi)=\E\int_0^T\int_{\mt  }\bigg|\diver\int_0^{u^\kappa}\sqrt{\phi(\zeta)}\,\big[\sqrt{\kappa}Id+\sigma(\zeta)\big]\,\dif \zeta\bigg|^2\dif x\,\dif t,$$
we obtain that
\begin{align}\label{eq:n1_bound}
&\E\int_{0}^T\int_{\mt }\bigg|\diver\int_0^{u^\kappa}\sqrt{\phi(\zeta)}\big[\sqrt{\kappa}Id+\sigma(\zeta)\big]\,\dd\zeta\bigg|^2\dd x\,\dd t\leq C\big(T,\|\phi\|_{L^\infty},\supp\phi,\E\|u_0\|_{L^1_x}\big).
\end{align}
From the strong convergence \eqref{eq:sc} and the fact that $\sqrt{\phi}\,\sigma\in C_c(\mr)$, we conclude using integration by parts, for all $\eta\in L^2(0,T;C^1(\mt ))$, $\psi \in L^\infty(\Omega)$,
\begin{equation*}
\begin{split}
&\E\psi\int_0^T\!\int_{\mt  }\!\bigg(\diver\int_0^{u^\kappa}\sqrt{\phi(\zeta)}\,\big[\sqrt{\kappa}Id+\sigma(\zeta)\big]\,\dif \zeta\bigg)\eta(t,x)\,\dif x\,\dif t\\
&\qquad\to\E\psi\int_0^T\!\int_{\mt  }\!\bigg(\diver\int_0^u\sqrt{\phi(\zeta)}\,\sigma(\zeta)\,\dif \zeta\bigg)\eta(t,x)\,\dif x\,\dif t,
\end{split}
\end{equation*}
and therefore, using \eqref{eq:n1_bound},
\begin{equation}\label{eq:convergence_2}
\diver\int_0^{u^\kappa}\sqrt{\phi(\zeta)}\,\big[\sqrt{\kappa}Id+\sigma(\zeta)\big]\,\dif \zeta\overset{}{\rightharpoonup}\diver\int_0^u\sqrt{\phi(\zeta)}\,\sigma(\zeta)\,\dif\zeta\quad\text{ in }\quad L^2(\Omega\times[0,T]\times\mt  ).
\end{equation}
Hence, Definition \ref{kinsolL1}, (i)  is satisfied.

Concerning the chain rule formula \eqref{eq:chainruleL1}, we observe that the corresponding version  holds true for all $u^\kappa$, since $u^\kappa$ is a kinetic solution, i.e. for any $\phi_1,\phi_2\in C_c(\mr)$, $\phi_1,\phi_2\geq0$,
\begin{equation}\label{chain}
\diver\int_0^{u^\kappa}\phi_1(\zeta)\phi_2(\zeta)\big[\sqrt{\kappa}Id+\sigma(\zeta)\big]\,\dif \zeta=\phi_1(u^\kappa)\diver\int_0^{u^\kappa}\phi_2(\zeta)\big[\sqrt{\kappa}Id+\sigma(\zeta)\big]\,\dif\zeta
\end{equation}
holds true as an equality in  $L^2(\Omega\times[0,T]\times\mt  ).$
Due to \eqref{eq:convergence_2} we can pass to the limit on the left hand side and, making use of the strong-weak convergence, also on the right hand side of \eqref{chain}. In conclusion, Definition \ref{kinsolL1}, (ii) holds.

Let now \blue{$\phi\in C^\infty_c(\R)$, $\phi\geq0$, and let $n^\phi$ be defined as in Definition \ref{kinsolL1}, (iii). Since $u^\kappa$ is a kinetic solution, $(u^\kappa,m^\kappa)$ satisfy \eqref{eq:kinformulL1} with the corresponding diffusion matrix $A^\kappa$. Passing to the limit $\kappa\to0$ yields \eqref{eq:kinformulL1} for $(u,m)$ with the original diffusion matrix $A$. It remains to prove that for all $\varphi\in C^\infty_c([0,T]\times\mt)$, $\varphi\geq0$, $m(\varphi\phi) \ge n^\phi(\varphi)$ $\p$-a.s.} Since each $m^\kappa$ can be decomposed into the sum of the parabolic dissipation measure $n_1^\kappa$ and the corresponding (nonnegative) entropy dissipation measure $n_2^\kappa$, that is $m^\kappa=n_1^\kappa + n_2^\kappa$, from \eqref{eq:mnr} it follows that
$$
\sup_\kappa \E |n_1^\kappa(B_k)|^r \le C_k.
$$
By the same argument as above, we extract a subsequence (not relabeled) and a random measure $o_1$ such that $n_1^\kappa \rightharpoonup^* o_1$ weakly$^*$ in $L^r(\Omega;\mathcal{M}(B_k))$ for all $k\in\N$. Since $m^\kappa \ge n_1^\kappa$ $\prst$-a.s.\ we have $m \ge o_1$, $\prst$-a.s. Moreover, since by sequentially weak lower semicontinuity of the norm, it follows for all $\varphi\in L^\infty([0,T]\times\T)$, $\psi \in L^\infty(\Omega)$, $\p$-a.s.,
\begin{equation*}
\begin{split}
\E \psi n^\phi(\varphi^2)
&=\E \psi \int_0^T\int_{\T}\bigg|\diver\int_0^{u}\sqrt{\phi(\zeta)}\,\sigma(\zeta)\,\dif\zeta\bigg|^2\varphi^2(t,x)\,\dif x\,\dif t\\
&\leq\liminf_{\kappa\rightarrow0}\E \psi\int_0^T\int_{\T}\bigg|\diver\int_0^{u^\kappa}\sqrt{\phi(\zeta)}\,\big[\sqrt{\kappa}Id+\sigma(\zeta)\big]\,\dif \zeta\bigg|^2\varphi^2(t,x)\,\dif x\,\dif t=\E \psi o_1(\phi\varphi^2).
\end{split}
\end{equation*}
and, thus, \blue{$n^\phi$ given by \eqref{eq:par_meas} satisfies $n^\phi(\cdot)\leq o_1(\phi \,\cdot)$, $\prst$-a.s.}\ which completes the proof.
\end{proof}

\appendix

\section{Multiplier lemmas}

In this section we establish various auxiliary results concerning Fourier multipliers used in Section \ref{sec:averaging}.

We employ the following definition of the Fourier and inverse Fourier transform on $\mt=[0,2\pi]^N$
\begin{align*}
\mathcal{F}_x v(n)&=\frac{1}{(2\pi)^{N/2}}\int_{\mt}v(x)\me^{-in\cdot x}\,\dd x,\qquad n\in\mz^N,\quad v\in L^1(\mt),\\
\mathcal{F}^{-1}_x w(x)&=\frac{1}{(2\pi)^{N/2}}\sum_{n\in\mz^N}w(n)\me^{in\cdot x},\qquad x\in\mt,\quad w\in L^1(\mz^N).
\end{align*}
and correspondingly on the whole space $\mr^N$ or with respect to time.

We start with a result that will be useful in the sequel.

\begin{lemma}[Truncation property]\label{lemma:truncation}
Let $m(n,\xi)=ib(\xi)\cdot n+n^*A(\xi)n$. Then $m$ satisfies the truncation property in $L^1$ uniformly in $\xi\in\mr$, i.e. for every bump function $\psi$ and $\delta>0$ it holds true
\begin{align*}
\bigg\|\mathcal{F}_x^{-1}\psi\bigg(\frac{m(n,\xi)}{\delta}\bigg)\hat f(n)\bigg\|_{L^1_x} &\leq C\|f\|_{L^1_x},
\end{align*}
where the constant $C$ is independent of $\delta$ and $\xi$.
 
\begin{proof}
{\em Step 1:} First, we observe that it is enough to prove the claim for each of the (real) multipliers $m_1(n,\xi)=b(\xi)\cdot n$ and $m_2(n,\xi)=n^*A(\xi)n$ separately. Indeed, using Fourier series we can write
\begin{align*}
\psi\bigg(\frac{m(n,\xi)}{\delta}\bigg)&=\mathcal{F}_z^{-1}\mathcal{F}_z\psi\bigg(\frac{m(n,\xi)}{\delta}\bigg)=\sum_{j,k\in\mz}\hat\psi(j,k)\,\me^{2\pi i j \frac{m_1(n,\xi)}{\delta}}\,\me^{2\pi i k \frac{m_2(n,\xi)}{\delta}}.
\end{align*}
Let $\tilde\psi$ be a bump function that equals to 1 in a ball containing the one-dimensional projections of the support of $\psi$. Then
\begin{align*}
\psi\bigg(\frac{m(n,\xi)}{\delta}\bigg)&=\sum_{j,k\in\mz}\hat\psi(j,k)\,\me^{2\pi i j \frac{m_1(n,\xi)}{\delta}}\tilde\psi\bigg(\frac{m_1(n,\xi)}{\delta}\bigg)\,\me^{2\pi i k \frac{m_2(n,\xi)}{\delta}}\tilde\psi\bigg(\frac{m_2(n,\xi)}{\delta}\bigg)\\
&=\sum_{j,k\in\mz}\hat\psi(j,k)\tilde\psi_j\bigg(\frac{m_1(n,\xi)}{\delta}\bigg)\tilde\psi_{k}\bigg(\frac{m_2(n,\xi)}{\delta}\bigg),
\end{align*}
where $\tilde\psi_j(x):=\me^{2\pi ijx}\tilde\psi(x)$. Since $\hat\psi(j,k)$ decays rapidly in $j$ and $k$ and the $C^l$-norm of $\tilde\psi_j$ grows at most polynomially in $j$, the conclusion follows from the fact that a product of $L^1$-multipliers is an $L^1$-multiplier with the norm given by the product of the corresponding norms.

{\em Step 2:} Let us consider $m_2$ and assume in addition that $A(\xi)$ positive definite, that is $\sigma(\xi)$ is invertible. It is well-known that the set of $L^1$-Fourier multipliers coincides with the set of Fourier transforms of finite Borel measures and the norm is given by the total variation of the corresponding measure, see \cite[Theorem 2.5.8]{grafakos}. Therefore we need to estimate
$$\bigg\|\mathcal{F}_x^{-1} \psi\bigg(\frac{m_2(n,\xi)}{\delta}\bigg)\bigg\|_{L^1_x}$$
by a constant independent of $\delta$ and $\xi$.
To this end, we study the continuous version of the above norm first and then make use of the Poisson summation formula. We have
\begin{align}\label{eq:normmult}
\begin{aligned}
\bigg\|\mathcal{F}^{-1}& \psi\bigg(\frac{z^*A(\xi)z}{\delta}\bigg)\bigg\|_{L^1_y}=\frac{1}{(2\pi)^{N/2}}\int_{\mr^N}\bigg|\int_{\mr^N}\psi\bigg(\frac{z^*A(\xi)z}{\delta}\bigg)\,\me^{i y\cdot z}\,\dif z\bigg|\,\dif y\\
&=\frac{1}{(2\pi)^{N/2}}\frac{\delta^{N/2}}{|\det\sigma(\xi)|}\int_{\mr^N}\bigg|\int_{\mr^N}\psi\big(|z|^2\big)\,\me^{i y\cdot \sqrt{\delta}\sigma(\xi)^{-1} z}\,\dif z\bigg|\,\dif y\\
&=\frac{\delta^{N/2}}{|\det\sigma(\xi)|}\int_{\mr^N}\Big|\mathcal{F}^{-1}\Big[\psi\big(|\cdot|^2\big)\Big]\big(\sqrt{\delta}\sigma(\xi)^{-1}y\big)\Big|\,\dif y\\
&=\int_{\mr^N}\Big|\mathcal{F}^{-1}\Big[\psi\big(|\cdot|^2\big)\Big](y)\Big|\,\dif y.
\end{aligned}
\end{align}
Since $\psi(|\cdot|^2)$ is a bump function, it is a Fourier transform of some function, say $\varphi$, from the Schwartz space hence
\begin{equation*}
\bigg\|\mathcal{F}^{-1}\psi\bigg(\frac{z^*A(\xi)z}{\delta}\bigg)\bigg\|_{L^1_y}=\|\varphi\|_{L^1_y}.
\end{equation*}
In other words, uniformly in $\delta $ and $\xi$,
$$\psi\bigg(\frac{z^*A(\xi)z}{\delta}\bigg)$$
is a Fourier transform of a finite Borel measure $\varphi$ hence it is an $L^1$-multiplier on the continuous space $\mr^N$. Let us now define
$$\varphi^{\text{per}}(x):=\sum_{l\in \mz^N}\varphi(x+l).$$
Then
$$\|\varphi^{\text{per}}\|_{L^1(\mt)}\leq \|\varphi\|_{L^1(\mr^N)}$$
and
\begin{align*}
\mathcal{F}_{\mt}\varphi^{\text{per}}(n)&=\frac{1}{(2\pi)^{N/2}}\int_{\mt}\sum_{l\in\mz^N}\varphi(x+l)\me^{- i n\cdot x}\dd x=\frac{1}{(2\pi)^{N/2}}\int_{\mt}\sum_{l\in\mz^N}\varphi(x+l)\me^{- i n\cdot (x+l)}\dd x\\
&=\frac{1}{(2\pi)^{N/2}}\int_{\mr^N}\varphi(x)\me^{-i n\cdot x}\dd x=\mathcal{F}_{\mr^N}\varphi(n)=\psi\bigg(\frac{n^*A(\xi)n}{\delta}\bigg)
\end{align*}
so we deduce that, uniformly in $\delta$ and $\xi$,
$$\psi\bigg(\frac{n^*A(\xi)n}{\delta}\bigg)$$
is an $L^1$-multiplier on $\mt$.

{\em Step 3:} Let us now assume that $A(\xi)$ is degenerate. If $A(\xi)=0$ then clearly
\begin{align*}
\bigg\|\mathcal{F}_x^{-1}\psi\bigg(\frac{m_2(n,\xi)}{\delta}\bigg)\hat f(n)\bigg\|_{L^1_x}&=\psi(0)\|f\|_{L^1_x}.
\end{align*}
Let $A(\xi)$ be a matrix with rank $K$. It was seen in \eqref{eq:normmult} that the desired multiplier norm is invariant under invertible linear transformations and therefore we may assume without loss of generality that $A(\xi)$ is diagonal with all the eigenvalues equal to 1. Then we denote by $z^K=(z_1,\dots,z_J)$, $z^{N-K}=(z_{K+1},\dots,z_N)$ and proceed as above to obtain
\begin{align*}
\bigg\|\mathcal{F}^{-1}& \psi\bigg(\frac{|z^K|^2}{\delta}\bigg)\bigg\|_{L^1_y}\\
&=\frac{1}{(2\pi)^{N/2}}\int_{\mr^N}\bigg|\int_{\mr^{N-K}}\me^{i y^{N-K}\cdot z^{N-K}}\int_{\mr^K}\psi\bigg(\frac{|z^K|^2}{\delta}\bigg)\,\me^{i y^K\cdot z^K}\,\dif z^K\,\dif z^{N-K}\bigg|\,\dif y\\
&=\frac{1}{(2\pi)^{N/2}}\delta^{K/2}\int_{\mr^N}\bigg|\int_{\mr^{N-K}}\me^{i y^{N-K}\cdot z^{N-K}}\,\dif z^{N-K}\,\mathcal{F}_{y^K}^{-1}\Big[\psi\big(|\cdot|^2\big)\Big]\big(\sqrt{\delta}y^K\big)\bigg|\,\dif y\\
&=\delta^{K/2}\int_{\mr^K}\Big|\mathcal{F}_{y^K}^{-1}\Big[\psi\big(|\cdot|^2\big)\Big]\big(\sqrt{\delta}y^K\big)\Big|\,\dif y^K\\
&=\int_{\mr^K}\Big|\mathcal{F}_{y^K}^{-1}\Big[\psi\big(|\cdot|^2\big)\Big](y^K)\Big|\,\dif y^K.
\end{align*}
Since $\psi(|\cdot|^2)$ is a bump function on $\mr^K$ there exists $\varphi$ from the Schwartz space $\mathcal{S}(\mr^K)$ such that $\mathcal{F}_{y^K}^{-1}\psi(|\cdot|^2)=\varphi(\cdot)$.
Hence
\begin{align*}
\bigg\|\mathcal{F}^{-1}& \psi\bigg(\frac{|z^K|^2}{\delta}\bigg)\bigg\|_{L^1_y}=\big\|\mathcal{F}_{y^K}^{-1}\psi(|\cdot|^2)\big\|_{L^1_{y^K}}=\|\varphi\|_{L^1_{y^K}}
\end{align*}
so, uniformly in $\delta$,
$$\psi\bigg(\frac{|z^K|^2}{\delta}\bigg)$$
is an $L^1$-multiplier on the continuous space $\mr^N$. The proof for the discreet multiplier
$$\psi\bigg(\frac{|n^K|^2}{\delta}\bigg)$$
follows the reasoning of {\em Step 2} by including only minor modifications.

{\em Step 4:} The same calculations lead to the desired estimate for the case of $m_1$.
\end{proof}
\end{lemma}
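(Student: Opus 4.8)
The final statement is the Truncation Property, Lemma \ref{lemma:truncation}. Here is my plan for the proof.

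\textbf{Overall strategy.} The plan is to reduce everything to the classical characterization of $L^1$-Fourier multipliers as Fourier transforms of finite Borel measures (equivalently, functions whose inverse Fourier transform is in $L^1$), and then exploit the fact that the symbol $m(n,\xi)=ib(\xi)\cdot n + n^*A(\xi)n$ is, for each fixed $\xi$, an affine-plus-quadratic form in $n$. The key structural observation is that dilation and invertible linear changes of the frequency variable do not change the relevant $L^1$-multiplier norm, so the dependence on $\delta$ and on $\xi$ can be scaled away. The periodic case is then recovered from the continuous case via Poisson summation.

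\textbf{Step 1: splitting the symbol.} First I would separate the real and imaginary parts: write $m_1(n,\xi)=b(\xi)\cdot n$ and $m_2(n,\xi)=n^*A(\xi)n$, so that $m=im_1+m_2$. Using the Fourier series of $\psi$ in two variables, $\psi(\tfrac{m}{\delta})=\sum_{j,k}\hat\psi(j,k)\,e^{2\pi i j m_1/\delta}e^{2\pi i k m_2/\delta}$, and inserting auxiliary bump functions $\tilde\psi$ equal to $1$ on the projections of $\supp\psi$, one rewrites $\psi(\tfrac{m}{\delta})$ as an absolutely convergent sum $\sum_{j,k}\hat\psi(j,k)\,\tilde\psi_j(\tfrac{m_1}{\delta})\tilde\psi_k(\tfrac{m_2}{\delta})$ with $\tilde\psi_j(x)=e^{2\pi ijx}\tilde\psi(x)$. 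Since $\hat\psi(j,k)$ decays faster than any polynomial while the $C^\ell$-norms of $\tilde\psi_j,\tilde\psi_k$ grow only polynomially, and since a product of $L^1$-multipliers is an $L^1$-multiplier with norm the product of the norms, it suffices to prove the bound separately for multipliers of the form $\Psi(\tfrac{m_1(n,\xi)}{\delta})$ and $\Psi(\tfrac{m_2(n,\xi)}{\delta})$ for an arbitrary bump function $\Psi$.

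\textbf{Step 2: the quadratic part, nondegenerate case.} For $m_2$ with $A(\xi)$ positive definite (so $\sigma(\xi)=A(\xi)^{1/2}$ invertible), I would compute the continuous analogue $\|\mathcal F^{-1}\Psi(\tfrac{z^*A(\xi)z}{\delta})\|_{L^1_y}$ by the substitution $z\mapsto \sqrt\delta\,\sigma(\xi)^{-1}z$: the Jacobian factor $\delta^{N/2}/|\det\sigma(\xi)|$ is exactly cancelled by the change of variables on the $y$-side, leaving $\|\mathcal F^{-1}[\Psi(|\cdot|^2)]\|_{L^1_y}$, which is a finite constant independent of $\delta$ and $\xi$ since $\Psi(|\cdot|^2)$ is a Schwartz (indeed compactly supported) function. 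Then I pass to the torus by periodizing: if $\varphi=\mathcal F^{-1}[\Psi(|\cdot|^2)]$ and $\varphi^{\mathrm{per}}(x)=\sum_{l\in\mz^N}\varphi(x+l)$, then $\|\varphi^{\mathrm{per}}\|_{L^1(\mt)}\le\|\varphi\|_{L^1(\R^N)}$ and, by the Poisson summation computation, $\mathcal F_{\mt}\varphi^{\mathrm{per}}(n)=\Psi(\tfrac{n^*A(\xi)n}{\delta})$, so this is an $L^1(\mt)$-multiplier with norm bounded uniformly in $\delta,\xi$.

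\textbf{Step 3: degenerate quadratic part, and the linear part.} When $A(\xi)$ is degenerate, after an invertible linear transformation (which leaves the multiplier norm invariant, as seen in Step 2) I may assume $A(\xi)$ is a diagonal projection onto the first $K$ coordinates; then $\Psi(\tfrac{m_2}{\delta})$ factors as $\Psi(\tfrac{|z^K|^2}{\delta})$ tensored with the constant $1$ in the remaining $N-K$ variables, and the same dilation argument gives $\|\mathcal F^{-1}\Psi(\tfrac{|z^K|^2}{\delta})\|_{L^1}=\|\mathcal F_{y^K}^{-1}\Psi(|\cdot|^2)\|_{L^1_{y^K}}$, again a finite $\delta$-independent constant, with the discrete/periodic version following as in Step 2. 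The case $A(\xi)=0$ is trivial ($\Psi(0)$ times the identity). Finally, the linear part $m_1(n,\xi)=b(\xi)\cdot n$ is handled by the same scaling reasoning (it is a one-dimensional affine symbol after restricting to the line through $b(\xi)$), again yielding a uniform bound. Combining Steps 1--3 via the product rule for multipliers gives the claim.

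\textbf{Main obstacle.} The only genuinely delicate point is the uniformity in $\xi$: one must make sure that after the scaling the residual constant truly no longer sees $A(\xi)$ or $b(\xi)$ at all, which is why the exact cancellation of the Jacobian factor $\delta^{N/2}/|\det\sigma(\xi)|$ against the change of variables on the physical side is essential, and why the argument must be organized so that invertible linear changes of frequency (not just dilations) are available in the degenerate case. Everything else is bookkeeping: the Fourier-series splitting in Step 1 and the Poisson summation transfer to the torus are standard, and the absolute convergence of the series there is guaranteed by the rapid decay of $\hat\psi$ against the polynomial growth of the $C^\ell$-norms of the modulated bumps.
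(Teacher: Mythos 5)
Your proposal is correct and follows essentially the same route as the paper: Fourier-series decomposition of $\psi$ into a rapidly convergent sum of modulated bump functions, reduction to the separate real multipliers $m_1$ and $m_2$, scaling out $\delta$ and $\sigma(\xi)$ by a change of variables (with the Jacobian factor exactly cancelling), and Poisson summation to transfer from $\R^N$ to $\T^N$; the degenerate and linear cases are handled by the same reduction in both.
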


\begin{cor}\label{cor:truncation}
Let $m(u,n,\xi)=i(u+b(\xi)\cdot n)+n^*A(\xi)n$. Then $m$ satisfies the truncation property in $L^1_{t,x}$ uniformly in $\xi\in\mr$, i.e. for every bump function $\psi$ and $\delta>0$ it holds true
\begin{align*}
\bigg\|\mathcal{F}_{tx}^{-1}\psi\bigg(\frac{m(u,n,\xi)}{\delta}\bigg)\mathcal{F}_{tx} f(u,n)\bigg\|_{L^1_{t,x}} &\leq C\|f\|_{L^1_{t,x}},
\end{align*}
where the constant $C$ is independent of $\delta$ and $\xi$.
\end{cor}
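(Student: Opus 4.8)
The plan is to mimic Step~1 of the proof of Lemma~\ref{lemma:truncation}, now with one extra (continuous) frequency variable, and to reduce the whole statement to Lemma~\ref{lemma:truncation} together with the elementary fact that translations are isometries of $L^1(\R\times\T)$. Write $m(u,n,\xi)=i\,\mathbf{m}_1(u,n,\xi)+\mathbf{m}_2(n,\xi)$ with $\mathbf{m}_1(u,n,\xi):=u+b(\xi)\cdot n$ and $\mathbf{m}_2(n,\xi):=n^*A(\xi)n\ge 0$, both real-valued; note that $\mathbf{m}_1$ is an \emph{affine} function of the joint frequency $(u,n)$, while $\mathbf{m}_2$ depends only on $(n,\xi)$. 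Fix a bump function $\tilde\psi$ equal to $1$ on a ball containing the two one-dimensional projections of $\operatorname{supp}\psi$; since $\psi(\cdot/\delta)$ is supported where both $\mathbf{m}_1/\delta$ and $\mathbf{m}_2/\delta$ are bounded by the radius of that ball, we have $\psi(m/\delta)=\psi(m/\delta)\,\tilde\psi(\mathbf{m}_1/\delta)\,\tilde\psi(\mathbf{m}_2/\delta)$. Expanding the first factor in a rapidly converging Fourier series exactly as in Lemma~\ref{lemma:truncation} (the double cut-off taking care of the periodic-extension bookkeeping) and absorbing the cut-offs into the exponentials then yields
\[
\psi\!\left(\frac{m(u,n,\xi)}{\delta}\right)
=\sum_{j,k\in\mz}\hat\psi(j,k)\,\Psi_k\!\left(\frac{u+b(\xi)\cdot n}{\delta}\right)\,\Phi_j\!\left(\frac{n^*A(\xi)n}{\delta}\right),
\]
where $\hat\psi(j,k)$ decays rapidly in $(j,k)$ and $\Psi_k,\Phi_j$ are bump functions obtained from $\tilde\psi$ by multiplication with a unimodular character, so that the $L^1$-norms of their (inverse) Fourier transforms, and finitely many of their Schwartz seminorms, grow at most polynomially in $k$ and $j$.

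Next I would estimate each factor in the sum as an $L^1_{t,x}$-Fourier multiplier, uniformly in $\delta$ and $\xi$. The symbol $\Phi_j(n^*A(\xi)n/\delta)$ depends only on $n$; since $\Phi_j$ is a bump function, Steps~2--3 of the proof of Lemma~\ref{lemma:truncation} apply verbatim and show that it is an $L^1(\T)$-Fourier multiplier with norm $\lesssim(1+|j|)^{\ell}$ for some $\ell$ independent of $\delta,\xi$, hence an $L^1_{t,x}$-multiplier with the same norm (apply it in $x$ for a.e.\ $t$ and use Fubini). For the other factor I would use that $\Psi_k$ is Schwartz, writing $\Psi_k(y)=c\int\widehat{\Psi_k}(\theta)\,e^{i\theta y}\,d\theta$, whence
\[
\Psi_k\!\left(\frac{u+b(\xi)\cdot n}{\delta}\right)
=c\int\widehat{\Psi_k}(\theta)\,e^{i\theta u/\delta}\,e^{i\theta b(\xi)\cdot n/\delta}\,d\theta;
\]
each symbol $e^{i\theta u/\delta}e^{i\theta b(\xi)\cdot n/\delta}$ is the symbol, in the $(t,x)$-Fourier pair, of a simultaneous translation of the time variable by $\mp\theta/\delta$ and of the space variable by $\mp\theta b(\xi)/\delta$, which is an isometry of $L^1(\R\times\T)$. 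By Minkowski's integral inequality the symbol $\Psi_k((u+b(\xi)\cdot n)/\delta)$ is therefore an $L^1_{t,x}$-multiplier with norm $\le c\|\widehat{\Psi_k}\|_{L^1}=c\|\widehat{\tilde\psi}\|_{L^1}$, uniformly in $\delta,\xi,k$. Using that a product of $L^1$-multipliers is an $L^1$-multiplier whose norm is the product of the norms (as recalled in the proof of Lemma~\ref{lemma:truncation}), each summand has $L^1_{t,x}$-multiplier norm $\lesssim(1+|j|)^{\ell}$, and summing against the rapidly decaying coefficients gives the bound $\sum_{j,k}|\hat\psi(j,k)|(1+|j|)^{\ell}<\infty$, which is independent of $\delta$ and $\xi$. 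This is the asserted estimate.

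The new feature relative to Lemma~\ref{lemma:truncation} is precisely the presence of the continuous time variable $u$, and this is where the only real (though mild) obstacle sits: one must handle the composite first-order term $u+b(\xi)\cdot n$. The key observation is that, unlike the quadratic term $n^*A(\xi)n$ — for which the Poisson-summation/rescaling argument of Lemma~\ref{lemma:truncation} is genuinely needed — the affine symbol $u+b(\xi)\cdot n$ only ever generates honest translations of $\R\times\T$, whose $L^1$-operator norm is $1$ regardless of $\delta$, $\xi$ and the translation amount; so this factor needs no cut-off analysis beyond the bump function already present, and the $\delta$-scaling does not degrade the bound. Everything else is a bookkeeping repetition of Step~1 of Lemma~\ref{lemma:truncation}.
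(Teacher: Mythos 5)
Your proof is correct, and in fact more careful than the paper's own write-up. The paper's decomposition \eqref{eq:mult} inserts \emph{separate} cutoffs $\tilde\psi_j(u/\delta)\,\tilde\psi_j(b(\xi)\cdot n/\delta)$ on the two pieces of $\mathrm{Im}\,m$, but the support of $\psi(m/\delta)$ constrains only the sum $(u+b(\xi)\cdot n)/\delta$ (and $n^*A(\xi)n/\delta$), not $u/\delta$ and $b(\xi)\cdot n/\delta$ individually; e.g.\ at a frequency point with $u=-b(\xi)\cdot n$ both large the left side of \eqref{eq:mult} may be nonzero while the right side vanishes, so \eqref{eq:mult} is not a pointwise identity. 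You sidestep this by keeping the whole affine term $\mathbf m_1=u+b(\xi)\cdot n$ inside a single cutoff $\Psi_k(\mathbf m_1/\delta)$, which is the correct bookkeeping.

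What your route buys: writing $\Psi_k(\mathbf m_1/\delta)=c\int\widehat{\Psi_k}(\theta)\,e^{i\theta u/\delta}e^{i\theta b(\xi)\cdot n/\delta}\,\dd\theta$ exhibits the factor as a superposition of characters, each the symbol of a simultaneous translation of $(t,x)\in\R\times\T^N$, hence an $L^1$-isometry with operator norm $1$ independently of $\delta$, $\xi$ and $\theta$; Minkowski then gives the bound $c\|\widehat{\tilde\psi}\|_{L^1}$ uniformly in $\delta$, $\xi$ and $k$. This cleanly isolates \emph{why} the first-order affine symbol is harmless (it produces only translations, and no Jacobian appears), whereas the quadratic symbol $n^*A(\xi)n$ genuinely needs the rescaling/Poisson-summation analysis of Steps 2--3 of Lemma \ref{lemma:truncation} to see the $\delta^{N/2}/|\det\sigma(\xi)|$ cancellation. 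Equivalently one could conjugate by the Galilean boost $(t,x)\mapsto(t,x+t\,b(\xi))$ to reduce the symbol to $u$ and then reuse the paper's direct computation for $\tilde\psi_k(u/\delta)$; your superposition argument packages the same observation without a change of variables. Either way, the result and the claimed uniformity are fully justified.
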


\begin{proof}
Let $\tilde\psi$ be a bump function that equals to 1 in a ball containing the one-dimensional projections of the support of $\psi$. We proceed similarly as in Lemma \ref{lemma:truncation} and write
\begin{align}\label{eq:mult}
\psi\bigg(\frac{m(u,n,\xi)}{\delta}\bigg)
&=\sum_{j,k\in\mz}\hat\psi(j,k)\tilde\psi_j\bigg(\frac{u}{\delta}\bigg)\tilde\psi_j\bigg(\frac{m_1(n,\xi)}{\delta}\bigg)\tilde\psi_{k}\bigg(\frac{m_2(n,\xi)}{\delta}\bigg),
\end{align}
where $\tilde\psi_j(x):=\me^{2\pi ijx}\tilde\psi(x)$. Due to Lemma \ref{lemma:truncation}
\begin{align*}
&\bigg\|\mathcal{F}_{tx}^{-1}\tilde\psi_j\bigg(\frac{m_1(n,\xi)}{\delta}\bigg)\mathcal{F}_{tx}f(u,n)\bigg\|_{L^1_{t,x}}=\bigg\|\mathcal{F}_{x}^{-1}\tilde\psi_j\bigg(\frac{m_1(n,\xi)}{\delta}\bigg)\mathcal{F}_{x}f(t,n)\bigg\|_{L^1_{t,x}}
\leq C(j)\|f\|_{L^1_{t,x}},
\end{align*}
where the dependence on $j$ is at most polynomial. Similar estimate holds true for the multiplier
$$\tilde\psi_k\bigg(\frac{m_2(n,\xi)}{\delta}\bigg)$$
and for the remaining one we have
\begin{align*}
\bigg\|\mathcal{F}_{t}^{-1}\tilde\psi_j\bigg(\frac{u}{\delta}\bigg)\bigg\|_{L^1_{t}}&=\frac{1}{(2\pi)^{1/2}}\int_\mr \bigg|\int_\mr\tilde\psi_j\bigg(\frac{u}{\delta}\bigg)\me^{ i ut}\,\dd u\bigg|\,\dd t=\frac{\delta }{(2\pi)^{1/2}}\int_\mr \bigg|\int_\mr\tilde\psi_j(u)\me^{ i \delta ut}\,\dd u\bigg|\,\dd t\\
&=\delta\int_\mr\big|\mathcal{F}_t^{-1}\tilde\psi_j(\delta t)\big|\dd t=\int_\mr\big|\mathcal{F}_t^{-1}\tilde\psi_j( t)\big|\dd t
\end{align*}
so we deduce that
$$\tilde\psi_j\bigg(\frac{u}{\delta}\bigg)$$
is an $L^1$-multiplier on $\mr$. Therefore
\begin{align*}
\bigg\|\mathcal{F}_{tx}^{-1}\tilde\psi_j\bigg(\frac{u}{\delta}\bigg)\mathcal{F}_{tx}f(u,n)\bigg\|_{L^1_{t,x}}=\bigg\|\mathcal{F}_{t}^{-1}\tilde\psi_j\bigg(\frac{u}{\delta}\bigg)\mathcal{F}_{t}f(u,x)\bigg\|_{L^1_{t,x}}\leq C(j)\|f\|_{L^1_{t,x}},
\end{align*}
which completes the proof.
\end{proof}

\begin{lemma}[Multiplier Lemma]\label{lemma:multiplier}

Let $\psi$ be a bump function and let $m(u,n,\xi)$ satisfy the truncation property in $L^1_{t,x}$ uniformly in $
\xi\in\mr$. For each $\xi\in\mr$ and $\delta>0$ let $\Omega(u,n;\delta)\subset\mr$ be the velocity set
$$\Omega(u,n;\delta):=\bigg\{\xi\in\mr;\,\frac{m(u,n,\xi)}{\delta}\in\supp\psi\bigg\}.$$
Consider the velocity-averaged multiplier operator
\[
\overline{M_{\psi}f}(t,x):=\int_{\mr}M_{\psi}f(t,x,\xi)\,\dif\xi=\int_{\mr}\mathcal{F}_{tx}^{-1}\psi\bigg(\frac{m(u,n,\xi)}{\delta}\bigg)\mathcal{F}_{tx} f(u,n,\xi)\,\dif\xi,
\]
then for every $p\in[1,2]$ we have the estimate
$$\|\overline{M_{\psi}f}\|_{L^{p}_{t,x}}\le C\sup_{u,n}|\Omega(u,n;\delta)|^{1/p'}\|f\|_{L^{p}_{t,x,\xi}}.$$

\begin{proof}
Let $p=2$. Then due to Plancherel's theorem, H\"older's inequality and the fact that $\psi$ is compactly supported, we obtain
\begin{align*}
\|\overline{M_{\psi}f}\|_{L^{2}_{t,x}}^2&= \bigg\|\int_\mr\psi\bigg(\frac{m(u,n,\xi)}{\delta}\bigg)\mathcal{F}_{tx} f(u,n,\xi)\,\dif\xi\bigg\|_{L^2_{u,n}}^2\\
&\leq \sum_{n\in\mz^N}\int_{\mr}\int_\mr\bigg|\psi\bigg(\frac{m(u,n,\xi)}{\delta}\bigg)\bigg|^2\,\dif\xi\int_\mr |\mathcal{F}_{tx}  f(u,n,\xi)|^2\,\dif \xi\,\dd u\\
&\le C\sup_{u,n}|\Omega(u,n;\delta)|\|f\|^2_{L^{2}_{t,x,\xi}}.
\end{align*}

Let $p=1$. Then we observe that due to the truncation property
\begin{align*}
\|\overline{M_{\psi}f}\|_{L^1_{t,x}}&\le\int_{\mr}\bigg\|\mathcal{F}_{t,x}^{-1}\psi\bigg(\frac{m(u,n,\xi)}{\delta}\bigg)\mathcal{F}_{tx} f(u,n,\xi)\bigg\|_{L^1_{t,x}}\,\dif\xi\leq C\int_\mr\|f(\xi)\|_{L^1_{t,x}}\,\dif \xi=C\|f\|_{L^1_{t,x,\xi}}.
\end{align*}

Let $p\in(1,2).$ Then by Riesz-Thorin interpolation result we get for $\theta=2\frac{p-1}{p}$ that
\begin{align*}
\|\overline{M_{\psi}}\|_{L^p_{t,x}\rightarrow L^p_{t,x}}&\le \|\overline{M_{\psi}}\|_{L^1_{t,x}\rightarrow L^1_{t,x}}^{1-\theta}\|\overline{M_{\psi}}\|_{L^2_{t,x}\rightarrow L^2_{t,x}}^\theta\leq C \sup_{u,n}|\Omega(u,n;\delta)|^{\theta/2}=C \sup_{u,n}|\Omega(u,n;\delta)|^{1/p'}
\end{align*}
and the proof is complete.
\end{proof}
\end{lemma}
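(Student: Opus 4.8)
\textbf{Proof proposal for the Multiplier Lemma (Lemma \ref{lemma:multiplier}).}

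The plan is to establish the asserted bound at the two endpoints $p=2$ and $p=1$ by entirely different means, and then to obtain the intermediate range $p\in(1,2)$ by Riesz--Thorin interpolation. After a preliminary density reduction (it suffices to argue for $f$ that is, say, Schwartz in $(t,x)$ with compact support in $\xi$, and then pass to general $f\in L^p_{t,x,\xi}$ by approximation, all constants being uniform), I would first handle $p=2$. By Plancherel in $(t,x)$, we have $\|\overline{M_\psi f}\|_{L^2_{t,x}}^2=\big\|\int_{\mr}\psi(m(u,n,\xi)/\delta)\,\mathcal{F}_{tx}f(u,n,\xi)\,\dif\xi\big\|_{L^2_{u,n}}^2$. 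For each fixed $(u,n)$ I would apply the Cauchy--Schwarz inequality in $\xi$, using that $\psi$ is a bump function: the factor $\psi(m(u,n,\xi)/\delta)$ is bounded by $\|\psi\|_\infty$ and vanishes unless $\xi\in\Omega(u,n;\delta)$, so $\int_{\mr}|\psi(m(u,n,\xi)/\delta)|^2\dif\xi\le\|\psi\|_\infty^2\,|\Omega(u,n;\delta)|\le\|\psi\|_\infty^2\sup_{u,n}|\Omega(u,n;\delta)|$. Summing and integrating the remaining factor $\int_{\mr}|\mathcal{F}_{tx}f(u,n,\xi)|^2\dif\xi$ in $(u,n)$ and using Plancherel again returns $\|f\|_{L^2_{t,x,\xi}}^2$, which yields the claim with $1/p'=1/2$.

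For $p=1$ Plancherel is unavailable, so the argument must instead exploit the boundedness of the multiplier $\psi(m/\delta)$ on $L^1$. I would move the $\xi$-integration outside by Minkowski's integral inequality, $\|\overline{M_\psi f}\|_{L^1_{t,x}}\le\int_{\mr}\|M_\psi f(\cdot,\cdot,\xi)\|_{L^1_{t,x}}\dif\xi$, and then invoke the truncation property assumed in the hypotheses (which is precisely the content of Corollary \ref{cor:truncation}, itself built on Lemma \ref{lemma:truncation}) to get, for each fixed $\xi$, $\|M_\psi f(\cdot,\cdot,\xi)\|_{L^1_{t,x}}=\big\|\mathcal{F}_{tx}^{-1}\psi(m(u,n,\xi)/\delta)\,\mathcal{F}_{tx}f(\cdot,\cdot,\xi)\big\|_{L^1_{t,x}}\le C\|f(\cdot,\cdot,\xi)\|_{L^1_{t,x}}$ with $C$ independent of $\xi$ and $\delta$. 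Integrating in $\xi$ gives $\|\overline{M_\psi f}\|_{L^1_{t,x}}\le C\|f\|_{L^1_{t,x,\xi}}$, i.e. the claim with the trivial power $|\Omega|^0$ of the measure factor.

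Finally, for $p\in(1,2)$ I would view $\overline{M_\psi}$ as a single linear operator acting from $L^p_{t,x,\xi}$ to $L^p_{t,x}$ and interpolate the $(1,1)$ and $(2,2)$ bounds by the Riesz--Thorin theorem with $\theta=2\tfrac{p-1}{p}$, so that $\tfrac1p=(1-\theta)\cdot 1+\theta\cdot\tfrac12$; the operator norm is then controlled by $C\big(\sup_{u,n}|\Omega(u,n;\delta)|\big)^{\theta/2}$, and since $\theta/2=\tfrac{p-1}{p}=\tfrac1{p'}$ this is exactly the asserted estimate.

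\textbf{Main obstacle.} The soft parts of the argument (the $L^2$ endpoint and the interpolation) are routine; all the real content sits in the $L^1$ endpoint, which reduces to the assertion that $\psi(m(u,n,\cdot)/\delta)$ is an $L^1_{t,x}$ Fourier multiplier with operator norm bounded \emph{uniformly} in $\delta$ and $\xi$. This is not a soft fact: one has to exploit the quadratic/Gaussian structure of the symbol $m_2(n,\xi)=n^*A(\xi)n$ and the linear structure of $m_1(n,\xi)=b(\xi)\cdot n$, rescale to normalize $\delta$ away, identify the inverse transform with a Schwartz function, and use the Poisson summation formula to transfer the uniform bound from $\mr^N$ to $\mt$ (and to handle degenerate $A(\xi)$ by reducing to the non-degenerate case on the range of $A(\xi)$). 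In the proof of Lemma \ref{lemma:multiplier} itself I would simply quote Corollary \ref{cor:truncation} for this; the genuine difficulty therefore lies one level below, in establishing the truncation property of $m$.
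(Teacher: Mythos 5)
Your proposal matches the paper's proof essentially step for step: Plancherel plus Cauchy--Schwarz (Hölder) in $\xi$ for the $L^2$ endpoint, Minkowski and the assumed truncation property for the $L^1$ endpoint, and Riesz--Thorin with $\theta=2\tfrac{p-1}{p}$ for the intermediate range. Your closing remark that the entire nontrivial content lives in the uniform $L^1$ multiplier bound (Lemma \ref{lemma:truncation} and Corollary \ref{cor:truncation}) is exactly where the paper places it as well.
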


\begin{cor}\label{cor:mult}
Let $m(u,n,\xi)=i(u+b(\xi)\cdot n)+n^*A(\xi)n$. Then for every bump function $\psi$ and every pair $(\epsilon,q_\epsilon)$ satisfying
$$
\frac{N}{q'_\epsilon}<\epsilon<1<q_\epsilon<\frac{N}{N-\epsilon},
$$
where $q'_\epsilon$ is the conjugate exponent to $q_\epsilon$,
it holds true that
$$\|\overline{M_{\psi}f}\|_{L^{1}_{t}W^{-\epsilon,q_\epsilon}_x}\le C \|f\|_{\mathcal{M}_{t,x,\xi}}.$$
\end{cor}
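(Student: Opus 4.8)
The plan is to deduce this from the truncation property of Corollary \ref{cor:truncation} by duality, the loss of $\epsilon$ spatial derivatives being precisely the amount of regularity that allows a measure to be fed into an operator bounded only on $L^1_{t,x}$.

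Write $T_\xi:=\mathcal{F}_{tx}^{-1}\psi\big(m(u,n,\xi)/\delta\big)\mathcal{F}_{tx}$ and let $T_\xi^{*}$ be its $L^2$-adjoint, that is, the spacetime Fourier multiplier with the conjugate symbol. For $f\in\mathcal{M}_{t,x,\xi}$ the averaged operator $\overline{M_{\psi}f}$ is the distribution on $[0,T]\times\mt$ given by
\[
\big\langle \overline{M_{\psi}f},\varphi\big\rangle:=\int_{[0,T]\times\mt\times\mr}\big(T_\xi^{*}\varphi\big)(t,x)\,\dif f(t,x,\xi),\qquad \varphi\in C^\infty\big([0,T]\times\mt\big),
\]
which coincides with the naive definition when $f$ is a function. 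Since $1<q_\epsilon<\infty$, one has $W^{-\epsilon,q_\epsilon}(\T)=\big(W^{\epsilon,q'_\epsilon}(\T)\big)^{*}$, hence $\|\overline{M_{\psi}f}\|_{L^1_tW^{-\epsilon,q_\epsilon}_x}$ equals the supremum of $\langle\overline{M_{\psi}f},\varphi\rangle$ over $\varphi$ with $\|\varphi\|_{L^\infty_tW^{\epsilon,q'_\epsilon}_x}\le1$, and it is enough to bound this pairing. Estimating it trivially,
\[
\big|\langle\overline{M_{\psi}f},\varphi\rangle\big|\le\Big(\sup_{\xi\in\mr}\|T_\xi^{*}\varphi\|_{L^\infty_{t,x}}\Big)\,\|f\|_{\mathcal{M}_{t,x,\xi}}.
\]
Corollary \ref{cor:truncation} gives $\|T_\xi g\|_{L^1_{t,x}}\lesssim\|g\|_{L^1_{t,x}}$ with a constant independent of $\delta$ and $\xi$; dualizing this $L^1_{t,x}$-bound shows that $T_\xi^{*}$ is bounded on $L^\infty_{t,x}$ with the same uniform norm, so $\|T_\xi^{*}\varphi\|_{L^\infty_{t,x}}\lesssim\|\varphi\|_{L^\infty_{t,x}}$ uniformly in $\xi$ and $\delta$. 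Finally, the hypotheses on $(\epsilon,q_\epsilon)$ are exactly the condition ensuring the Sobolev embedding $W^{\epsilon,q'_\epsilon}(\T)\hookrightarrow L^\infty(\T)$ (equivalently $\mathcal{M}(\T)\hookrightarrow W^{-\epsilon,q_\epsilon}(\T)$), whence $\|\varphi\|_{L^\infty_{t,x}}\lesssim\|\varphi\|_{L^\infty_tW^{\epsilon,q'_\epsilon}_x}$. Chaining the last three estimates yields $|\langle\overline{M_{\psi}f},\varphi\rangle|\lesssim\|\varphi\|_{L^\infty_tW^{\epsilon,q'_\epsilon}_x}\|f\|_{\mathcal{M}_{t,x,\xi}}$, which is the claim.

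The one point that requires a little care is confirming that $\overline{M_{\psi}f}$ is genuinely an element of $L^1_tW^{-\epsilon,q_\epsilon}_x$ and not merely a functional bounded on $L^\infty_tW^{\epsilon,q'_\epsilon}_x$; here the needed $L^1_t$-regularity is available precisely because $\psi$ is compactly supported, so that the symbol $\psi(m(u,n,\xi)/\delta)$ is, for each fixed $(n,\xi)$, a bounded compactly supported function of the time-frequency $u$. I would make this rigorous by first running the estimate above for $f$ replaced by a mollification $f_n$ in all variables — for which $\overline{M_{\psi}f_n}$ is a smooth function and the right-hand side of the trivial bound is $\le C\|f_n\|_{L^1_{t,x,\xi}}\le C\|f\|_{\mathcal{M}_{t,x,\xi}}$ uniformly in $n$ — and then passing to the limit, using that $T_\xi^{*}\varphi$ is bounded and continuous (so the pairings converge) together with lower semicontinuity of the norm. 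Beyond this bookkeeping I do not anticipate any genuine obstacle: the statement is a routine duality-plus-Sobolev repackaging of Corollary \ref{cor:truncation}.
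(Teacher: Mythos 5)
The duality step and the Sobolev embedding are sound, and the bound $|\langle\overline{M_\psi f},\varphi\rangle|\le(\sup_\xi\|T_\xi^*\varphi\|_{L^\infty_{t,x}})\|f\|_{\mathcal{M}_{t,x,\xi}}\lesssim\|\varphi\|_{L^\infty_tW^{\epsilon,q'_\epsilon}_x}\|f\|_{\mathcal{M}_{t,x,\xi}}$ is correct. But this establishes only that $\overline{M_\psi f}$ is a bounded functional on $L^\infty_tW^{\epsilon,q'_\epsilon}_x$, i.e.\ a $W^{-\epsilon,q_\epsilon}_x$-valued \emph{measure} in time, and you correctly flag that the real issue is upgrading this to $L^1_t$. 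That is precisely where your proposed remedy fails: mollify $f$ to $f_n$, get $\overline{M_\psi f_n}$ uniformly bounded in $L^1_tW^{-\epsilon,q_\epsilon}_x$, pass to the limit, invoke ``lower semicontinuity.'' This cannot work as stated, because $L^1_t$ is not weakly sequentially closed and its unit ball is not weak* compact; a bounded sequence in $L^1_tX$ can perfectly well concentrate and converge weak* to a genuine measure in $t$ (a Dirac in time, say), which is exactly the object you are trying to rule out.

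The observation that the symbol is compactly supported in the time-frequency $u$ points in the right direction, but as you note it holds only ``for each fixed $(n,\xi)$'' — the support window $|u+b(\xi)\cdot n|\le 2\delta$ slides with $(n,\xi)$, so there is no uniform band-limitation you could invoke directly. What is actually needed, and what the paper does, is to separate the variables inside the symbol. Using the Fourier-series decomposition of $\psi$ already set up in Corollary \ref{cor:truncation},
\[
\psi\Big(\tfrac{m(u,n,\xi)}{\delta}\Big)=\sum_{j,k}\hat\psi(j,k)\,\tilde\psi_j\Big(\tfrac{u}{\delta}\Big)\,\tilde\psi_j\Big(\tfrac{m_1(n,\xi)}{\delta}\Big)\,\tilde\psi_k\Big(\tfrac{m_2(n,\xi)}{\delta}\Big),
\]
one isolates a factor $\tilde\psi_j(u/\delta)$ that depends on $u$ alone. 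Its inverse Fourier transform in $t$ is an $L^1_t$ (indeed Schwartz) kernel, so that piece of the operator maps measures in time to $L^1_t$ functions; the remaining factors are spatial multipliers given by convolution with finite Borel measures on $\T$, and the convolution of an $L^1$-in-time object with a spatial measure stays absolutely continuous in time. Combined with the rapid decay of $\hat\psi(j,k)$ and the polynomially growing multiplier norms of $\tilde\psi_j,\tilde\psi_k$, this yields absolute continuity in $t$ of $\overline{M_\psi f}$, which is the missing step. Without that decomposition, your argument stops at ``$\overline{M_\psi f}$ is a time-measure with values in $\mathcal{M}(\T)\hookrightarrow W^{-\epsilon,q_\epsilon}_x$'' and does not reach the stated conclusion.
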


\begin{proof}
Let $(\varrho_\varepsilon)$ be an approximation to the identity on $\R_t\times\mathbb{T}^N_x\times\R_\xi$. If $f\in\mathcal{M}(\R\times\T\times\R)$ then $f^\varepsilon=f*\varrho_\varepsilon$ satisfies
$$
\|f^\varepsilon\|_{ L^1_{t,x,\xi}}\leq \|f\|_{\mathcal{M}_{t,x,\xi}},
$$
hence Lemma \ref{lemma:multiplier} yields
\begin{align}\label{eq:meas123}
\|\overline{M_\psi f^\varepsilon}\|_{L^1_{t,x}} &\leq C\|f^\varepsilon\|_{L^1_{t,x,\xi}}\leq C\|f\|_{\mathcal{M}_{t,x,\xi}},
\end{align}
where the constant $C$ is independent of $\delta$, $\xi$ and $\varepsilon$.
Besides, due to the Banach-Alaoglu theorem, the sequence $(f^\varepsilon)$ converges weak* in $\mathcal{M}(\R\times\T\times\R)$ and the limit is necessarily $f$. Consequently, we may apply lower semicontinuity to the left hand side \eqref{eq:meas123} to deduce
$$
\|\overline{M_\psi f}\|_{\mathcal{M}_{t,x}} \leq C\|f\|_{\mathcal{M}_{t,x,\xi}}.
$$

As the next step, we will prove that the measure $\overline{M_\psi f}$
is absolutely continuous with respect to the Lebesgue measure in time. Indeed,
using the decomposition \eqref{eq:mult}, it was shown in Corollary \ref{cor:truncation} that the multiplier operator
$$
f\mapsto \mathcal{F}^{-1}_{t,x} \psi\left(\frac{m(u,n,\xi)}{\delta}\right)\mathcal{F}_{t,x} f(u,n)
$$
rewrites as a (weighted) sum of multiplier operators of the form
\begin{align*}
f&\mapsto \mathcal{F}^{-1}_{t,x} \tilde\psi_j\left(\frac{u}{\delta}\right)\tilde\psi_j\left(\frac{m_1(n,\xi)}{\delta}\right)\tilde\psi_k\left(\frac{m_2(n,\xi)}{\delta}\right)\mathcal{F}_{t,x} f(u,n)\\
&\qquad=\left[\mathcal{F}^{-1}_{t,x} \tilde\psi_j\left(\frac{u}{\delta}\right)\right]*\left[\mathcal{F}^{-1}_{t,x}\tilde\psi_j\left(\frac{m_1(n,\xi)}{\delta}\right)\tilde\psi_k\left(\frac{m_2(n,\xi)}{\delta}\right)\mathcal{F}_{t,x} f(u,n)\right].
\end{align*}
Now, it is enough to observe (cf. Lemma \ref{lemma:truncation}) that the measure
$$
\mathcal{F}^{-1}_{t,x} \tilde\psi_j\left(\frac{u}{\delta}\right)
$$
is absolutely continuous with respect to the Lebesgue measure in time, i.e. in time is is an $L^1$ function.
Due to possible degeneracies in the remaining multipliers given by
we cannot argue similarly in the space variable. Indeed, the diffusion matrix $A$ can only have partial rank and still satisfy the nondegeneracy assumption \eqref{eq:non-deg-local}, cf. \cite[Corollary 4.2]{tadmor}. To be more precise, with respect to the space variable, the corresponding multiplier operators are given as convolutions with finite Borel measures and not $L^1$-functions.
The claim now follows since for two measures $\mu,\nu\in\mathcal{M}(\R\times\T)$, where $\mu$ is absolutely continuous with respect to the Lebesgue measure in time, the convolution $\mu*\nu$ is also absolutely continuous with respect to the Lebesgue measure in time. Indeed, we obtain that
$$
\mathcal{F}^{-1}_{t,x} \psi\left(\frac{m(u,n,\xi)}{\delta}\right)\mathcal{F}_{t,x} f(u,n)
$$
and hence $\overline{M_\psi f}$ is absolutely continuous with respect to the Lebesgue measure in time.

Finally, due to the Sobolev embedding the space of finite Borel measures on $\T$ is embedded into $W^{-\epsilon,q_\epsilon}
(\T)$ for all $(\epsilon,q_\epsilon)$ satisfying
$$
\frac{N}{q'_\epsilon}<\epsilon<1<q_\epsilon<\frac{N}{N-\epsilon},
$$
where $q'_\epsilon$ is the conjugate exponent to $q_\epsilon$.
The proof is complete.
\end{proof}

\bibliographystyle{abbrv.bst}

\end{document}